\numberwithin{equation}{section}
\numberwithin{equation}{section}
\newtheorem{theorem}{Theorem}
\numberwithin{theorem}{subsection} 
\newtheorem{lemma}[theorem]{Lemma}
\newtheorem{coro}[theorem]{Corollary}
\theoremstyle{definition}
\newtheorem{defn}[theorem]{Definition}
\newtheorem{assump}{\textbf{Assumption}}
\theoremstyle{remark}
\newtheorem*{remark}{Remark}
\newcommand{\lemref}[1]{Lemma~\ref{#1}}
\newcommand{\Aaa}{\|A\|_{2\alpha}}
\newcommand{\Aaaa}{\| \tilde{A}\|_{2\alpha}}
\newcommand{\dbar}{\bar{d}}
\newcommand{\EEb}{\mathbb{E}_{\mu(\omega)}}
\newcommand{\abs}[1]{\lvert#1\rvert}
\begin{document}

\begin{frontmatter}

\title{Unbiased Sampling of Multidimensional Partial Differential Equations with Random Coefficients}
\runtitle{Unbiased Sampling of for random PDEs }
\runauthor{J. Blanchet, F. Li AND X. Li}
 \author{\fnms{Jose} \snm{Blanchet}\ead[label=c1]{jblanche@stanford.edu},\thanksref{t1}}
\thankstext{t1}{Support from NSF Grant DMS-1720451, DMS-1838576 and CMMI-1538217 is gratefully acknowledged by Jose Blanchet. }
\thankstext{t1}{Department of Management Science and Engineering, Stanford University, \printead{c1} } 
\author{\fnms{Fengpei} \snm{Li}\ead[label=d1]{fl2412@columbia.edu}\thanksref{t2}}
\thankstext{t2}{Department of Industrial Engineering \& Operations Research, Columbia University, \printead{d1} }
\and
\author{\fnms{Xiaoou} \snm{Li}\ead[label=e1]{lixx1766@ umn.edu}\thanksref{t3}}
\thankstext{t3}{Support from NSF Grant DMS-1712657 is gratefully acknowledged by Xiaoou Li.}
\thankstext{t3}{School of Statistics, University of Minnesota,  \printead{e1} }
\affiliation{ Columbia University,\thanksref{t1}\thanksref{t2} Stanford University\thanksref{t1} and University of Minnesota\thanksref{t3}}

\begin{abstract} [{ }]
	\quad An unbiased estimator with finite variance and computational cost for function values  of solutions from partial differential equations governed by random coefficients is constructed. We show the proposed estimator bypass the curse of dimensionality and can be applied in various disciplines. For the error analysis, we analyse the random partial differential equations by its connection with stochastic differential equations and rough path estimation. 
\end{abstract}
\begin{keyword}[class=MSC]
	\kwd[Primary ]{35R60}
	\kwd[; secondary ]{65C05,82B80}
\end{keyword}

\begin{keyword}
\kwd{Unbiased Sampling}
\kwd{Monte Carlo method}
\kwd{PDE with random coefficients}
\kwd{Rough Path}
\end{keyword}

\end{frontmatter}

\section{Introduction}
\subsection{Motivation and background}
Consider the solution $u(x,t):\mathbb{R}^{d}\times \mathbb{R}^{+}\rightarrow 
\mathbb{R}$ of  the  following random parabolic partial differential equation(PDE): 
\begin{equation}
\begin{cases}
\partial_{t}u(x,t)=\boldsymbol{\mu}^{T} (x ) D_{x}u(x,t)+\frac{%
	1}{2} \cdot trace\left( \sigma \left( x\right) \sigma ^{T}\left( x\right)
D_{xx}u(x,t)\right) \\ 
u(x,0)=f(x)%
\end{cases}%
\label{eq:heat},
\end{equation}
where $\left\{ \boldsymbol{\mu} \left( x \right) :x\in \mathbb{R}^{d}\right\}
\subset \mathbb{R}^{d}$ is a random field in probability space $(\Omega,\mathcal{F},\mathbb{P})$ with its realization $\mu(\cdot,\omega):\mathbb{R}^d\rightarrow\mathbb{R}^d$ and $\left\{
\sigma (x)\sigma ^{T}(x):x\in \mathbb{R}^{d}\right\} \subset \mathbb{R}%
^{d\times d}$ is from some deterministic (non-random) function $\sigma(\cdot)$. On the other hand,  $D_{x}u(x,t)\in\mathbb{R}^d$ and $D_{xx}u(x,t)\in\mathbb{R}^{d\times d}$ in \eqref{eq:heat} denote the partial derivatives while the function $trace(\cdot)$ is the matrix trace operator. The function $%
\left\{ f\left( x\right) :x\in \mathbb{R}^{d}\right\} $ describes the initial condition.

The heat equation \eqref{eq:heat} is a classic PDE that with many
applications. In different cases,  the interpretations for the
coefficients and the solution $u(x,t)$  are different. For example, in the theory of thermal
conductivity, the heat equation \eqref{eq:heat} follows from Fourier's law and the
solution $u(x,t)$ represents the temperature of the material at the location $x$ and time $t$%
, while the coefficients $\boldsymbol{\mu} $ and $\sigma $ characterize the thermal
conductivity of the material. On the other hand, in the theory of flow dynamics \cite{whitaker1986flow}, the heat equation \eqref{eq:heat} follows from Darcy's law when one tries to
describe the flow of fluids through a porous medium. Here, the solution $u(x,t)$ represents the fluid
pressure at location $x$ and time $t$, while the coefficients $\boldsymbol{\mu}$ and $\sigma$ characterize the
medium permeability. In both  cases, the coefficients at 
location $x$ reflect medium property, which are modeled as
random fields due to the heterogeneity of the media in practical applications. A partial literature on the modeling and analysis for the
heterogeneous random medium includes \cite%
{de2005dealing,ostoja2007microstructural,sobczyk2012stochastic}. However, for applications in finance, the equation (\ref{eq:heat}) could represent the price of a
European contract with payoff given by $f(x) $ at maturity
\cite{duffie2010dynamic}. In this case, the introduction of a random $\boldsymbol{\mu} $ and a deterministic 
$\sigma  $ is justified by the fact that the diffusion coefficient ${\sigma}$ can
often be estimated with reasonable accuracy in the setting of financial applications due to the characteristics of quardratic variations while the drift coefficient $\boldsymbol{\mu}$ is
typically difficult to calibrate \cite{hofmann1999lp,pastorello1996diffusion}. Thus, in this paper, we focus our attention on the cases of $\sigma(\cdot)$ taken to be determinstic whereas we note the proof can also be extended to incorporate sutiable assumptions on the randomness of $\sigma (\cdot)$. 

In these applications, since $\boldsymbol{\mu}$ is a random, the
solution $u(x,t): \mathbb{R}^d\times\mathbb{R}^{+}\rightarrow \mathbb{R}$ to Equation \eqref{eq:heat}  is also random. We use $\boldsymbol{u}(x,t)$ to denote the solution of \eqref{eq:heat} when the field  $\boldsymbol{\mu}$ is random  and use $u(x,t)$ when the field $\mu$ (or $\mu(\cdot,\omega)$) is fixed. As it turned out, in the context of random PDEs, it is common to evaluate expectations of the form
\begin{equation}
\nu =\mathbb{E}\left[ G(\boldsymbol{u}(x_{1},t_{1}),...,\boldsymbol{u}(x_{k},t_{k}))\right] ,
\label{Exp_G}
\end{equation}%
for certain values of $x_i,t_i, 1\leq i \leq k$ and some given function $G:\mathbb{R}^{d}\rightarrow \mathbb{R}$. As we shall see later, this task presents a analytic challenge
and it is natural for one to use Monte Carlo. In this paper, we introduce a methodology that provides an unbiased estimator for $\nu$ in \eqref{Exp_G} and  could be easily implemented by parallel
computing architectures.
\subsection{Main contribution}

Under
reasonable regularity conditions to be specified in Theorem~\ref{thm:main}, we construct a random
variable $W$ satisfying unbiasedness with $\mathbb{E}\left( W\right) =\nu$, finite variance with ${V}ar\left( W\right) <\infty $ and finite expected cost, i.e., the computational cost to simulate $W$ has finite expectation. Consequently, one can then generate $n$
independent copies of $W$ in parallel servers
and combine them to provide an estimate as well as 
confidence intervals for $\nu$ in \eqref{Exp_G}  with O$\left( n^{-1/2}\right) $ rate of convergence 
dictated by the central limit theorem (CLT). Specifically, if the parallel computing cores are relatively cheap and
wall-clock time is a relatively hard constraint, then the estimator $W$ we propose in
this paper is precisely the type of solution one wants to use in order to estimate $\nu$ in \eqref{Exp_G}.

As far as we know, our paper is the first to introcude unbiased estimators of $\nu$
with square-root convergence rate for  arbitrary dimension $d$ in the PDE \eqref{eq:heat}. For example,  the unbiased estimator proposed in \cite%
{li2016multilevel} is related to the solution of elliptic equations with random
inputs and Dirichlet boundary conditions. However, even though the sampling strategy in \cite{li2016multilevel} achieves square-root convergence rate, the estimator has finite variance only if $d\leq 3$. In other words, the procedure in \cite{li2016multilevel} suffers from the curse of dimensionality. In particular, the procedure in \cite{li2016multilevel} is to numerically solve the PDE using the finite element method (FEM)
whose error analysis on the rate of convergence depends 
on the underlying dimension $d$.  In fact, there has been a substantial amount of recent literature combining the multilevel Monte Carlo technique with the
numerical methods for PDE, all of which suffers from the curse of dimensionality, as the rate of convergence deteriorates with the increase of
problem dimensions \cite
{li2016multilevel, charrier2013finite, cliffe2011multilevel, mishra2012multi}. On the other hand, other available methods in the literature such as \cite%
{teckentrup2015multilevel,giles2015multilevel,crevillen2017multilevel} produce biased
estimators. 
In contrast, our method allows for a full Monte Carlo procedure with a
traditional square-root convergence rate for any dimension $d$. Thus, our proposed method preserves the
well-known characteristic of the Monte Carlo method in effectively combating the curse of dimensionality.
Although the constants in the 
convergence rate analysis of Monte Carlo depend on the dimension $d$, the
convergence rate as a function of the total number of random variables
generated (the level of simulation accuracy) is of the same order for any $d$. 

\subsection{Technical  contribution}

In this paper, we also exploit the connection between the parabolic PDE and stochastic
differential equations (SDE) in order to construct $%
W$. In particular, given a realization of the random field $\mu \left( \cdot ,\omega \right) $, it follows from the celebrated Feynman-Kac formula that one can represent the solution of Equation \eqref{eq:heat} $u\left( x,t\right) $ as the
expectation involving the solution of SDEs. Thus, conditioning on $\mu \left( \cdot ,\omega \right) $, one can use 
a multilevel Monte Carlo construction in \cite{giles2014}  that efficiently discretizes the underlying SDE  to reduce variance and combine the method with a
randomization step in \cite{rhee2015} to construct an unbiased estimator. Finally, we introduce an additional randomization technique similar to that in \cite%
{blanchet2015unbiased}  to account for the randomness of $\boldsymbol{\mu} $. 

On the other hand, in terms of technical contribution, the error analysis of the additional randomization
step requires a non-standard technical development. In particular,
conditioning on $\mu \left( \cdot ,\omega \right) $, the standard 
error analysis in \cite{giles2014,kloeden2011numerical}
would yield a term with infinite expectation,  preventing us from showing the finite variance property of our estimator. This is due to the
presence of the famous Gronwall's inequality \cite{howard1998gronwall} as a common tool in stochastic analyses (see \cite{kloeden2011numerical} and the remark
following Lemma~\ref{orderfirstlemma}). 

In order to overcome this issue, we use the theory of rough paths to
create path-by-path estimates. The
theory of rough paths \cite{lyons1998differential,davie2008differential,friz2010multidimensional,friz2014course} has received substantial attention in the literature due
to its connection to the theory of regularity structures and its
implications in nonlinear stochastic PDEs  \cite%
{hairer2014theory}. On the other hand, a significant
amount of literature has also been devoted to the connection between the theory of rough paths and stochastic numerical analysis in the setting of cubature methods \cite%
{lyons2004cubature} or SDEs \cite%
{bayer2016rough,blanchet2017}. In this light, our paper is the
first to connect rough paths estimates with the numerical analysis of random PDEs and thus adds to the growing literature combining the theory of
rough paths with numerical stochastic analysis. 
\section{Main results  }\label{sec:main}
\subsection{Assumptions and technical conditions}
\begin{assump}
	\label{assump:1} The random field $\boldsymbol{\mu}(\cdot):\mathbb{R}^d\rightarrow\mathbb{R}^d$ has the following expansion 
	\begin{equation}\label{field}
	\boldsymbol{\mu} (\cdot )=\sum_{i=1}^{\infty }\frac{\lambda _{i}}{i^{q}}\cdot\boldsymbol{V}_{i}\cdot\psi _{i}(\cdot)
	\end{equation}%
	where $q>4$ is a fixed constant, $\{\lambda_i\}_{i \geq 1}$ is a uniformly
	bounded sequence and $\{\boldsymbol{V}_{i}\}_{i \geq 1}$ independent $d$ dimensional Gaussian vectors  $\mathcal{N}(\boldsymbol{0},\Sigma_i)$, with  $\|\Sigma_i\|_{F}<L$,
	for all $i \geq 1$  and a constant $L>1$,  with $\|\cdot\|_{F}$ denoting the Frobenius norm. Moreover, $\psi _{i}(\cdot):\mathbb{R}%
	^{d}\rightarrow \mathbb{R}$ ($i=1,2,...$) is a sequence of deterministic functions that, for all $0\leq k,l\leq d$,
	\begin{equation}
	\| \psi _{i}\|_{\infty} < L,~~\| \frac{\partial {\psi _{i}}}{%
		\partial x_{l}}\|_{\infty} < iL,\text{ and }~~\| \frac{\partial ^{2}%
		{\psi _{i}}}{\partial x_{k}\partial x_{l}}\|_{\infty} < i^{2}L,
	\end{equation}
	for a constant $L>1$ with $\|\cdot\|_{\infty}$ denoting the supremum norm for functions.
\end{assump}
\begin{remark}
	The requirement on $\boldsymbol{V}_i$ can be relaxed by requiring that the tails of $\|\boldsymbol{V}_i\|_{\infty}$ decay faster than exponential functions uniformly in $i$. We focus on the Gaussian case for concreteness.
\end{remark}
\begin{defn}
	Denote $\mathcal{L}_1$ to be the space of  bounded, Lipschitz continuous and
	twice continuously differentiable fields where each of its element $\mu(\cdot):\mathbb{R}^d\rightarrow\mathbb{R}^d$ satisfies,
	\begin{equation}
	\|\mu\|_{\infty}< L_1, ~~ \| 
	\frac{\partial \mu_i}{\partial x_l}\|_{\infty}<L_1 ~~\text{ and }~
	\| \frac{\partial^2 \mu_{i}}{\partial x_k\partial x_l}%
	\|_{\infty} <L_1
	\end{equation}
	for  $1\leq i,k,l\leq d$ and some positive $L_1<\infty$ depending on $\mu(\cdot)$. Then, we define $L_1$ to be a bounding number for $\mu(\cdot)$.
\end{defn}

\begin{lemma}\label{assump1}
	\label{lemma L1} Under Assumption~\ref{assump:1}, $\boldsymbol{\mu}(\cdot) \in \mathcal{L}_1$ almost surely and for $n\geq 0$, the partial sum $\boldsymbol{S}_n=\sum_{i=1}^{n }\frac{\lambda _{i}}{i^{q}}\cdot\boldsymbol{V}_{i}\cdot\psi _{i}(\cdot)\in\mathcal{L}_1$ almost surely. Furthermore, there exists a random variable $\boldsymbol{L}_1>1$ 	with $\mathbb{E}(e^{t\boldsymbol{L}_1}) < \infty$ for all
	$t\in\mathbb{R}$ (i.e., well-defined moment-generating function) and it is a bounding number for $\boldsymbol{\mu}$ and $\{ \boldsymbol{S}_n \}_{n \geq 0}$ 
	almost surely.
\end{lemma}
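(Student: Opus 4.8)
The plan is to dominate each of the three quantities appearing in the definition of $\mathcal{L}_1$, for $\boldsymbol{S}_n$ and hence for $\boldsymbol{\mu}$, by a single convergent random series, and then to show that series has a finite moment generating function by a Gaussian concentration estimate that is uniform in the index $i$.

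First I would set $\Lambda:=\sup_{i\ge1}|\lambda_i|<\infty$ (finite by Assumption~\ref{assump:1}) and, using the sup-norm bounds on $\psi_i$ and its first and second derivatives, estimate
\begin{align}
\|\boldsymbol{S}_n\|_{\infty}&\le\sum_{i=1}^{n}\frac{|\lambda_i|}{i^{q}}\|\boldsymbol{V}_i\|\,\|\psi_i\|_{\infty}\le\Lambda L\sum_{i=1}^{n}\frac{\|\boldsymbol{V}_i\|}{i^{q}},\\
\Big\|\frac{\partial (\boldsymbol{S}_n)_j}{\partial x_l}\Big\|_{\infty}&\le\Lambda L\sum_{i=1}^{n}\frac{\|\boldsymbol{V}_i\|}{i^{q-1}},\qquad\Big\|\frac{\partial^{2}(\boldsymbol{S}_n)_j}{\partial x_k\partial x_l}\Big\|_{\infty}\le\Lambda L\sum_{i=1}^{n}\frac{\|\boldsymbol{V}_i\|}{i^{q-2}}.
\end{align}
Since $i^{-q}\le i^{-(q-1)}\le i^{-(q-2)}$, all three are bounded by $\Lambda L\,Z$ with $Z:=\sum_{i\ge1}\|\boldsymbol{V}_i\|/i^{q-2}$, and because $q>4$ we have $q-2>2$, so the deterministic coefficients $a_i:=i^{-(q-2)}$ are square-summable; together with the uniform bound $\mathbb{E}\|\boldsymbol{V}_i\|\le\sqrt{Ld}$ established below, this gives $\mathbb{E}Z<\infty$ and in particular $Z<\infty$ a.s. I then define $\boldsymbol{L}_1:=\max\{2,\,c_d\Lambda L\,Z\}$, where $c_d$ is a dimensional constant relating the norm $\|\cdot\|_{\infty}$ on $\mathbb{R}^d$-valued fields to the Euclidean norm of $\boldsymbol{V}_i$. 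This $\boldsymbol{L}_1>1$ is a bounding number for every $\boldsymbol{S}_n$ simultaneously (the truncation only drops nonnegative terms), and letting $n\to\infty$ it is a bounding number for $\boldsymbol{\mu}$; the passage to the limit also requires the standard theorem on termwise differentiation of uniformly convergent series, which applies here because the series of first (resp. second) partial derivatives is a.s. dominated on all of $\mathbb{R}^d$ by $\Lambda L\sum\|\boldsymbol{V}_i\|/i^{q-1}$ (resp. $\Lambda L\sum\|\boldsymbol{V}_i\|/i^{q-2}$), whence $\boldsymbol{\mu}\in C^2$ with the stated bounds and $\boldsymbol{\mu},\boldsymbol{S}_n\in\mathcal{L}_1$ a.s.

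It then remains to bound $\mathbb{E}\exp(tZ)$ for $t>0$ (for $t\le0$ the MGF claim is trivial since $Z\ge0$ and $\boldsymbol{L}_1\ge2$). Since $\boldsymbol{V}_i\sim\mathcal{N}(\boldsymbol{0},\Sigma_i)$ with $\lambda_{\max}(\Sigma_i)\le\|\Sigma_i\|_F<L$, we may write $\|\boldsymbol{V}_i\|\overset{d}{=}\|\Sigma_i^{1/2}g\|\le L^{1/2}\|g\|$ for $g\sim\mathcal{N}(\boldsymbol{0},I_d)$; as $g\mapsto\|g\|$ is $1$-Lipschitz, Gaussian concentration yields $\mathbb{E}\exp(s\|\boldsymbol{V}_i\|)\le\exp(s\sqrt{Ld}+s^{2}L/2)$ for all $s$, uniformly in $i$ (this is also the source of the bound $\mathbb{E}\|\boldsymbol{V}_i\|\le\sqrt{Ld}$ used above). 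By independence of the $\boldsymbol{V}_i$ and monotone convergence applied to $Z_n=\sum_{i\le n}a_i\|\boldsymbol{V}_i\|\uparrow Z$,
\begin{equation}
\mathbb{E}e^{tZ}=\lim_{n\to\infty}\prod_{i=1}^{n}\mathbb{E}e^{ta_i\|\boldsymbol{V}_i\|}\le\exp\Big(t\sqrt{Ld}\sum_{i\ge1}a_i+\frac{t^{2}L}{2}\sum_{i\ge1}a_i^{2}\Big)<\infty,
\end{equation}
since both $\sum a_i$ and $\sum a_i^{2}$ converge. Hence $\mathbb{E}\exp(t\boldsymbol{L}_1)\le e^{2t}+\mathbb{E}\exp(t\,c_d\Lambda L\,Z)<\infty$ for $t>0$ and $\le1$ for $t\le0$, which is the asserted moment-generating-function bound.

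The main obstacle — a mild one — is the last step: obtaining a tail bound on $\|\boldsymbol{V}_i\|$ that is \emph{uniform in $i$} and robust enough to survive summation against the coefficients $a_i$ for arbitrarily large $t$. The uniform control $\|\Sigma_i\|_F<L$ combined with the Lipschitz/Gaussian-concentration estimate is exactly what delivers this, and the hypothesis $q>4$ is precisely what guarantees square-summability of the coefficients after the two orders of derivative loss. Everything else (the termwise norm bounds, uniform convergence of the differentiated series, and the fact that $\boldsymbol{L}_1$ works for all truncations at once) is routine.
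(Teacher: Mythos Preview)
Your argument is correct and complete for the stated lemma, but it follows a genuinely different route from the paper. The paper does not sum the $\|\boldsymbol{V}_i\|$ directly. Instead it extracts a common factor by writing $i^{-q}=i^{-(4+(q-4)/2)}\cdot i^{-(q-4)/2}$ and defines $N_{(q-4)/2}:=\sup_{i\ge1}\|\boldsymbol{V}_i\|_{\infty}/i^{(q-4)/2}$; a separate supporting lemma (proved via a Gaussian tail bound and an infinite-product/union-bound computation) shows this supremum has finite moment-generating function, and then each of the three $\mathcal{L}_1$-quantities is bounded by $C\,N_{(q-4)/2}$ with a deterministic convergent prefactor. Your approach replaces the supremum lemma by the observation that, by independence and Gaussian concentration of the $1$-Lipschitz map $g\mapsto\|g\|$, the series $Z=\sum_i i^{-(q-2)}\|\boldsymbol{V}_i\|$ itself has a finite MGF, which is more direct and avoids the auxiliary lemma entirely. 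The paper's decomposition buys one extra thing that is used later but is \emph{not} part of the present statement: a quantitative decay $\|\boldsymbol{\mu}-\boldsymbol{\mu}^{(n)}\|_{\infty}\le \boldsymbol{L}_1\,\Delta t_n^{(3+(q-4)/2)\gamma}$ for the tail of the expansion, obtained from the same supremum bound; your construction of $\boldsymbol{L}_1$ via $Z$ does not immediately yield this rate, so if you plan to reuse $\boldsymbol{L}_1$ downstream you would need either to switch to the supremum quantity or to add a short separate estimate for the tail series.
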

\begin{proof}
	See Section~\ref{sec:proof}.
\end{proof}

\begin{assump}
	\label{assump:2}  There exists a constant $L>1$
	such that for $1\leq i,j,l, i',j'\leq d^{\prime}$, 
	\begin{equation}
	\| \sigma _{i'}\|_{\infty} < L,~~\| \frac{\partial {\sigma }%
		_{i'j'}}{\partial x_{l}}\|_{\infty} < L~~\text{and}~~ \|\frac{ \partial ^{2}{\sigma }%
		_{i'j'}}{\partial x_{k}\partial x_{l}}\|_{\infty} < L,
	\end{equation}
	\begin{equation}
	\|\frac{ \partial f}{\partial x_{i}}\|_{\infty} < L ~~\text{and}~~ \| \frac{\partial
		^{2}f}{\partial x_{i}\partial x_{j}}\|_{\infty} < L.
	\end{equation}
\end{assump}

\begin{assump}
	\label{assump:3} There exists a positive constant $%
	1<L<\infty$ such that for  $1\leq i,j\leq k$,
	\begin{equation*}
	\|\frac{\partial ^{2}G}{\partial
		x_{i}\partial x_{j}}\|_{\infty}< L~~\text{ and }~~\|\frac{\partial G}{\partial x_{i}}\|_{\infty}< L.
	\end{equation*}
\end{assump}
\subsection{Main theorem}
\begin{theorem}
	\label{add5} \label{thm:main} Under Assumptions~\ref{assump:1}-\ref{assump:3}%
	, we can construct a random variable $W$,  which is an unbiased estimator for $\nu$ defined
	in \eqref{Exp_G}. 
	Moreover, $W$ has a finite variance and  the computational cost for simulating one copy of $W$ has a finite expectation.

\end{theorem}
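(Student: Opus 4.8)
The plan is to construct $W$ as a triple randomization built around the three sources of error: (i) the randomness of the field $\boldsymbol{\mu}$, (ii) the Monte Carlo error in the Feynman–Kac expectation conditional on a fixed realization of the field, and (iii) the time-discretization error of the underlying SDE. First I would fix a realization $\mu(\cdot,\omega)$ and recall from the Feynman–Kac representation that $u(x_i,t_i)=\mathbb{E}_{x_i}[f(X^{(i)}_{t_i})\mid \mu]$ where $X^{(i)}$ solves $\mathrm{d}X_s=\mu(X_s)\,\mathrm{d}s+\sigma(X_s)\,\mathrm{d}B_s$, so that $\nu=\mathbb{E}[G(u(x_1,t_1),\dots,u(x_k,t_k))]$. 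Since $G$ is nonlinear, we cannot simply average SDE paths; instead I would apply the single-term / randomized-multilevel construction of \cite{rhee2015,blanchet2015unbiased}: introduce a random level $N$ (geometrically distributed, say with $\mathbb{P}(N\ge n)=2^{-\beta n}$ for a tunable $\beta$), use an Euler discretization with $2^n$ steps to produce coupled estimators of $G$ at consecutive levels, and define the level-$n$ increment $\Delta_n$ as the difference of two such plug-in estimators sharing Brownian paths; then $W_\mu=\sum_{n}\Delta_n/\mathbb{P}(N\ge n)$ truncated at $N$ is, conditional on $\mu$, an unbiased estimator of $G(u(x_1,t_1),\dots)$, provided $\sum_n 2^{\beta n}\mathbb{E}[\|\Delta_n\|^2\mid\mu]<\infty$ and the expected cost $\sum_n 2^{\beta n}2^{n}\mathbb{P}(N\ge n)<\infty$.

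Next I would handle the outer randomization over $\boldsymbol{\mu}$. Because $\boldsymbol{\mu}(\cdot)=\sum_{i\ge1}\tfrac{\lambda_i}{i^q}\boldsymbol{V}_i\psi_i(\cdot)$ is an infinite series, I cannot sample it exactly; I would instead sample the partial sums $\boldsymbol{S}_m$ and apply a second randomized-telescoping argument in $m$: pick a random truncation index $M$ with heavy enough tails, form coupled estimators $W_{\boldsymbol{S}_m}$ and $W_{\boldsymbol{S}_{m-1}}$ (coupled through the same $\boldsymbol{V}_i$, the same Brownian paths, and the same level randomization), and set $W=\sum_{m}\frac{W_{\boldsymbol{S}_m}-W_{\boldsymbol{S}_{m-1}}}{\mathbb{P}(M\ge m)}$ truncated at $M$. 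Unbiasedness of $W$ for $\nu$ then follows by taking expectations in the order: Brownian/level randomization first (giving $G(\boldsymbol{S}_m)$-type quantities), then the $m$-telescope (giving the limit as $m\to\infty$, which is $G(\boldsymbol{u})$ by Lemma~\ref{lemma L1} and continuity), then expectation over the field. The bookkeeping is: one must verify an interchange-of-expectation / dominated-convergence condition so that $\mathbb{E}[W]$ equals the iterated limit — this is exactly where Lemma~\ref{lemma L1}'s random bounding number $\boldsymbol{L}_1$ with a finite moment generating function is used, since all the SDE estimates (Lipschitz constants, strong error rates) depend polynomially on $\boldsymbol{L}_1$, and $\mathbb{E}[e^{t\boldsymbol{L}_1}]<\infty$ tames these.

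The key quantitative input, and the main obstacle, is the two-sided rate estimate: conditional on a field with bounding number $L_1$, I need $\mathbb{E}[\|\Delta_n\|^2\mid\mu]\le C(L_1)\,2^{-\gamma n}$ and, for the field telescope, $\mathbb{E}[\|W_{\boldsymbol{S}_m}-W_{\boldsymbol{S}_{m-1}}\|^2]\le C\,(m^{-\rho})$ for some $\gamma,\rho>0$, with constants $C(L_1)$ that grow at most polynomially in $L_1$ so that, after integrating against $\mathbb{E}[e^{t\boldsymbol{L}_1}]<\infty$, everything is finite; one then chooses the randomization tail parameters $\beta,$ and the $M$-tail strictly between the rate and the cost exponents so that variance and expected cost are simultaneously finite (the standard Rhee–Glynn trade-off). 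Here the naive route via Gronwall's inequality fails, because it produces a constant of the form $e^{C L_1 t_i}$, which is \emph{not} integrable against the mere exponential moment of $\boldsymbol{L}_1$ when combined with the additional factors from the discretization analysis (cf.\ the remark following Lemma~\ref{orderfirstlemma}). The fix is to replace the Gronwall step with path-by-path rough-path estimates: writing the Euler scheme error in terms of the rough-path lift of $B$, the error bound depends on the field only through $\|\mu\|$, $\|D\mu\|$, $\|D^2\mu\|$ — i.e.\ polynomially in $L_1$ — times a random rough-path norm of $B$ with Gaussian-type tails independent of the field, and the two contributions decouple multiplicatively. Establishing this rough-path error expansion for the Euler scheme with the right dependence on $L_1$, and checking the difference estimate in $m$ (which requires controlling $\|\boldsymbol{S}_m-\boldsymbol{\mu}\|_{\mathcal{L}_1}$ via the tail of the series, using $q>4$), is the technical heart of the argument; the rest is assembling the randomized estimators and verifying the Rhee–Glynn finiteness conditions.
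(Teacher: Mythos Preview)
Your high-level picture is right --- randomized multilevel for the SDE, a Blanchet--Glynn layer for the nonlinear $G$, and rough-path estimates to avoid the bad Gronwall constant --- but two concrete choices in your proposal would break the construction, and one organizational choice differs from the paper in a way worth noting.

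\textbf{Euler is not enough.} You write ``Euler discretization with $2^n$ steps.'' With Euler, the level increments satisfy at best $\mathbb{E}[\Delta_n^2]\asymp \Delta t_n$ and hence $\mathbb{E}[\Delta_n^4]\asymp \Delta t_n^{2}$. But the Blanchet--Glynn layer for $G$ requires a finite \emph{fourth} moment of the inner estimator $Z$, which forces $\sum_n \mathbb{E}[\Delta_n^4]/p_n^{3}<\infty$; with $p_n\asymp 2^{-\theta n}$ this means $3\theta<2$, while finite expected cost (with per-level cost $\asymp 2^{(1+\gamma)n}$) needs $\theta>1+\gamma>1$. These are incompatible. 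The paper resolves this by using the antithetic truncated-Milstein scheme of Giles--Szpruch, which yields $\mathbb{E}[\Delta_n^4]\le e^{CL_1}\Delta t_n^{4-\delta}$ (Lemma~\ref{finitefourthmoment}); that extra order is exactly what makes the Rhee--Glynn window $1+\gamma<\theta<(4-\delta)/3$ nonempty.

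\textbf{The two inner randomizations must be kept separate.} Your description ``produce coupled estimators of $G$ at consecutive levels \dots\ $\Delta_n$ as the difference of two such plug-in estimators'' reads as a single telescope over discretization levels with $G$ applied to single-path quantities. That would estimate $\lim_n\mathbb{E}[G(\hat Y_n)]$, not $G(\mathbb{E}[Y])$, and these differ for nonlinear $G$. The paper keeps the layers distinct: first build $Z_i(\mu)$ unbiased for $u(x_i,t_i)$ via the level-$N$ randomization over SDE discretization (Lemma~\ref{ieieieieieieie}); then build $W(\mu)$ via a \emph{second} randomization $\tilde N$ over sample-average sizes $2^{\tilde N+n_1}$ of i.i.d.\ copies of the $Z_i$'s (Lemmas~\ref{ieieieieieie}--\ref{varianceW}). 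You cite \cite{blanchet2015unbiased}, so you may have this in mind, but the proposal as written does not reflect it.

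\textbf{Field truncation: a different route.} You propose a third, separate randomization over the truncation index $M$ of $\boldsymbol S_M$. The paper instead \emph{ties} the field truncation to the SDE level by taking $\mu^{(n)}=\boldsymbol S_{\lfloor 2^{n\gamma}\rfloor}$ inside the numerical scheme; the truncation error then becomes part of the level-$n$ discretization error and is absorbed into the same $\Delta_n$ analysis (see \eqref{remainingfield} and the proof of Lemma~\ref{orderfirstlemma}). This removes an entire randomization layer and the associated coupling/variance analysis. Your route could be made to work but requires a separate stability estimate for $W_{S_m}-W_{S_{m-1}}$, which again runs through a Gronwall-type argument you would need to control.

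\textbf{On the rough-path constant.} You say the rough-path bound is ``polynomially in $L_1$.'' That is true for the path-by-path comparison between the scheme with exact $\mu$ and the true solution (Lemma~\ref{note lemma4}), but the full estimate the paper needs --- including the field-truncation part --- still comes out as $e^{CL_1}$, obtained by a careful induction that replaces the naive $e^{CL_1^4}$ from Gronwall by $e^{CL_1}$ (Statement~(I) in the proof of Lemma~\ref{orderfirstlemma}). Since $\mathbb{E}[e^{t\boldsymbol L_1}]<\infty$ for all $t$, this is enough; a purely polynomial bound is not actually achieved.
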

\begin{proof}
	See Section \ref{sec:proof-main}.
\end{proof}

\section{Construction of the estimator $W$}

\subsection{Preliminaries: Probabilistic representation of the solution  $u(x,t)$ }
Fixing $\mu\in\mathcal{L}_1$, the solution $u(x,t)$  to the PDE in %
\eqref{eq:heat} and certain $d$-dimensional diffusion process
are connected by the Feynman-Kac formula \cite{karatzas2012brownian}.
\begin{theorem}[Feynman-Kac Formula]\label{fkt}
	Fix $x\in\mathbb{R}^d$, $t \in \mathbb{R}^{+}$ and functions  $\sigma(\cdot),f(\cdot)$ satisfying Assumption~\ref{assump:2}.  For any $\mu(\cdot) \in \mathcal{L}_1$, the solution $u(x,t)$ of \eqref{eq:heat} satisfies
	\begin{equation}
	u(x,t)=\mathbb{E}[f(X_t)],
	\end{equation}
	where the expectation is taken w.r.t. to the $d$-dimensional diffusion process $X_s$ with $X_0=x$ following SDE (i.e., the unique strong solution),
	\begin{equation}\label{eq:SDE}
	dX_{s} =\mu (X_{s} )dt+\sigma (X_{s})dB_{s}%
	\quad\text{ for }\quad s>0,
	\end{equation}%
	with $B_s$ is $d'$-dimensional Brownian motion.
\end{theorem}
\begin{proof}
	Fix $\mu\in\mathcal{L}_1$, the existence and uniqueness of strong solution $\{X_s, s\geq 0\}$  are guaranteed by Assumption~\ref{assump:2} on the Lipschitz continuity of $\sigma(\cdot)$. The rest follows from the Feynman-Kac formula. For more details on the proof, See Chapter 4.4 in \cite{karatzas2012brownian}.
\end{proof}
Thus, fixing any  $\mu(\cdot)\in\mathcal{L}_1$, if we could build estimators $Z_i(\mu)$ satisfying 
$\mathbb{E}[Z_{i}(\mu)]=g(x_i,t_i;\mu)$,
and estimator $W(\mu)$ satisfying 
$\mathbb{E}[W(\mu)]=G(\mathbb{E}[Z_{1}(\mu)],...,%
\mathbb{E}[Z_k(\mu)])$, then based on Theorem~\ref{fkt}, formally we would have
\begin{equation}\label{intuition}
\mathbb{E}_{\mu\sim\boldsymbol{\mu}}[\mathbb{E}[W(\mu)]]=G(\mathbb E[Z_1(\boldsymbol{\mu})],...,\mathbb E[Z_k(\boldsymbol{\mu})])=\mathbb{E}[G(\boldsymbol{u}(x_{1},t_{1}),...,\boldsymbol{u}(x_{k},t_{k}))]=\nu,
\end{equation}
which is the desired property of $W$. We also note that the derivations in \eqref{intuition} assumes the measurability of $G(\boldsymbol{u}(x_{1},t_{1}),...,\boldsymbol{u}(x_{k},t_{k}))$(i.e., the well-definedness of $\nu$ ). This result can be proven using standard techniques and we omit it here.
\subsection{Step 1: Unbiased estimator $Z_i(\mu)$ for $g(x_i,t_i;\mu)$} 

\subsubsection{Variance Reduction and Bias Removal.}

Following the discussion above, given  $x,t$ and $\mu\in\mathcal{L}_1$, we first want to construct an estimator $Z(\mu)$ with $\mathbb{E}[Z(\mu)]=\mathbb{E}[f(X_t)].$ To estimate  $\mathbb{E}[f(X_t)]$, one way is to solve for the SDE \eqref{eq:SDE} numerically (e.g., Euler scheme, Milstein scheme \cite{kloeden2011numerical}) and use numerical solution $f(\hat X_t)$ as ``plug-in" estimators. However, such estimators have bias and suboptimal computational cost versus variance ratio as addressed in \cite{giles2013multilevel}. Before we remove bias, we first reduce the variance by multilevel Monte Carlo method  \cite{li2016multilevel,bayer2016rough,teckentrup2015multilevel,giles2013multilevel, giles2015multilevel}. For convenience, from now on, we assume $t=1$ and estimate $\mathbb{E}[f(X_1)]$. Now, given   $X_n(1)$, a numerical solution of the SDE at $t=1$ based on a level $n$ discretization, and a random variable $\Delta_n$ satisfying
\begin{equation}  
\mathbb{E}\Delta_n=
\mathbb{E}f(X_{n+1}(1))-\mathbb{E}f(X_n(1)).\label{eq:deltal}
\end{equation}
Then, a multilevel Monte Carlo(MLMC) estimator $Z_{\text{MLMC}}$ for $\mathbb{E}[f(X_1)]$ takes the form

\begin{equation}  \label{eq:MLMC}
Z_{\text{MLMC}} =\sum_{n=0}^N \frac{1}{M_n}\sum_{i=1}^{M_n}\Delta_n^{(i)} + \frac{1}{N_0}\sum_{i=1}^{N_0}f(X_0^{(i)}(1)),
\end{equation}
where  $\{
\Delta_n^{(i)}\}_{1\leq i \leq M_n}$ and $\{X_0^{(i)}(1)\}_{1\leq i \leq M_0}$ are I.I.D. copies. Usually, $N$ is
a large positive integer so the bias $|\mathbb{E} Z_{\text{MLMC%
}}-\mathbb{E}f(X_1)|= |\mathbb{E}f(X_{N+1}(1))-\mathbb{E}f(X_1)|$,
is small.  The optimal choice of $M_n$ depends on the variance and computational cost of $\Delta_n$, and  the  construction of $\Delta_n$'s satisfying \eqref{eq:deltal} usually involves variance reduction, an important technique in MLMC \cite{giles2008multilevel,giles2013multilevel}. In the next subsection, we further introduce a form of antithetic construction of $\Delta_n$, which reduces
$\mathbb{E} \Delta_n^2$ to order  $O(\Delta t_n^{2})$.

\subsubsection{Antithetic Multilevel Monte Carlo.}
One original kind of antithetic multilevel Monte Carlo construction of $\Delta_n$ is proposed in \cite{giles2014} for multidimensional SDEs. In this paper, we make modifications to the antithetic scheme in \cite{giles2014} to approximate the random field $\boldsymbol{\mu}$. Specifically, define
\begin{equation*}
\boldsymbol{\mu}^{(n)}(\cdot) \triangleq \sum_{i=1}^{\lfloor 2^{n\gamma} \rfloor} 
\frac{\lambda_i}{i^{q}}\cdot \boldsymbol{V}_i\cdot\psi_i(\cdot).
\end{equation*}
It follows from  Lemma~\ref{assump1} that $\boldsymbol{\mu}^{(n)}\in\mathcal{L}_1$ with the same bounding number $\boldsymbol{L}_1$ of $\boldsymbol{\mu}$. 
\begin{defn}\label{cdefn}
	Denote $\Delta
	t_{n}\triangleq 2^{-n}$ and  $%
	t_{k}^{n}\triangleq k\Delta t_{n}$. Define
	$\Delta B_{k}^{n}\triangleq
	B(t_{k+1}^{n})-B(t_{k}^{n})$ and $\Delta B_{j,k}^{n}\triangleq
	B_{j}(t_{k+1}^{n})-B_{j}(t_{k}^{n})$,
	to be the Brownian increments and its $j$th dimension component from Brownian motion. Finally, for $1\leq i, j\leq d^{\prime}$ and $i\neq j$,
	define
	\begin{equation*}
	\widetilde{A}_{i,j}(s,t)\triangleq \frac{
		(B_{i}(t)-B_{i}(s))(B_{j}(t)-B_{j}(s))}{2}
	\quad \text{and}\quad \widetilde{A}_{i,i}(s,t)\triangleq \frac{(B_{i}(t)-B_{i}(s))^{2}-(t-s)}{2},
	\end{equation*}%
\end{defn}
In particular, we approximate $\boldsymbol{\mu}$ by $\boldsymbol{\mu}^{(n)}$ and the process $\int_{s}^{t}\left(
B_{i}(r)-B_{j}(s)\right) dB_{j}(r)$ by $\widetilde{A}_{i,j}$.
Then $X_{i,n}(\cdot )$, the $i$th component $X_{n}(\cdot )$, is defined by the recursion
\begin{align}\label{coarse}
X_{i,n}(t_{k+1}^{n})=& X_{i,n}(t_{k}^{n})+\mu
_{i}^{(n)}(X_{n}(t_{k}^{n}))\Delta t_{n}+\sum_{j=1}^{d^{\prime}}\sigma
_{ij}(X_{n}(t_{k}^{n}))\Delta B_{j,k}^{n}  \notag   \\
& +\sum_{j=1}^{d^{\prime}}\sum_{l=1}^{d}\sum_{m=1}^{d^{\prime}}\frac{\partial \sigma _{ij}}{%
	\partial x_{l}}(X_{n}(t_{k}^{n}))\sigma _{lm}(X_{n}(t_{k}^{n}))\widetilde{A}%
_{mj}(t_{k}^{n},t_{k+1}^{n}),
\end{align}
for $0\leq k\leq 2^{n}-1$. We summarize the procedure in Algorithm \ref{Num} for $\textbf {``Num\_Sol"}$.

\makeatletter
\def\BState{\State\hskip-\ALG@thistlm}
\makeatother

\begin{algorithm}[H]
	\caption{Generate numerical solutions of the SDE}
	\begin{algorithmic}[1]
		\Procedure{Num\_ Sol}{$x, n, \{ \Delta B_k^n \}_{0\leq k \leq 2^n-1}$} with starting point $x \in \mathbb R^d$, level $n \geq 0$, $2^n$ Brownian increments $ \{ \Delta B_k^n \}_{0\leq k \leq 2^n-1}$ from the Brownian path.\label{Num}
		\State $X_{n}(0) \gets x \text{ and } {\mu}^{(n)}(\cdot) \gets \sum_{i=1}^{\lfloor 2^{n \gamma} \rfloor}\frac{\lambda_i}{i^q} \boldsymbol{V}_i  \phi_i(\cdot).$
		\For{$ 0\leq k \leq 2^n-1$}
		\For {$1\leq i \leq d'$}
		\State {$\widetilde{A}_{i,i}(t^n_k,t^n_{k+1})\gets \frac{(\Delta B^n_k)^{2}-\Delta t_n}{2},$}
		\For {$1 \leq j \leq d'$ and $j \neq i$}
		\State {$\widetilde{A}_{i,j}(t^n_k,t^n_{k+1})\gets \frac{(\Delta B^n_{i,k})(\Delta B^n_{j,k})}{2}.$}
		\EndFor
		\EndFor 
		\EndFor
		\For { $1 \leq k \leq 2^n-1$ and $1 \leq i \leq d$}
		\State {$X_{i,n}(t_{k+1}^{n})\gets X_{i,n}(t_{k}^{n})+\mu
			_{i}^{(n)}(X_{n}(t_{k}^{n}))\Delta t_{n}+\sum_{j=1}^{d^{\prime}}\sigma
			_{ij}(X_{n}(t_{k}^{n}))\Delta B_{j,k}^{n}$} \State{$\qquad\qquad\qquad+\sum_{j=1}^{d^{\prime}}\sum_{l=1}^{d}\sum_{m=1}^{d^{\prime}}\frac{\partial \sigma _{ij}}{%
				\partial x_{l}}(X_{n}(t_{k}^{n}))\sigma _{lm}(X_{n}(t_{k}^{n}))\widetilde{A}%
			_{mj}(t_{k}^{n},t_{k+1}^{n})$}
		\EndFor
		\EndProcedure
	\end{algorithmic}
\end{algorithm}

\begin{defn}\label{adefn}
	Fixing $n \geq 0$, given a sequence of Brownian increments $\{ \Delta B_k^n \}_{0 \leq k \leq 2^n-1 }$, we define the sequence of antithetic Brownian increments $\{\Delta B^{n,a}_k\}_{0\leq k \leq 2^{n}-1}$ by
	\begin{equation}\label{antitheticB}
	\Delta {B}_{2m}^{n,a} \triangleq \Delta B_{2m+1}^{n} \quad \text{and}\quad
	\Delta {B}_{2m+1}^{n,a}  \triangleq \Delta B_{2m}^{n} \qquad \text{for $0\leq m \leq 2^{n-1}-1$ }.
	\end{equation}
\end{defn}
The antithetic approximation $X_{n+1}^a(\cdot)$ follows the recursion:
\begin{align}
X_{i,n+1}^{a}(t_{k+1}^{n+1})& =X_{i,n+1}^{a}(t_{k}^{n+1})+\mu
_{i}^{(n+1)}(X_{n+1}^{a}(t_{k}^{n+1})){\Delta t_{n+1}}+\sum_{j=1}^{d^{\prime}}\sigma
_{ij}(X_{n+1}^{a}(t_{k}^{n+1}))\Delta {B}_{j,k}^{a,(n+1)}  \notag
\label{antithetic1} \\
& \qquad +\sum_{j=1}^{d^{\prime}}\sum_{l=1}^{d}\sum_{m=1}^{d^{\prime}}\frac{\partial \sigma
	_{ij}}{\partial x_{l}}(X_{n+1}^{a}(t_{k}^{n+1}))\sigma
_{lm}(X_{n+1}^{a}(t_{k}^{n+1}))\widetilde{A}%
_{mj}^{a}(t_{k}^{n+1},t_{k+1}^{n+1}).
\end{align}
In other words, $X_{n+1}^a (\cdot)\gets \textbf{Num\_Sol}(x,n+1,\{\Delta B^{n+1,a}_k \}_{1 \leq k \leq 2^{n+1}-1})$. However, from Definition \ref{adefn}, we have the important relations that \begin{align}  \label{trihere}
\Delta B^{n+1,a}_{2k}+ \Delta B^{n+1,a}_{2k+1} =\Delta B^{n+1}_{2k+1}+ \Delta B^{n+1}_{2k} =\Delta B^n_k=B(t_{k+1}^n)-B(t_{k}^n)
\end{align}
by summing the equations in \eqref{antitheticB}. This suggests the Brownian increments and the antithetic increments at time step $\Delta t_{n+1}$ produces the same increments at time step $\Delta t_n$. This motivates the construction of $\Delta_n$ as in \cite{giles2014} with reduced variance:
\begin{equation}  \label{eq:Delta_given_b}
\Delta_n\triangleq\frac{1}{2}\big( f(X_{n+1}^{f}(1))+f(X_{n+1}^{a}(1)) \big)%
-f(X_n(1)).
\end{equation}

\begin{algorithm}[H]
	\caption{Generate $\Delta_n$}
	\begin{algorithmic}[1]
		\Procedure{Delta\_Gen}{$x, n$} with input starting point $x \in \mathbb R^d$ and level $n \geq 0$.\label{Delta_Gen}
		\For {$0 \leq k \leq 2^n-1$ }
		\State {$\Delta B^n_k\gets B(t^{n+1}_{k+1})-B(t^n_k)$},
		\State{$\Delta B^{n+1}_{2k}\gets B(t^{n+1}_{2k+1})-B(t^{n+1}_{2k}), \Delta B^{n+1}_{2k+1}\gets B(t^{n+1}_{2k+2})-B(t^{n+1}_{2k+1})$},
		\State {$\Delta B^{n+1,a}_{2k} \gets \Delta B^{n+1}_{2k+1}$}, {$\Delta B^{n+1,a}_{2k+1} \gets \Delta B^{n+1}_{2k}$}
		\EndFor
		\State {$X_{n}(\cdot) \gets \textbf{Num\_Sol} (x,n, \{\Delta B_k^{n} \}_{1 \leq k \leq 2^{n}-1})$},
		\State {$X_{n+1}^f (\cdot) \gets \textbf{Num\_Sol} (x,n+1, \{\Delta B_k^{n+1} \}_{1 \leq k \leq 2^{n+1}-1})$},
		\State {$X_{n+1}^a (\cdot) \gets \textbf{Num\_Sol} (x,n+1, \{\Delta B_k^{n+1,a} \}_{1 \leq k \leq 2^{n+1}-1})$},
		\State {$\Delta_n\gets\frac{1}{2}\big( f(X_{n+1}^{f}(1))+f(X_{n+1}^{a}(1)) \big)%
			-f(X_n(1))$.}
		\EndProcedure
	\end{algorithmic}
\end{algorithm}
\begin{remark}
	We use the notation $X_{n+1}^f$ and $X_{n+1}^a$ consistently with \cite{giles2014} to represent the ``fine" and ``antithetic" solution on level $n+1$ versus the``coarse" solution $X_n$.
\end{remark}
\begin{lemma}
	Fixing $\mu\in\mathcal{L}_1$ and $\{{\mu}^{(n)}\}_{n\geq 1}\subset\mathcal{L}_1$, we have,as in \eqref{eq:deltal}, 
	\begin{equation}
	\mathbb{E}\Delta_n=
	\mathbb{E}f(X_{n+1}(1))-\mathbb{E}f(X_n(1)).
	\end{equation}
\end{lemma}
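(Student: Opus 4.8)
The plan is to unwind the definition \eqref{eq:Delta_given_b} of $\Delta_n$ and reduce the claim to a single distributional identity for the antithetic solution. By linearity of expectation,
\begin{equation*}
\mathbb{E}\Delta_n=\tfrac12\,\mathbb{E}f(X_{n+1}^{f}(1))+\tfrac12\,\mathbb{E}f(X_{n+1}^{a}(1))-\mathbb{E}f(X_n(1)),
\end{equation*}
so it is enough to check that $\mathbb{E}f(X_{n+1}^{f}(1))=\mathbb{E}f(X_{n+1}^{a}(1))=\mathbb{E}f(X_{n+1}(1))$. Each of these three expectations is finite: by Assumption~\ref{assump:2} the map $f$ has bounded first derivatives, hence at most linear growth, while the recursions \eqref{coarse} and \eqref{antithetic1} build $X_n(1)$, $X_{n+1}^{f}(1)$ and $X_{n+1}^{a}(1)$ from finitely many terms whose coefficients are bounded (through $\mu\in\mathcal{L}_1$ and Assumption~\ref{assump:2}) and which involve only Gaussian increments and the quadratic expressions $\widetilde{A}_{mj}$, all of them integrable; hence the three numerical solutions have finite first moments and $f$ of each is integrable.

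The first identity, $\mathbb{E}f(X_{n+1}^{f}(1))=\mathbb{E}f(X_{n+1}(1))$, holds by construction: $X_{n+1}^{f}$ and $X_{n+1}$ are both the output of the procedure of Algorithm~\ref{Num} run with starting point $x$, level $n+1$ and the genuine Brownian increments $\{\Delta B_k^{n+1}\}_{0\le k\le 2^{n+1}-1}$ over steps of length $\Delta t_{n+1}$, i.e.\ they are literally the same random variable. For the antithetic term, let $\pi$ be the involution of $\{0,1,\dots,2^{n+1}-1\}$ that interchanges $2m$ and $2m+1$; by \eqref{antitheticB}, $\Delta B_k^{n+1,a}=\Delta B_{\pi(k)}^{n+1}$ for every $k$. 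Since $B$ is a Brownian motion, the blocks $\{\Delta B_k^{n+1}\}_{0\le k\le 2^{n+1}-1}$ are i.i.d.\ $\mathcal{N}(\boldsymbol{0},\Delta t_{n+1}\,I_{d'})$, so relabelling them by the permutation $\pi$ leaves their joint distribution unchanged:
\begin{equation*}
\big(\Delta B_0^{n+1,a},\dots,\Delta B_{2^{n+1}-1}^{n+1,a}\big)\;\overset{d}{=}\;\big(\Delta B_0^{n+1},\dots,\Delta B_{2^{n+1}-1}^{n+1}\big).
\end{equation*}
For fixed $x$, fixed level $n+1$ and fixed field $\mu^{(n+1)}\in\mathcal{L}_1$, the terminal value returned by Algorithm~\ref{Num} is a fixed deterministic (continuous, hence measurable) function $\Phi$ of the tuple of driving increments, and comparing \eqref{coarse} with \eqref{antithetic1} shows $X_{n+1}(1)=\Phi\big(\{\Delta B_k^{n+1}\}\big)$ while $X_{n+1}^{a}(1)=\Phi\big(\{\Delta B_k^{n+1,a}\}\big)$. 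Pushing the displayed identity forward through $\Phi$ and then through $f$ gives $f(X_{n+1}^{a}(1))\overset{d}{=}f(X_{n+1}(1))$, whence $\mathbb{E}f(X_{n+1}^{a}(1))=\mathbb{E}f(X_{n+1}(1))$. Substituting both identities into the first display yields $\mathbb{E}\Delta_n=\mathbb{E}f(X_{n+1}(1))-\mathbb{E}f(X_n(1))$, as in \eqref{eq:deltal}.

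There is no deep difficulty in this argument; the single point that must be handled carefully is the distributional step above — recognizing the antithetic relabelling \eqref{antitheticB} as an interchange of i.i.d.\ Gaussian increments, so that it is distribution-preserving, together with the observation that the scheme of Algorithm~\ref{Num} depends on the driving path only through the fixed measurable map $\Phi$. It is worth stressing that $X_n$, $X_{n+1}^{f}$ and $X_{n+1}^{a}$ are all coupled through the \emph{same} Brownian path $B$ — which is exactly what makes $\Delta_n$ an effective multilevel correction in \eqref{eq:Delta_given_b} — yet the computation of each individual expectation uses only the marginal law of the corresponding increments, so the coupling plays no role in this lemma. The only remaining point, the integrability noted in the first paragraph, is routine given the bounded coefficients in Assumptions~\ref{assump:1}--\ref{assump:2}.
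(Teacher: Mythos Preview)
Your proof is correct and follows essentially the same approach as the paper's own argument: both rest on the observation that the antithetic swap \eqref{antitheticB} is a permutation of the i.i.d.\ Brownian increments, so $X_{n+1}^{a}(1)$ and $X_{n+1}^{f}(1)$ have the same marginal law. Your write-up is simply more explicit than the paper's one-line justification, spelling out the measurable-map argument for $\Phi$ and noting integrability, but there is no substantive difference in method.
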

\begin{proof}
	Fixing any $n \geq 1$, since the Brownian increments are I.I.D., $X_{n+1}^{f}(1)$ and $X_{n+1}^{a}(1)$ produced by two recursions under the swapping of Brownian increments would follow the same marginal distribution, namely $\mathbb{E}f(X_{n+1}^{a}(1))=\mathbb{E}f(X_{n+1}^{f}(1))$.
\end{proof}
Now, we present a bound on $\mathbb{E}\Vert X_{n}(t)-X_{t}\Vert _{\infty }^{4}$, with  $\|\cdot \|_{\infty}$ norm. The reason for the fourth moment will become clear later.
\begin{lemma}\label{orderfirstlemma} 
	Fixing $\mu\in\mathcal{L}_1$, $\{{\mu}^{(n)}\}_{n\geq 1}\subset\mathcal{L}_1$. Let $X(\cdot)$ be the solution of the  SDE in \eqref{eq:SDE} and $X_n(\cdot)$ be the numerical approximation in \eqref{coarse}. Then,  for any  $\epsilon'>0$, we can find $0<\epsilon<\epsilon'$ ,  $0<\gamma<\frac{1}{3}$ and  $C>1$  such that, uniformly for all $0 \leq t \leq 1$,
	\begin{equation}  \label{manba}
	\mathbb{E}\|X_n(t)-X_t\|_{\infty}^4 \leq
	e^{CL_1}\cdot\Delta t_n^{2-\epsilon},
	\end{equation}
\end{lemma}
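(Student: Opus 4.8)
The plan is to establish \eqref{manba} by decomposing $X_n(t)-X_t$ into a field-truncation error and a discretization error and bounding each by $e^{CL_1}\Delta t_n^{2-\epsilon}$. Write $\mu^{(n)}$ for the $\lfloor 2^{n\gamma}\rfloor$-term truncation of $\mu$ entering \eqref{coarse}, and let $\bar X_n(\cdot)$ solve the recursion \eqref{coarse} run with the full drift $\mu$ in place of $\mu^{(n)}$, driven by the same Brownian path, the same increments $\{\Delta B_k^n\}$ and the same area surrogates $\widetilde A$ as $X_n$. By convexity of $x\mapsto x^4$,
\begin{equation*}
\mathbb{E}\|X_n(t)-X_t\|_\infty^4 \le 8\,\mathbb{E}\|X_n(t)-\bar X_n(t)\|_\infty^4 + 8\,\mathbb{E}\|\bar X_n(t)-X_t\|_\infty^4,
\end{equation*}
so it suffices to bound each term on the right by $e^{CL_1}\Delta t_n^{2-\epsilon}$, uniformly in $t\in[0,1]$.

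For the discretization term, I would carry out the strong-error analysis of the Giles--Szpruch-type simplified Milstein scheme \eqref{coarse}, which replaces the off-diagonal iterated Brownian integrals by $\widetilde A$. Expanding $X$ by a stochastic (It\^o--)Taylor expansion on each grid cell $[t_k^n,t_{k+1}^n]$ and subtracting \eqref{coarse}, the one-step error $e_{k+1}-e_k$, with $e_k:=X_n(t_k^n)-X_{t_k^n}$, splits into: (i) a predictable remainder of size $O(\Delta t_n^2)$ from the drift and the second-order Taylor terms; (ii) conditionally-centered increments of conditional $L^2$-size $\lesssim \|e_k\|_\infty\sqrt{\Delta t_n}$, from the Lipschitz mismatch of $\sigma$ and $\partial\sigma/\partial x_l$ evaluated at $X_n(t_k^n)$ versus $X_{t_k^n}$; and (iii) a conditionally-centered increment of conditional $L^2$-size $O(\Delta t_n)$ collecting the difference between the true off-diagonal double integrals $\int_{t_k^n}^{t_{k+1}^n}(B_m(r)-B_m(t_k^n))\,dB_j(r)$ and the surrogates $\widetilde A_{mj}$ --- note $\widetilde A_{ii}$ in Definition~\ref{cdefn} reproduces the diagonal integral exactly, so only off-diagonal L\'evy areas are approximated and their error is conditionally mean-zero --- together with higher stochastic-Taylor remainders of conditional $L^2$-size $O(\Delta t_n^{3/2})$. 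Feeding this into a fourth-moment discrete Gronwall recursion (using the bounds on $\mu,\sigma$ and their first derivatives from the bounding number $L_1$ and Assumption~\ref{assump:2}, a Burkholder--Davis--Gundy bound at exponent $4$ for the accumulated conditionally-centered increments, and the standard uniform-in-$n$ moment bounds for the scheme that follow from bounded coefficients), the $2^n$ type-(iii) source increments accumulate like a martingale and contribute only $O\big((2^n\Delta t_n^2)^2\big)=O(\Delta t_n^2)$ at the fourth-moment level, while the type-(ii) increments feed back into the recursion and yield the Gronwall factor $e^{CL_1}$. This gives $\mathbb{E}\|e_k\|_\infty^4 \le e^{CL_1}\Delta t_n^2$ simultaneously at all grid points, and a within-cell bound (the true solution moves by $O(\sqrt{\Delta t_n})$ over one cell, i.e.\ by $O(\Delta t_n^2)$ to the fourth moment, while the natural piecewise-constant interpolant of the scheme is constant on $[t_k^n,t_{k+1}^n)$) extends it to all $t\in[0,1]$. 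This is already within the claimed rate; the slack $\epsilon$ is spent in the next step.

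For the truncation term, $X_n$ and $\bar X_n$ share the same noise and differ only through the drift, so each step contributes $\mu^{(n)}(X_n(t_k^n))-\mu(\bar X_n(t_k^n)) = [\mu^{(n)}-\mu](X_n(t_k^n)) + [\mu(X_n(t_k^n))-\mu(\bar X_n(t_k^n))]$, a deterministic-field mismatch plus a state-Lipschitz term (the $\sigma$- and $\partial\sigma$-terms also contribute only state-Lipschitz differences). By Assumption~\ref{assump:1},
\begin{equation*}
\|\mu^{(n)}-\mu\|_\infty \le L^2\sum_{i>\lfloor 2^{n\gamma}\rfloor}\frac{\|\boldsymbol V_i\|}{i^{q}} \le C_\mu\,2^{-n\gamma(q-1)},
\end{equation*}
the series converging because $q>4$. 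A second fourth-moment discrete Gronwall estimate then gives $\mathbb{E}\|X_n(t)-\bar X_n(t)\|_\infty^4 \le e^{CL_1}\,2^{-4n\gamma(q-1)}$ uniformly in $t$. Since $q>4$ forces $\tfrac{2-\epsilon}{4(q-1)}<\tfrac16<\tfrac13$, the interval $\big[\tfrac{2-\epsilon}{4(q-1)},\tfrac13\big)$ is nonempty; choosing $\gamma$ in it makes $2^{-4n\gamma(q-1)}\le \Delta t_n^{2-\epsilon}$, and combining with the discretization bound (enlarging $C$) proves \eqref{manba}.

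The main obstacle is the fourth-moment discrete Gronwall estimate for the simplified scheme: one must organize the stochastic-Taylor remainder carefully enough that the $2^n$ conditionally-centered source contributions accumulate at the martingale rate $O(\sqrt{2^n}\,\Delta t_n)=O(\Delta t_n^{1/2})$ in $L^2$, not the trivial rate $O(2^n\Delta t_n)$, propagate this through the merely Lipschitz coefficients $\mu,\sigma,\partial\sigma/\partial x_l$, and --- crucially --- keep the Gronwall constant in the explicit exponential form $e^{CL_1}$. This last point is what makes the bound usable once $\mu$ is replaced by a realization of the random field and $L_1$ by the random bounding number $\boldsymbol L_1$: the factor $e^{C\boldsymbol L_1}$ is then controlled through $\mathbb{E}(e^{t\boldsymbol L_1})<\infty$ from Lemma~\ref{assump1}, whereas a careless application of Gronwall (as in the standard references) would leave a non-integrable term --- precisely the difficulty flagged in the remark preceding this lemma. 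Passing from second to fourth moments and from the grid to continuous time are comparatively routine, and are where the harmless $\epsilon$ provides the needed room.
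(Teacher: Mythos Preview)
Your decomposition into a field--truncation term and a discretization term is exactly the paper's, and your treatment of the truncation term $\|X_n-\bar X_n\|$ via a careful fourth-moment discrete Gronwall is essentially the paper's Statement~(I)/(II) argument.

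Where you diverge is the discretization term $\|\bar X_n-X\|$. The paper does \emph{not} use Gronwall here: it invokes a pathwise rough-path estimate (Lemma~\ref{note lemma4}) giving $\|\bar X_n(t)-X_t\|_\infty\le\mathcal{P}(L_1,\|B\|_\alpha,\|A\|_{2\alpha},\Gamma_{\widetilde R})\,\Delta t_n^{2\alpha-\beta}$, a deterministic bound in which $L_1$ enters only polynomially; the fourth moment then picks up only $\mathcal{P}'(L_1)\le e^{CL_1}$. Your alternative is to run the same martingale-aware Gronwall that the paper uses for the truncation term on this term as well. That can be made to work, precisely because $\sigma$ carries only the deterministic constant $L$ while $L_1$ enters only through $\mu$: in the $\mathbb{E}\xi_k^3\eta_k$ cross term the conditionally centered $\sigma$-mismatches vanish and the surviving drift mismatch contributes $L_1\,\Delta t_n\,\mathbb{E}\|\xi_k\|^4$, yielding the linear exponent. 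But note two points you glossed over. First, the $e^{CL_1}$ factor comes from the \emph{predictable} drift mismatch $\mu(\bar X_n)-\mu(X)$, not from your type-(ii) $\sigma$-mismatch (which carries only $L$); you have slightly mislocated the source of the random constant. Second, the higher cross terms $\mathbb{E}\xi_k^2\eta_k^2$ etc.\ produce contributions like $L_1^2\Delta t_n^2\,\mathbb{E}\|\xi_k\|^4$, which force the same large-$n$/small-$n$ case split the paper carries out for the truncation term; you do not mention this, and without it the Gronwall exponent is not linear in $L_1$ uniformly in $n$.

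What each approach buys: your route, if completed, is more elementary and avoids the rough-path machinery entirely for this lemma. The paper's route gives a stronger pathwise bound (polynomial in $L_1$) with no case split, and the same rough-path estimate is reused verbatim in Lemma~\ref{lcyes}, so the machinery is not redundant in the paper's overall architecture.
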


\begin{proof}
	The proof is in Section \ref{sec:proof}.
\end{proof}

\begin{coro}
	\label{expectationconverge} Under the assumptions of Lemma~\ref%
	{orderfirstlemma} and Assumption \ref{assump:2}, we have 
	\begin{equation}  \label{meanconvergence}
	\lim_{n \to \infty}\mathbb{E} f(X_n(1))=\mathbb{E}%
	f(X_1).
	\end{equation}
\end{coro}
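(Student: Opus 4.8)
The plan is to deduce \eqref{meanconvergence} directly from the fourth--moment estimate in Lemma~\ref{orderfirstlemma} combined with the Lipschitz continuity of $f$ furnished by Assumption~\ref{assump:2}. First I would observe that, since $\|\partial f/\partial x_i\|_{\infty} < L$ for $1\leq i\leq d$, the mean value theorem together with the equivalence of the $\ell^2$ and $\ell^\infty$ norms on $\mathbb{R}^d$ gives $|f(y)-f(z)|\leq \sqrt{d}\,L\,\|y-z\|_{\infty}$ for all $y,z\in\mathbb{R}^d$; in particular $f$ is globally Lipschitz. Hence, for every $n\geq 0$,
\[
\big|\mathbb{E} f(X_n(1))-\mathbb{E} f(X_1)\big|\;\leq\;\mathbb{E}\big|f(X_n(1))-f(X_1)\big|\;\leq\;\sqrt{d}\,L\,\mathbb{E}\|X_n(1)-X_1\|_{\infty}.
\]

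Next I would pass from the first moment on the right to the fourth moment by Jensen's inequality (convexity of $x\mapsto x^4$ on $[0,\infty)$): $\mathbb{E}\|X_n(1)-X_1\|_{\infty}\leq\big(\mathbb{E}\|X_n(1)-X_1\|_{\infty}^4\big)^{1/4}$. Applying Lemma~\ref{orderfirstlemma} at $t=1$, with the constants $\epsilon\in(0,\epsilon')$, $\gamma\in(0,\tfrac13)$ and $C>1$ it provides (and noting $2-\epsilon>0$), yields $\mathbb{E}\|X_n(1)-X_1\|_{\infty}^4\leq e^{CL_1}\,\Delta t_n^{\,2-\epsilon}$, so that
\[
\big|\mathbb{E} f(X_n(1))-\mathbb{E} f(X_1)\big|\;\leq\;\sqrt{d}\,L\,e^{CL_1/4}\,\Delta t_n^{\,(2-\epsilon)/4}.
\]
Because $\mu\in\mathcal{L}_1$ is fixed, $L_1$ is a finite bounding number and the prefactor $\sqrt{d}\,L\,e^{CL_1/4}$ is a constant independent of $n$; since $\Delta t_n=2^{-n}\to 0$ and $(2-\epsilon)/4>0$, the right--hand side tends to $0$ as $n\to\infty$, which is exactly \eqref{meanconvergence}.

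There is no real obstacle here: the corollary is a routine consequence of the (hard) Lemma~\ref{orderfirstlemma}. The only points needing a little care are (i) extracting a usable Lipschitz constant for the scalar--valued $f$ from the coordinatewise derivative bounds in Assumption~\ref{assump:2}, which is just the mean value theorem plus norm equivalence on $\mathbb{R}^d$; and (ii) ensuring that the exponent $(2-\epsilon)/4$ is strictly positive, which holds by the choice $\epsilon<\epsilon'$ with, in particular, $\epsilon<2$. Both are immediate, so the proof is short.
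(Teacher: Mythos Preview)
Your argument is correct and essentially identical to the paper's own proof: both use the Lipschitz bound on $f$ from Assumption~\ref{assump:2} to control $|\mathbb{E}f(X_n(1))-\mathbb{E}f(X_1)|$ by a moment of $\|X_n(1)-X_1\|_{\infty}$, and then invoke Lemma~\ref{orderfirstlemma} to send it to zero. The only cosmetic difference is that the paper passes through the second moment via Cauchy--Schwarz (bounding $\mathbb{E}\|X_n(1)-X_1\|_{\infty}^2$ by $(\mathbb{E}\|X_n(1)-X_1\|_{\infty}^4)^{1/2}$), whereas you go from the first moment to the fourth via Jensen; your handling of the Lipschitz constant with the explicit $\sqrt{d}$ factor is in fact slightly more careful than the paper's.
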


\begin{proof}
	It follows from \ref{orderfirstlemma}, Assumption \ref{assump:2} and Cauchy-Schwarz inequality that 
	\begin{equation}\label{add1}
	\mathbb{E}|{f(X_n(1))-f(X_1)}|^2\leq L^2 \mathbb{E} \|X_n(1)-X_1\|_{\infty}^2\leq L^2  \sqrt{\mathbb{E}\|X_n(1)-X_1\|^4_{\infty}}.
	\end{equation}
	The quantity would converge to 0 as $n$ goes to infinity by Lemma~\ref{orderfirstlemma}.
\end{proof}

\begin{remark}
	For ${\mu}\in\mathcal{L}_1$ with a bounding number $L_1$, the results in \cite{giles2014} typically would show that $\mathbb{E}\Vert
	X_{n}(t)-X_{t}\Vert _{\infty }^{4}=O(\Delta t_{n}^{2})$, a standard error bound for numerical SDE based on Gronwall's inequality \cite{howard1998gronwall, kloeden2011numerical}. In particular, the bound has the form 
	\begin{equation}  \label{illstruuu}
	\mathbb{E}\|X_{n}(t)-X_t\|_{\infty} ^4\leq e^{C
		L_1^4}\Delta t_n ^{2},
	\end{equation}
	for some constant $C$. However, in our problem $\boldsymbol{\mu} $ is random and $%
	e^{\boldsymbol{L}_1^p}$ is not guaranteed to have a finite expectation for $p>1$. Thus, instead of using Gronwall's inequality, we explore the rough
	paths technique in \cite{blanchet2017} to develop an original bound as in
	\eqref{manba}, where we substitute the term $e^{C L_1^p}$ by $e^{CL_1}$ by
	giving up $\epsilon$ order from $\Delta t_n^2$ in \eqref{illstruuu}.
\end{remark}

\subsubsection{Construction and Properties of $Z(\mu)$}
\begin{defn}[{Construction of $Z(\mu)$}]
	Fixing $\mu\in\mathcal{L}_1$ and $\{{\mu}^{(n)}\}_{n\geq 1}\subset\mathcal{L}_1$, let $N \sim
	Geom(1-2^{-\theta}), N\geq 0$ be an  geometric random variable with $p_n\triangleq\mathbb{P}%
	(N=n)=(1-2^{-\theta})(2^{-\theta n})$ for some $\theta>0$. We defined $Z(\mu)$ to be:
	\begin{equation}  \label{eq:Z}
	Z(\mu) \triangleq f(X_{n_0}(1))+\frac{\Delta_{N+n_0}}{p_{N}},
	\end{equation}
	where $n_0 \geq 0$, $X_{n_0}(\cdot)$ is the base level estimator and $  \Delta_n$ is defined in \eqref{eq:Delta_given_b}.
\end{defn}
The choice of $\theta$ and $\gamma$ are specified in Section~\ref{sec:proof}. 
\makeatletter
\def\BState{\State\hskip-\ALG@thistlm}
\makeatother

\begin{algorithm}[H]\label{alg:z-generator}
	\caption{Generate  $Z(\mu)$ given $\theta$ and $\gamma$.}\label{euclid}
	\begin{algorithmic}[1]
		\Procedure{Unbiased\_Z}{$x, n_0$} with input $x \in \mathbb R^d$ and $n_0 \geq 0$. 
		\State $\textit{Generate }    N \gets Geom (1-2^{-\theta}), \text { and }   \boldsymbol V_i \gets \mathcal N(0, \Sigma_i)   \text{ for } 1 \leq i \leq \lfloor 2^{(N+n_0+1) \gamma} \rfloor. $ 
		\State ${\mu}^{(N+n_0+1)} \gets \sum_{i=1}^{\lfloor 2^{(N+n_0+1) \gamma} \rfloor}\frac{\lambda_i}{i^q} \boldsymbol{V}_i  \phi_i(\cdot) \text { and }{\mu}^{(N+n_0)} \gets \sum_{i=1}^{\lfloor 2^{(N+n_0) \gamma} \rfloor}\frac{\lambda_i}{i^q} \boldsymbol{V}_i  \phi_i(\cdot),$
		\State ${\mu}^{(n_0)} \gets \sum_{i=1}^{\lfloor 2^{(n_0) \gamma} \rfloor}\frac{\lambda_i}{i^q} \boldsymbol{V}_i  \phi_i(\cdot).$
		\For {$0 \leq k \leq 2^{n_0}-1$ }
		\State {$\Delta B^{n_0}_k\gets B(t^{n_0+1}_{k+1})-B(t^{n_0}_k)$},
		\EndFor
		\State {$X_{n_0}(\cdot) \gets Num\_Sol (x,n_0,\{\Delta B_k^{n_0} \}_{1\leq k \leq 2^{n_0}-1})$}
		\State {$\Delta_{N+n_0}\gets Delta\_Gen (x,N+n_0) \text{ and } p_N \gets (1-2^{-\theta})(2^{-\theta N})$},
		\State {$\textbf{Output } Z(\mu) \gets \frac{\Delta_{N+n_0}}{p_{N}}+f(X_{n_0}(t))$}
		\EndProcedure
	\end{algorithmic}
\end{algorithm}
\begin{remark}
	In practice, a larger value of $n_0$ gives lower variance of $Z$ at the cost of a 
	higher computational budget. Notice we also use the same Brownian path to generate $X_{n_0}(1)$ and $%
	\Delta_{N+n_0}$ in \eqref{eq:Z} to reduce the computational cost. 
\end{remark}
We now present several important properties of $Z$ starting with the unbiasedness.
\begin{lemma}
	\label{ieieieieieieie} Under the assumptions of Lemma~\ref%
	{orderfirstlemma} as well as Assumption \ref{assump:2} , we have
	\begin{equation*}
	\mathbb{E}[Z(\mu)]=u(x,1).
	\end{equation*}
\end{lemma}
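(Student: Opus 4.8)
The plan is to show $\mathbb{E}[Z(\mu)] = u(x,1)$ by exploiting the telescoping structure of the antithetic MLMC increments together with the randomization over the level $N$. First I would compute the conditional expectation of the randomized term given $N = n$: by construction $Z(\mu) = f(X_{n_0}(1)) + \Delta_{N+n_0}/p_N$, so
\begin{equation*}
\mathbb{E}[Z(\mu)] = \mathbb{E}[f(X_{n_0}(1))] + \sum_{n=0}^{\infty} p_n \cdot \frac{1}{p_n}\,\mathbb{E}[\Delta_{n+n_0}] = \mathbb{E}[f(X_{n_0}(1))] + \sum_{n=0}^{\infty} \mathbb{E}[\Delta_{n+n_0}],
\end{equation*}
provided the interchange of expectation and summation is justified. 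Then I would invoke the lemma preceding this one, which gives $\mathbb{E}\Delta_m = \mathbb{E} f(X_{m+1}(1)) - \mathbb{E} f(X_m(1))$ for each $m \geq 0$, so the partial sums telescope: $\sum_{n=0}^{M} \mathbb{E}[\Delta_{n+n_0}] = \mathbb{E} f(X_{M+n_0+1}(1)) - \mathbb{E} f(X_{n_0}(1))$.

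Combining these, $\mathbb{E}[Z(\mu)] = \lim_{M\to\infty} \mathbb{E} f(X_{M+n_0+1}(1))$, and by Corollary~\ref{expectationconverge} this limit equals $\mathbb{E} f(X_1) = u(x,1)$ by the Feynman–Kac representation in Theorem~\ref{fkt}. So the skeleton of the argument is: conditioning on $N$, telescoping via the previous lemma, and taking the limit via Corollary~\ref{expectationconverge}.

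The main obstacle is the justification of the interchange of expectation and infinite summation (equivalently, applying Fubini/dominated convergence to $\sum_n \Delta_{n+n_0}/p_N \cdot \mathbbm{1}\{N=n\}$). For this I would need a summability estimate of the form $\sum_n \mathbb{E}|\Delta_{n+n_0}| < \infty$, or more robustly an $L^1$ (ideally $L^2$) bound $\mathbb{E}|\Delta_m|^2 \leq C\,\Delta t_m^{2-\epsilon}$ that decays geometrically in $m$. This is exactly where Lemma~\ref{orderfirstlemma} enters: writing $\Delta_m = \tfrac12(f(X_{m+1}^f(1)) - f(X_m(1))) + \tfrac12(f(X_{m+1}^a(1)) - f(X_m(1)))$ and using the Lipschitz bound on $f$ from Assumption~\ref{assump:2} together with the fourth-moment estimate $\mathbb{E}\|X_n(t) - X_t\|_\infty^4 \leq e^{CL_1}\Delta t_n^{2-\epsilon}$ (and the triangle inequality through the true solution $X_t$) yields $\mathbb{E}|\Delta_m| \leq C' e^{C L_1/2} \Delta t_m^{1 - \epsilon/2}$, which is summable since $\Delta t_m = 2^{-m}$. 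I should be a little careful that the antithetic path $X_{m+1}^a$ also obeys the same moment bound — this follows since it is itself an output of \textbf{Num\_Sol} with a relabelled (but equidistributed) set of Brownian increments, so Lemma~\ref{orderfirstlemma} applies verbatim to it.

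One should also note that $\mu$ is \emph{fixed} throughout this lemma (the randomization over $\boldsymbol{\mu}$ is handled later in the construction of $W$), so $L_1$ is a deterministic constant here and there is no issue with $e^{CL_1}$ being infinite; the reason the refined bound \eqref{manba} rather than \eqref{illstruuu} is needed only becomes essential when $\mu$ is randomized. Hence for this lemma even the classical Gronwall-type bound would suffice, but using Lemma~\ref{orderfirstlemma} keeps the exposition uniform. I would close by remarking that the same Lipschitz-plus-moment estimate shows each $\mathbb{E}|\Delta_m|^2$ is finite, so all expectations written above are well-defined.
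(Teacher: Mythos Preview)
Your proposal is correct and follows essentially the same approach as the paper: condition on $N$, use the telescoping identity $\mathbb{E}\Delta_m = \mathbb{E}f(X_{m+1}(1)) - \mathbb{E}f(X_m(1))$, and pass to the limit via Corollary~\ref{expectationconverge}. In fact your treatment is more careful than the paper's, which omits any explicit justification of the Fubini/interchange step that you supply via Lemma~\ref{orderfirstlemma} and the Lipschitz bound on $f$.
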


\begin{proof}
	It follows from the definition of $Z$ that we know $\mathbb{E}[Z(\mu)]$ is equal to
	\begin{align*}
	\mathbb{E}f(X_{n_0}(1))+\mathbb{E}\frac{\Delta_{N+n_0}}{p_{N}}=&\mathbb{E}f(X_{n_0}(1))+\mathbb{E}_{N}[\mathbb{E}[\frac{\Delta_{N+n_0}}{p_{N}}|N]]\nonumber\\
	=&\mathbb{E}f(X_{n_0}(1))+\sum_{n=0}^{\infty}\frac{\mathbb{E}\Delta_{n+n_0}}{p_{n}}\cdot p_n\nonumber\\
	=& \lim_{n \to \infty}\mathbb{E} f(X_n(1))=\mathbb{E}f(X_1).
	\end{align*}
	The equality follows from the independence of $N$,  Equation \eqref{eq:deltal} and  Corollary~\ref{expectationconverge}.
\end{proof}
Next, instead a variance bound on $Z(\mu)$, we again provide a fourth moment bound of $Z(\mu)$ which becomes useful for proving the finite variance property of $W(\mu)$ later on.

\begin{lemma}
	\label{finitefourthmoment} 
	Fix $\mu\in\mathcal{L}_1$, $\{{\mu}^{(n)}\}_{n\geq 1}\subset\mathcal{L}_1$ and $L_1>1$, then for any $\delta'>0$, we can find $0<\delta<\delta'$ and $C>1$ such that
	\begin{equation}  \label{ffm1}
	\mathbb{E} \Delta_n^4 \leq e^{CL_1} \Delta
	t_n^{4-\delta},
	\end{equation} 
	\begin{equation}  \label{ffm2}
	\mathbb{E} |f(X_{n_0}(1))|^4\leq \mathcal{P}(L_1),
	\end{equation}
	for some polynomial function $\mathcal{P}(\cdot)$ such that $\mathcal{P}(x) > 1$ for $x>1$.
\end{lemma}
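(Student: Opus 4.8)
The plan is to prove the two moment bounds \eqref{ffm1} and \eqref{ffm2} separately, with \eqref{ffm1} being the main task. For \eqref{ffm2}, I would argue that $f$ has bounded first and second derivatives (Assumption~\ref{assump:2}), hence is globally Lipschitz, so $|f(X_{n_0}(1))| \le |f(x)| + L\|X_{n_0}(1) - x\|_\infty$, and it suffices to bound $\mathbb{E}\|X_{n_0}(1)-x\|_\infty^4$. Since $n_0$ is a fixed base level (not random), this is a finite-step recursion \eqref{coarse}: each increment involves the bounded coefficients $\mu^{(n_0)}$ (bounded by $L_1$), $\sigma$ and $\partial\sigma/\partial x$ (bounded by $L$ via Assumption~\ref{assump:2}), the Brownian increments $\Delta B^{n_0}_k$, and the Lévy-area surrogates $\widetilde A_{mj}$. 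Taking fourth moments, using that the $\Delta B^{n_0}_k$ and $\widetilde A_{mj}$ have all moments finite (Gaussian and products/squares of Gaussians), and iterating the bound over the fixed number $2^{n_0}$ of steps, one gets $\mathbb{E}\|X_{n_0}(1)-x\|_\infty^4 \le \mathcal{Q}(L_1)$ for a polynomial $\mathcal{Q}$; combining with the Lipschitz bound gives a polynomial $\mathcal{P}(L_1)$.

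For \eqref{ffm1}, I would start from the decomposition \eqref{eq:Delta_given_b},
\begin{equation*}
\Delta_n = \tfrac12\bigl(f(X_{n+1}^f(1)) - f(X_1)\bigr) + \tfrac12\bigl(f(X_{n+1}^a(1)) - f(X_1)\bigr) - \bigl(f(X_n(1)) - f(X_1)\bigr) + \tfrac12\bigl(f(X_{n+1}^f(1)) + f(X_{n+1}^a(1))\bigr) - f(X_1) \,,
\end{equation*}
but actually the cleaner route, following \cite{giles2014}, is to write $\Delta_n = \bigl(\tfrac12(f(X_{n+1}^f(1)) + f(X_{n+1}^a(1))) - f(X_n(1))\bigr)$ and Taylor-expand $f$ around $X_n(1)$: the first-order terms in $X_{n+1}^f - X_n$ and $X_{n+1}^a - X_n$ are designed to cancel to leading order because of the antithetic relation \eqref{trihere} ($\Delta B^{n+1,a}_{2k} + \Delta B^{n+1,a}_{2k+1} = \Delta B^n_k$), leaving a remainder governed by (i) the second-order Taylor term in the increments, which is $O(\Delta t_{n+1})$ in $L^2$ after using $\mathbb{E}[\widetilde A^2] = O(\Delta t^2)$-type estimates, and (ii) the difference $X_{n+1}^f - X_n$ itself, controlled by Lemma~\ref{orderfirstlemma}. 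Raising to the fourth power, using Cauchy–Schwarz/Minkowski to split products of the small factors, and invoking Lemma~\ref{orderfirstlemma} (which already gives $\mathbb{E}\|X_n(t) - X_t\|_\infty^4 \le e^{CL_1}\Delta t_n^{2-\epsilon}$ with the crucial $e^{CL_1}$ rather than $e^{CL_1^4}$ dependence), together with the analogous bound on the antithetic pair, yields $\mathbb{E}\Delta_n^4 \le e^{CL_1}\Delta t_n^{4-\delta}$.

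The main obstacle is propagating the $\epsilon$/$\delta$ loss and the $e^{CL_1}$ (rather than $e^{CL_1^p}$) dependence through the fourth-moment computation, rather than the nominal $O(\Delta t_n^4)$ bound one would expect from \cite{giles2014}. Concretely, a naive fourth-moment analysis of the antithetic difference would reintroduce Gronwall-type constants $e^{CL_1^4}$, which is exactly what must be avoided for the outer randomization over $\boldsymbol{\mu}$ to have finite variance. The fix is to feed in the rough-path-based estimate of Lemma~\ref{orderfirstlemma} at every place where one previously used the classical strong-error bound, and to handle the antithetic cancellation path-by-path (so that the constants stay linear in $L_1$ in the exponent). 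I would carry this out by: first recording the Taylor expansion and the cancellation identity; second bounding each resulting term's fourth moment using Lemma~\ref{orderfirstlemma}, the moment bounds on $\Delta B^n_k$ and $\widetilde A_{mj}$, and the boundedness of $\sigma, \partial\sigma/\partial x, \partial^2 f$; third collecting the powers of $\Delta t_n$ to get the exponent $4-\delta$ with $\delta$ as small as desired by choosing $\epsilon$ and $\gamma$ small in Lemma~\ref{orderfirstlemma}; and finally verifying the $e^{CL_1}$ dependence survives. The bound \eqref{ffm2} is then the easy finite-horizon estimate described above.
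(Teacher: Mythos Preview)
Your plan for \eqref{ffm2} is fine and matches the paper's approach: bound $\sup_k |X_{i,n}(t_k^n)|^4$ from the recursion \eqref{coarse} using boundedness of $\mu^{(n)},\sigma,\partial\sigma$ together with (in the paper, Burkholder--Davis--Gundy) moment bounds on the Brownian and $\widetilde A$ increments, then use the Lipschitz property of $f$.

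For \eqref{ffm1} there is a real gap. Your starting point---Taylor-expand $f$ so that, as in \cite{giles2014},
\[
|\Delta_n|^4 \;\lesssim\; \Bigl\|\tfrac12(X_{n+1}^f(1)+X_{n+1}^a(1))-X_n(1)\Bigr\|_\infty^4 \;+\; \|X_{n+1}^f(1)-X_{n+1}^a(1)\|_\infty^{8}
\]
---is the same as the paper's \eqref{gileslemma}. The problem is your plan to bound the two pieces by ``feeding in Lemma~\ref{orderfirstlemma}''. That lemma gives $\mathbb{E}\|X_n(t)-X_t\|_\infty^4 \le e^{CL_1}\Delta t_n^{2-\epsilon}$, and a triangle-inequality route through $X_t$ yields only $\mathbb{E}\|\bar X_{n+1}(1)-X_n(1)\|_\infty^4 \le e^{CL_1}\Delta t_n^{2-\epsilon}$, which is two orders short of the required $\Delta t_n^{4-\delta}$. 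The antithetic cancellation you mention does not act at the level of $f$'s first-order term ``cancelling to zero''; it acts at the level of the \emph{scheme}: the average $\bar X_{n+1}=\tfrac12(X_{n+1}^f+X_{n+1}^a)$ satisfies a perturbed version of the coarse recursion \eqref{coarse} over steps $\Delta t_n$, with a remainder $R_{i,n,k}$ built from many cross terms (the paper's \eqref{barbar}--\eqref{REINK}). The heart of the paper's proof is a \emph{new} induction, parallel to but more involved than that in Lemma~\ref{orderfirstlemma}, showing $\mathbb{E}|\bar X_{i,n+1}(t_k^n)-X_{i,n}(t_k^n)|^4 \le e^{CL_1 t_k^n}\Delta t_n^{4-\delta}$, where each $R_{i,n,k}$ contribution is bounded via Taylor expansions together with Lemma~\ref{note lemma3} and Lemma~\ref{lcyes}, and the exponent in $L_1$ is kept linear step by step. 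Lemma~\ref{orderfirstlemma} is not cited inside this argument at all; the $e^{CL_1}$ dependence must be \emph{re-earned} for the new quantity.

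Secondly, the other piece $\mathbb{E}\|X_{n+1}^f(1)-X_{n+1}^a(1)\|_\infty^{8}$ is an eighth moment, outside the scope of Lemma~\ref{orderfirstlemma}. The paper handles it via Lemma~\ref{lcyes}, whose proof couples $X_{n+1}^f$ and $X_{n+1}^a$ through the L\'evy--Ciesielski construction to a common Brownian path and then applies the \emph{pathwise} rough-path bound of Lemma~\ref{note lemma4}; because that bound is pathwise, raising to the eighth power still gives $\mathcal{P}(L_1)\Delta t_n^{8(2\alpha-\beta)}$. Your proposal does not account for this separate ingredient.
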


\begin{proof}
	The proof is in Section \ref{sec:proof}.
\end{proof}

\begin{lemma}
	\label{finitefourthmoment2} Under the assumptions of Lemma~\ref
	{finitefourthmoment}, if $\theta$ in the definition of $Z$ satisfies $3\theta<4-\delta $ for the $\delta$ in Lemma~\ref{finitefourthmoment},
	then the estimator $Z$ defined in \eqref{eq:Z}
	satisfies 
	\begin{equation}
	\mathbb{E}[Z^4(\mu)] \leq e^{CL_1},
	\end{equation}
	for some constant $C>1$.
\end{lemma}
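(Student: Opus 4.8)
The plan is to expand $Z^4(\mu)$ using the elementary inequality $(a+b)^4 \leq 8a^4 + 8b^4$ (or more generally $(a+b)^4 \leq 2^3(a^4+b^4)$), applied to the decomposition $Z(\mu) = f(X_{n_0}(1)) + \Delta_{N+n_0}/p_N$ from \eqref{eq:Z}. This reduces the task to bounding $\mathbb{E}|f(X_{n_0}(1))|^4$ and $\mathbb{E}[\Delta_{N+n_0}^4/p_N^4]$ separately. The first term is already controlled by \eqref{ffm2} in Lemma~\ref{finitefourthmoment}: it is at most $\mathcal{P}(L_1)$, and since $\mathcal{P}$ is a polynomial with $\mathcal{P}(x)>1$ for $x>1$, we have $\mathcal{P}(L_1) \leq e^{C_0 L_1}$ for a suitable constant $C_0>1$ (as a polynomial is dominated by an exponential). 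So this contributes a term of the required form.

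The second term is the crux. Conditioning on $N$ and using the independence of $N$ from the Brownian path and the Gaussian vectors, I would write
\begin{equation*}
\mathbb{E}\left[\frac{\Delta_{N+n_0}^4}{p_N^4}\right] = \sum_{n=0}^{\infty} \frac{\mathbb{E}[\Delta_{n+n_0}^4]}{p_n^4}\, p_n = \sum_{n=0}^{\infty} \frac{\mathbb{E}[\Delta_{n+n_0}^4]}{p_n^3}.
\end{equation*}
Now I plug in the bound \eqref{ffm1}, $\mathbb{E}[\Delta_{n+n_0}^4] \leq e^{CL_1}\Delta t_{n+n_0}^{4-\delta} = e^{CL_1} 2^{-(n+n_0)(4-\delta)}$, together with $p_n = (1-2^{-\theta})2^{-\theta n}$, so that $p_n^3 = (1-2^{-\theta})^3 2^{-3\theta n}$. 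The summand becomes a constant multiple of $e^{CL_1} 2^{-n_0(4-\delta)} \cdot 2^{-n(4-\delta) + 3\theta n} = e^{CL_1}2^{-n_0(4-\delta)} \cdot 2^{-n(4-\delta-3\theta)}$. The hypothesis $3\theta < 4-\delta$ guarantees the exponent $4-\delta-3\theta$ is strictly positive, so the geometric series $\sum_n 2^{-n(4-\delta-3\theta)}$ converges to a finite constant depending only on $\theta$ and $\delta$. Hence this term is bounded by $e^{CL_1}$ times a constant (absorbing the $n_0$-dependent and series constants, which do not depend on $\mu$ or $L_1$), which again has the required form $e^{C'L_1}$ after adjusting the constant.

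Combining the two pieces via $(a+b)^4 \leq 8(a^4+b^4)$ and noting that a sum of two terms each of the form (constant)$\cdot e^{c_i L_1}$ is again bounded by $e^{C L_1}$ for $C = \max_i c_i + \log(\text{constant})$ and $L_1 > 1$, yields $\mathbb{E}[Z^4(\mu)] \leq e^{CL_1}$ as claimed. The main obstacle — really the only nontrivial point — is making sure the geometric series in the $\Delta$-term converges, which is precisely where the assumption $3\theta < 4-\delta$ enters; everything else is bookkeeping to check that all absorbed constants are independent of $\mu$ (they depend only on $n_0$, $\theta$, $\delta$, and the fixed polynomial $\mathcal{P}$).
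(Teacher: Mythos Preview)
Your proposal is correct and follows essentially the same approach as the paper: split $Z(\mu)$ via $(a+b)^4\le 8(a^4+b^4)$, bound $\mathbb{E}|f(X_{n_0}(1))|^4$ by \eqref{ffm2}, condition on $N$ to reduce $\mathbb{E}[\Delta_{N+n_0}^4/p_N^4]$ to $\sum_n \mathbb{E}[\Delta_{n+n_0}^4]/p_n^3$, apply \eqref{ffm1}, and sum the resulting geometric series using $3\theta<4-\delta$. The only cosmetic difference is that you keep track of the $n_0$-dependent factor $2^{-n_0(4-\delta)}$ explicitly, whereas the paper absorbs it directly into the final constant.
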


\begin{proof}
	The elementary inequality 
	\begin{equation}\label{elementary inequality}
	|{\sum_{n=1}^N a_n}|^p \leq N^{p-1}\sum_{n=1}^N |{a_n}|^p,
	\end{equation}
	shows that $\mathbb{E}Z^4(\mu) $ is bounded by $8\mathbb{E}\frac{\Delta_{N+n_0}^4}{p_{N}^4}+8\mathbb{E} |f(X_{n_0}(1))|^4$ and
	\begin{align*}
	8\sum_{n=0}^{\infty}\frac{\mathbb{E}\Delta_{N+n_0}^4}{p_{n}^3}+8\mathbb{E} |f(X_{n_0}(1))|^4 \leq 8\big(\frac{e^{CL_1}}{(1-2^{-\theta})^3} \sum_{n=0}^{\infty}\frac{\Delta t_n^{4-\delta}}{\Delta t_n^{3\theta}}+\mathcal{P}(L_1)\big)
	\leq  e^{C'L_1}. 
	\end{align*}
	for some constant $C'>1$ chosen appropriately. In particular, the last inequality follows from $4-\delta >3\theta$, $L_1>1$ and the fact that  $\mathcal{P}(|x|)<e^{cx}$ for some appropriately chosen $c$ if $x>1$. The second inequality follows from Lemma~\ref{finitefourthmoment}. 
\end{proof}
Finally, we show that $Z(\mu)$ has finite expected computational cost. Formally, if we use ${%
	cost^{Z}}$ to denote the computational cost for generating $Z$ and $cost_n$ for $X_n(1)$, then 
\begin{equation}  \label{eq:total-cost}
{cost_{Z}} = cost_{n_0}+cost_{N+n_0} + 2 cost_{N+n_0+1},
\end{equation}
for the computation of $X_{n_0}(1),X_{N+n_0}(1),X^f_{N+n_0+1}$ and $X^a_{N+n_0+1}$ in $Z$. 
\begin{lemma}
	\label{computecost} 
	Let $\theta $ and $\gamma $ be chosen so that $\theta>1+\gamma$, then the
	computational cost for generating $Z$ has a finite expectation. That is, 
	\begin{equation}
	\mathbb{E}({cost}_Z )<\infty .
	\end{equation}
\end{lemma}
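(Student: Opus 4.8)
The plan is to reduce everything to a deterministic per-level cost bound and then average over the geometric index $N$. First I would establish that a single call to \textbf{Num\_Sol} at level $n$ (from any starting point) costs at most $C_{d,d'}\,2^{n(1+\gamma)}$ for a constant $C_{d,d'}$ depending only on the dimensions $d,d'$. Indeed, the recursion \eqref{coarse} has $2^n$ time steps, and at each step the dominant contribution is the evaluation of $\mu^{(n)}(\cdot)=\sum_{i=1}^{\lfloor 2^{n\gamma}\rfloor}\frac{\lambda_i}{i^q}\boldsymbol{V}_i\psi_i(\cdot)$, which takes $O(d\,2^{n\gamma})$ operations, while the $\sigma$-terms contribute only an $O(d\,d'^2)$ cost per step that is independent of $n$. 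Adding the cost of sampling the $2^n$ (resp. $2^{n+1}$) Brownian increments, which is $O(d'2^n)$, and of drawing the $\lfloor 2^{n\gamma}\rfloor$ Gaussian vectors $\boldsymbol{V}_i\sim\mathcal N(0,\Sigma_i)$, which is $O(d^2 2^{n\gamma})$, every one of these contributions is dominated by $C_{d,d'}2^{n(1+\gamma)}$; hence $cost_n\le C_{d,d'}2^{n(1+\gamma)}$.

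Next I would invoke the exact accounting in \eqref{eq:total-cost}, namely $cost_Z = cost_{n_0}+cost_{N+n_0}+2\,cost_{N+n_0+1}$, which already reflects that \textbf{Delta\_Gen} at level $N+n_0$ runs \textbf{Num\_Sol} once at level $N+n_0$ and twice at level $N+n_0+1$. The term $cost_{n_0}$ is deterministic and bounded by $C_{d,d'}2^{n_0(1+\gamma)}<\infty$. For the remaining two terms I would condition on $N$ and combine the per-level bound with $\mathbb P(N=n)=p_n=(1-2^{-\theta})2^{-\theta n}$:
\begin{equation*}
\mathbb{E}[cost_{N+n_0}] \le C_{d,d'}\sum_{n=0}^{\infty} p_n\, 2^{(n+n_0)(1+\gamma)} = C_{d,d'}\,2^{n_0(1+\gamma)}(1-2^{-\theta})\sum_{n=0}^{\infty} 2^{n(1+\gamma-\theta)},
\end{equation*}
and similarly for $\mathbb{E}[cost_{N+n_0+1}]$ with $n_0$ replaced by $n_0+1$. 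The geometric series $\sum_{n\ge 0}2^{n(1+\gamma-\theta)}$ has common ratio $2^{1+\gamma-\theta}$, which is strictly less than $1$ precisely because the hypothesis $\theta>1+\gamma$ forces $1+\gamma-\theta<0$; hence both expectations are finite, and linearity of expectation applied to \eqref{eq:total-cost} yields $\mathbb{E}(cost_Z)<\infty$.

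The only delicate point — and the place I would be most careful — is the per-level bookkeeping: one must check that \emph{all} sources of randomness consumed in a single $Z$-draw (the Gaussian field coefficients $\boldsymbol{V}_i$, the Brownian path, and the common-random-numbers coupling of the coarse, fine, and antithetic trajectories) are generated at total cost $O(2^{n(1+\gamma)})$ at level $n$, and that reusing the same Brownian path across $X_{n_0}(1)$ and $\Delta_{N+n_0}$ does not change this order. Once the $2^{n(1+\gamma)}$ per-level bound is secured, the rest is the elementary geometric-series computation above, with the threshold $\theta>1+\gamma$ being exactly what makes the ratio $2^{1+\gamma}\cdot 2^{-\theta}$ summable.
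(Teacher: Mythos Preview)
Your argument is correct and follows essentially the same route as the paper: establish the per-level bound $cost_n\le C\,2^{(1+\gamma)n}$ from the $2^n$ recursion steps each dominated by the $O(2^{\gamma n})$ evaluation of $\mu^{(n)}$, then plug into the decomposition \eqref{eq:total-cost} and sum the resulting geometric series $\sum_{n\ge 0}2^{(1+\gamma-\theta)n}$, which converges precisely when $\theta>1+\gamma$. Your bookkeeping of the subsidiary costs (Brownian increments, Gaussian vectors, $\sigma$-terms) is a bit more explicit than the paper's, but the structure and the key inequality are identical.
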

\begin{proof}
	Consider the  ${cost_n}$ for generating $X_n(1)$. For fixed $n$, we need to generate $2^n$ Brownian increments and $2^{\gamma n}$ of $\boldsymbol{V}_i$ for $\boldsymbol{\mu}^{(n)}$. Then, to compute $X_n(1)$, we need to $2^n$  recursions to obtain $X_n(1)$ from start $x$ and each iteration requires $O(2^{\gamma n})$ computation on $\phi_1(X_n(t^n_k)),...,\phi_{2^{\lfloor \gamma n \rfloor}} (X_n(t^n_k))$ in evaluating $\boldsymbol{\mu}^{(n)}(X_n(t^n_k))$. Thus $cost_n$ satisfies 
	\begin{equation}
	{cost_n}=O(2^{(1+\gamma)n}) \leq C2^{(1+\gamma)n},
	\end{equation} 
	for some constant $C$.
	Therefore, from \eqref{eq:total-cost} and $p_n \leq 2^{-\theta n}$, we bound $\mathbb{E}({cost_Z})$ by
	\begin{align}\label{cost111}
	&\mathbb{E}(cost_{n_0})+\mathbb{E}(cost_{N+n_0})+2\mathbb{E}(cost_{N+n_0+1}) \nonumber\\
	\leq& C 2^{(1+\gamma) n_0}(1+\sum_{n=0}^\infty 2^{(1+\gamma-\theta)n}+2^{1+\gamma}\sum_{n=0}^\infty 2^{(1+\gamma-\theta)n})<\infty
	\end{align}
	due to the assumption $\theta>1+\gamma$.
\end{proof}

\subsection{Step 2: Unbiased Estimator $W(\mu)$ for $G(\mathbb{E}[Z_{1}(\mu)],...,%
	\mathbb{E}[Z_k(\mu)])$}

\subsubsection{Construction of $W(\mu)$}

After the construction of $Z(\mu)$, we construct $W(\mu)$ that 
\begin{equation}
\mathbb{E}%
W(\mu)=G(\mathbb{E} Z_1(\mu), ... ,\mathbb{E}Z_k(\mu)) ,
\end{equation}
with method recently developed in \cite{blanchet2015unbiased}. For the ease of presentation, we only
construct unbiased estimators $W(\mu)$ for $G(\mathbb{E}%
(Z(\mu))$ for one dimensional $G(\cdot):\mathbb{R}\rightarrow \mathbb{R}$. The case for $G(\cdot):\mathbb{R}^{k}\rightarrow \mathbb{R}$ can be generalized and we leave the details in Algorithms.

\begin{defn}[{Construction of $W(\mu)$}]
	Fixing $\mu\in\mathcal{L}_1$ and $\{{\mu}^{(n)}\}_{n\geq 1}\subset\mathcal{L}_1$, let $\{Z_{j}(\mu)\}_{j \geq 1} $ be I.I.D. copies of random variables $Z(\mu)$. Define $\widetilde{\Delta}_n$ to be
	\begin{equation}  \label{finaltri}
	\widetilde{\Delta}_n\triangleq G(\frac{\sum_{j=1}^{2^{n+1}} Z_{j}(\mu)}{2^{n+1}})-%
	\frac{1}{2}\Big(G(\frac{\sum_{j=1}^{2^{n}} Z_{j}(\mu)}{2^{n}})+G(\frac{%
		\sum_{j=2^n +1}^{2^{n+1}} Z_{j}(\mu)}{2^{n}})\Big).
	\end{equation}
	Then, fix $n_1 \geq 1$ and define  $W(\mu)$ to be
	\begin{equation}  \label{finalw}
	W(\mu)=\frac{\widetilde{\Delta}_{\widetilde {N}+n_1}}{\widetilde{p}_{\widetilde{N}%
	}}+G(\frac{\sum_{j=1}^{2^{n_1}} Z_{j}(\mu)}{2^{n_1}}),
	\end{equation}
	where $\widetilde{N} \sim Geom(1-2^{-1.5})$ with $%
	\widetilde{p}_{n}\triangleq\mathbb{P}(\widetilde{N}=n)=2^{-1.5n}(1-2^{-1.5})$
	.
\end{defn}

\begin{algorithm}[H]
	\caption{Generate $\rho(\cdot)$ given $\{ Z_{ij}\}$.}\label{euclid1.5}
	\begin{algorithmic}[1]
		\Procedure{$\rho$}{$a,b$} with integers $a<b$. 
		\State $\rho(a,b) \gets G\bigg(\frac{1}{b-a+1}{\sum\limits_{j=a}^{b}Z_{1j}},...,\frac{1}{b-a+1}{\sum\limits_{j=a}^{b}Z_{kj}}\bigg)$
		\EndProcedure
	\end{algorithmic}
\end{algorithm}
\begin{algorithm}[H]\label{alg:w-generator}
	\caption{Generate  $W(\mu)$ given $\theta$ and $\gamma$.}\label{euclid2}
	\begin{algorithmic}[1]
		\Procedure{Unbiased\_W}{$x_1,...,x_k, n_0,n_1$} with input $x_i \in \mathbb R^d$ for $1 \leq i \leq k$, $n_0 \geq 0$ and $n_1>0$. 
		\State $\textit{Generate }    \widetilde N \gets Geom (1-2^{-1.5})$
		\For {$1 \leq i \leq k$}
		\For {$1 \leq j \leq 2^{N+n_1+1}$}
		\State $\textit{ Generate }  Z_{ij} \gets \textsc{Unbiased\_Z}(x_i,n_0) $ 
		\EndFor
		\EndFor 
		\State $p_{\widetilde N} \gets 2^{-1.5\widetilde N}(1-2^{-1.5})$
		\State $\widetilde{\Delta}_{\widetilde{N}+n_1}\gets\rho(1,2^{\widetilde{N}+n_1+1})-\frac{1}{2}\big(\rho(1,2^{\widetilde{N}+n_1})+\rho(2^{\widetilde{N}+n_1}+1,2^{\widetilde{N}+n_1+1})\big)$
		\State {$\textbf{Output } W(\mu) \gets \frac{\widetilde{\Delta}_{\widetilde {N}+n_1}}{p_{\widetilde{N}}}+\rho(1,2^{n_1})$}
		\EndProcedure
	\end{algorithmic}
\end{algorithm}
\begin{remark}\label{finitevsample}
	Notice that if we denote $N_{ij}$ to be the geometric random variable generated during the construction of $Z_{ij}$ and let 
	\begin{equation*}
	m=\max\{N_{ij}, 1 \leq i \leq k, 1\leq j\leq 2^{\widetilde{N}+n_1+1}\} \quad \text{and}\quad M=\lfloor2^{(m+n_0+1)\gamma }\rfloor,
	\end{equation*}
	then we only need $V_1,...,V_{M}$ because they are sufficient for Algorithm 1.
\end{remark}

\subsubsection{Properties of $W(\mu)$}
We show properties of $W(\mu)$ starting with the unbiasedness.
\begin{lemma}
	\label{ieieieieieie} Under the assumptions of Lemma~\ref%
	{orderfirstlemma} as well as Assumptions \ref{assump:2}-\ref{assump:3},
	\begin{equation}\label{firstclaim}
	\mathbb{E}W(\mu)=G(\mathbb{E} Z(\mu)).
	\end{equation}
\end{lemma}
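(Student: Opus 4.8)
The plan is to mimic the proof of Lemma~\ref{ieieieieieieie} (unbiasedness of $Z(\mu)$): condition on the outer geometric index $\widetilde N$, recognise a telescoping series, and identify its limit. Write $\bar Z_m \triangleq 2^{-m}\sum_{j=1}^{2^m} Z_j(\mu)$ for the empirical mean of $2^m$ i.i.d.\ copies and set $a_m \triangleq \mathbb{E}[G(\bar Z_m)]$. Since, conditionally on $\widetilde N=n$, the block $\widetilde\Delta_{n+n_1}$ is a fixed functional of i.i.d.\ copies of $Z(\mu)$ whose law does not depend on the value of $\widetilde N$, one has $\mathbb{E}[\widetilde\Delta_{\widetilde N+n_1}/\widetilde p_{\widetilde N}\mid \widetilde N=n]=\mathbb{E}[\widetilde\Delta_{n+n_1}]/\widetilde p_n$, so that
\begin{equation*}
\mathbb{E}W(\mu) = \sum_{n=0}^{\infty}\widetilde p_n\cdot\frac{\mathbb{E}\widetilde\Delta_{n+n_1}}{\widetilde p_n} + a_{n_1} = \sum_{n=0}^{\infty}\mathbb{E}\widetilde\Delta_{n+n_1} + a_{n_1},
\end{equation*}
once the interchange of expectation and summation is justified (discussed below).

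Next I would compute $\mathbb{E}\widetilde\Delta_n$. By linearity, and because each of $2^{-n}\sum_{j=1}^{2^n}Z_j(\mu)$ and $2^{-n}\sum_{j=2^n+1}^{2^{n+1}}Z_j(\mu)$ has the same law as $\bar Z_n$, the two subtracted terms in \eqref{finaltri} both equal $a_n$, so $\mathbb{E}\widetilde\Delta_n = a_{n+1}-a_n$. Moreover, Assumption~\ref{assump:3} makes $G$ globally Lipschitz with constant $L$, and Lemma~\ref{finitefourthmoment2} gives $\Var(Z(\mu))<\infty$, whence $\mathbb{E}|\bar Z_m-\mathbb{E}Z(\mu)|\le \sqrt{\Var(Z(\mu))/2^m}\to 0$ and $|a_m - G(\mathbb{E}Z(\mu))|\le L\,\mathbb{E}|\bar Z_m-\mathbb{E}Z(\mu)|\to 0$. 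Granting the interchange, the series then telescopes to $\lim_{m\to\infty}a_m - a_{n_1}=G(\mathbb{E}Z(\mu))-a_{n_1}$, and adding back $a_{n_1}$ yields $\mathbb{E}W(\mu)=G(\mathbb{E}Z(\mu))$.

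The step that requires genuine care — and where the antithetic block structure of $\widetilde\Delta_n$ is essential — is verifying $\sum_{n\ge0}\mathbb{E}|\widetilde\Delta_{n+n_1}|<\infty$; a mere uniform-in-$n$ bound on $\mathbb{E}|\widetilde\Delta_n|$ would not be summable, so I will extract geometric decay. Setting $A_n\triangleq 2^{-n}\sum_{j=1}^{2^n}Z_j(\mu)$ and $B_n\triangleq 2^{-n}\sum_{j=2^n+1}^{2^{n+1}}Z_j(\mu)$, the definition \eqref{finaltri} gives $\bar Z_{n+1}=\tfrac12(A_n+B_n)$, so $\widetilde\Delta_n = G\big(\tfrac12(A_n+B_n)\big)-\tfrac12 G(A_n)-\tfrac12 G(B_n)$; a second-order Taylor expansion of $G$ about $\tfrac12(A_n+B_n)$, controlled by the bound on $\partial^2 G$ in Assumption~\ref{assump:3}, yields $|\widetilde\Delta_n|\le \tfrac{L}{8}|A_n-B_n|^2$. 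Since $A_n$ and $B_n$ are independent with common mean $\mathbb{E}Z(\mu)$, $\mathbb{E}|A_n-B_n|^2 = 2\Var(Z(\mu))/2^n$, which is summable by Lemma~\ref{finitefourthmoment2}; this both licenses the Fubini step above and, a fortiori, shows $\mathbb{E}|\widetilde\Delta_{n+n_1}|=O(2^{-n})$.

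Finally, the general case $G:\mathbb{R}^k\to\mathbb{R}$ is handled identically: one replaces $\bar Z_m$ by the vector of block averages produced by the routine $\rho(\cdot,\cdot)$, uses the multivariate Lipschitz bound for the convergence $a_m\to G(\mathbb{E}Z_1(\mu),\dots,\mathbb{E}Z_k(\mu))$, and uses the multivariate second-order Taylor expansion with the Hessian bound from Assumption~\ref{assump:3} for the decay of $\mathbb{E}|\widetilde\Delta_n|$. I expect the Taylor/summability estimate in the third paragraph to be the only non-routine ingredient; everything else is bookkeeping with linearity of expectation and the law of large numbers.
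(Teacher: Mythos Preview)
Your proof is correct and follows the same route as the paper: compute $\mathbb{E}\widetilde\Delta_n=a_{n+1}-a_n$, show $a_m\to G(\mathbb{E}Z(\mu))$ via the Lipschitz bound from Assumption~\ref{assump:3}, condition on $\widetilde N$, and telescope. The paper's own proof is terser---it simply says ``the rest of the proof follows as in the proof of Lemma~\ref{ieieieieieieie}'' without addressing the Fubini interchange---whereas you supply the missing summability via the second-order Taylor bound $|\widetilde\Delta_n|\le\tfrac{L}{8}|A_n-B_n|^2$; the paper establishes an equivalent (indeed stronger) decay only later, in Lemma~\ref{deltadelta2}, through a more elaborate fourth-moment computation, so your direct $L^1$ argument is a welcome simplification at this point.
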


\begin{proof}
	According to Lemma \ref{finitefourthmoment2} and the strong law of large numbers(SLLN),
	\begin{equation}
	\lim\limits_{n \to \infty}\mathbb{E}|{\frac{\sum_{j=1}^{2^n} Z_{j}(\mu)}{2^n} -\mathbb{E} Z(\mu)}|=0,
	\end{equation}
	which implies, by Assumption~\ref{assump:3} on the bound of $\|\frac{\partial G}{\partial x_{i}}\|_{\infty}$,
	\begin{equation}
	\lim\limits_{n \to \infty} \mathbb{E}G(\frac{\sum_{j=1}^{2^n} Z_{j}(\mu)}{2^n}) = G(\mathbb{E}Z(\mu))
	\end{equation} as $n \to \infty$. Now, sincev$\mathbb{E}\widetilde{\Delta}_n=\mathbb{E}G(\frac{\sum_{j=1}^{2^{n+1}} Z_{j}(\mu)}{2^{n+1}})-\mathbb{E}G(\frac{\sum_{j=1}^{2^{n}} Z_{j}(\mu)}{2^{n}})$,
	the rest of the proof follows as in the proof of Lemma~\ref{ieieieieieieie}. 
\end{proof}
Second, we proceed to show that $W(\mu)$ also has finite variance.

\begin{lemma}
	\label{deltadelta2} Under the assumptions of Lemma~\ref%
	{orderfirstlemma} and Assumptions \ref{assump:2}-\ref{assump:3},  $\widetilde{\Delta}_n$
	satisfies 
	\begin{equation}
	\mathbb{E}(\widetilde{\Delta}_n)^2 \leq e^{{C}%
		L_1} \Delta t_n^{2}
	\end{equation}
	where ${C}>1$ is some fixed constant.
\end{lemma}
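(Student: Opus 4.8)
The plan is to transfer the antithetic cancellation used in Step~1 at the level of SDE paths up to the level of the sample averages fed into $G$. Write $\bar{Z}_n^{a}\triangleq 2^{-n}\sum_{j=1}^{2^{n}}Z_j(\mu)$ and $\bar{Z}_n^{b}\triangleq 2^{-n}\sum_{j=2^{n}+1}^{2^{n+1}}Z_j(\mu)$, and note the crucial identity that the fine average is the midpoint of the two coarse averages, $2^{-(n+1)}\sum_{j=1}^{2^{n+1}}Z_j(\mu)=\tfrac12(\bar{Z}_n^{a}+\bar{Z}_n^{b})$. Expanding $G$ by Taylor's theorem to second order about this midpoint $m\triangleq\tfrac12(\bar{Z}_n^{a}+\bar{Z}_n^{b})$ and adding the expansions at $\bar{Z}_n^{a}$ and $\bar{Z}_n^{b}$, the first-order terms cancel because $(\bar{Z}_n^{a}-m)+(\bar{Z}_n^{b}-m)=0$, and since $\bar{Z}_n^{a}-m=-(\bar{Z}_n^{b}-m)=\tfrac12(\bar{Z}_n^{a}-\bar{Z}_n^{b})$ we are left with
\[
\widetilde{\Delta}_n \;=\; -\tfrac{1}{16}\bigl(G''(\xi_a)+G''(\xi_b)\bigr)\,(\bar{Z}_n^{a}-\bar{Z}_n^{b})^2
\]
for intermediate points $\xi_a,\xi_b$. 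Assumption~\ref{assump:3} bounds $|G''|$ by $L$, so $|\widetilde{\Delta}_n|\le \tfrac{L}{8}(\bar{Z}_n^{a}-\bar{Z}_n^{b})^2$ and hence $\mathbb{E}(\widetilde{\Delta}_n)^2\le \tfrac{L^2}{64}\,\mathbb{E}(\bar{Z}_n^{a}-\bar{Z}_n^{b})^4$.

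It then remains to bound $\mathbb{E}(\bar{Z}_n^{a}-\bar{Z}_n^{b})^4$. Since the $Z_j(\mu)$ are i.i.d.\ with common finite mean $\mathbb{E}Z(\mu)$, we may write $\bar{Z}_n^{a}-\bar{Z}_n^{b}=2^{-n}\sum_{j=1}^{2^{n+1}}\varepsilon_j\bigl(Z_j(\mu)-\mathbb{E}Z(\mu)\bigr)$ with $\varepsilon_j=+1$ for $j\le 2^n$ and $\varepsilon_j=-1$ otherwise, a normalized sum of $2^{n+1}$ independent mean-zero terms. Expanding the fourth power and discarding every cross term that contains a factor of odd degree (which has zero expectation by independence and centering) gives
\[
\mathbb{E}\Bigl(\sum_{j=1}^{2^{n+1}}\varepsilon_j(Z_j-\mathbb{E}Z)\Bigr)^4 \;\le\; 2^{n+1}\,\mathbb{E}(Z-\mathbb{E}Z)^4 \;+\; 3\cdot 2^{2(n+1)}\,\bigl(\mathbb{E}(Z-\mathbb{E}Z)^2\bigr)^2 .
\]
By Lemma~\ref{finitefourthmoment2} we have $\mathbb{E}Z^4(\mu)\le e^{CL_1}$, and then the elementary inequality \eqref{elementary inequality} (with $p=4$, $N=2$) together with Cauchy--Schwarz give $\mathbb{E}(Z-\mathbb{E}Z)^4\le e^{C'L_1}$ and $(\mathbb{E}(Z-\mathbb{E}Z)^2)^2\le e^{C'L_1}$ as well. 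The $2^{2n}$ term dominates, so $\mathbb{E}(\bar{Z}_n^{a}-\bar{Z}_n^{b})^4\le 2^{-4n}\cdot C'' 2^{2n}e^{C'L_1}=C''e^{C'L_1}\Delta t_n^{2}$; combining with the bound of the previous paragraph and absorbing the fixed constants $L,C,C',C''$ into the exponent (legitimate since $L_1>1$) yields $\mathbb{E}(\widetilde{\Delta}_n)^2\le e^{CL_1}\Delta t_n^{2}$.

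The only delicate point is choosing the Taylor base point correctly: one must expand about the midpoint $m=\tfrac12(\bar{Z}_n^{a}+\bar{Z}_n^{b})$, because it is precisely the identity ``fine average $=$ average of the two coarse averages'' that kills the $O(\bar{Z}_n^{a}-\bar{Z}_n^{b})$ contribution; expanding about $\mathbb{E}Z(\mu)$ or any other point would leave a surviving first-order term of size $O(\Delta t_n^{1/2})$ and only give $\mathbb{E}(\widetilde{\Delta}_n)^2=O(\Delta t_n)$, which is insufficient for the finite-variance argument for $W$. Everything else is a routine moment estimate. The same argument applies verbatim to $G:\mathbb{R}^{k}\to\mathbb{R}$ upon replacing the scalar second derivative by the Hessian and using its componentwise bound from Assumption~\ref{assump:3}.
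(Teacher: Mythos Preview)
Your proof is correct and in fact slightly cleaner than the paper's, but your concluding paragraph contains a mistaken claim that is worth correcting.

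The paper Taylor-expands all three terms about $\mathbb{E}Z(\mu)$ rather than about the midpoint $m$. With the notation $S(a,b)=\tfrac{1}{b-a+1}\sum_{j=a}^{b}Z_j$ this yields
\[
\widetilde{\Delta}_n=\tfrac{1}{2}G''(\xi_1)\bigl(S(1,2^{n+1})-\mathbb{E}Z\bigr)^2-\tfrac{1}{4}G''(\xi_2)\bigl(S(1,2^{n})-\mathbb{E}Z\bigr)^2-\tfrac{1}{4}G''(\xi_3)\bigl(S(2^n+1,2^{n+1})-\mathbb{E}Z\bigr)^2,
\]
after which one squares, bounds by the three fourth powers, and expands each $\mathbb{E}\bigl(\sum(Z_j-\mathbb{E}Z)\bigr)^4$ separately; the $\binom{2^{n+1}}{2}2^{-4n}=O(\Delta t_n^{2})$ combinatorics match yours. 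Your choice of base point packages the three second-order remainders into the single quantity $(\bar{Z}_n^{a}-\bar{Z}_n^{b})^2$, which is tidier and avoids handling three intermediate points.

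However, your assertion that ``expanding about $\mathbb{E}Z(\mu)$ or any other point would leave a surviving first-order term'' is false: the first-order contribution at \emph{any} base point $a$ is $G'(a)\bigl(m-a-\tfrac12[(\bar{Z}_n^{a}-a)+(\bar{Z}_n^{b}-a)]\bigr)=G'(a)\bigl(m-\tfrac12(\bar{Z}_n^{a}+\bar{Z}_n^{b})\bigr)=0$, by exactly the same identity you invoke. The cancellation is intrinsic to the antithetic structure, not to the choice of expansion point; the paper's proof exploits it just as yours does. You should drop or amend that paragraph.
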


\begin{proof}
	Define $S(a,b) \triangleq \frac{\sum_{j=a}^b Z_j}{b-a+1}$ and $S^k(a,b)=(S(a,b)-\mathbb Z(\mu))^k$. Then, a second order Taylor expansion of $G(\cdot)$ on $\mathbb{E}Z(\mu)$ gives
	\begin{align} \widetilde{\Delta}_n=&G(S(1,2^{n+1}))-\frac{1}{2}\Big(G(S(1,2^n))+G(S(2^n+1,2^{n+1}))\Big) \nonumber \\
	=&G^{'}(\mathbb{E}Z(\mu))(S(1,2^{n+1})-\frac{1}{2}\Big(S(1,2^n)+S(2^n+1,2^{n+1})\Big)\nonumber\\
	&+\frac{G^{''}(\xi_1)}{2}S^2(1,2^{n+1})-\frac{G^{''}(\xi_2)}{4} S^2(1,2^n) -\frac{G^{''}(\xi_3)}{4} S^2(2^{n}+1,2^{n+1}),
	\end{align}
	where term $G(\mathbb E Z(\mu))$ cancels out and as in the mean value theorem, $\xi_1$ stands for a random variable between $\mathbb E Z(\mu)$ and $S(1,2^{n+1})$, similarly $\xi_2$ for $S(1,2^{n})$ and $\xi_3$ for $S(2^n+1,2^{n+1})$.
	Thus, it follows from \eqref{elementary inequality} and Assumption \ref{assump:3}, we have
	\begin{equation}\label{abovedelta}
	|{\widetilde{\Delta}_n}|^2 \leq \frac{3L^2}{4}\big( S^4(1,2^{n+1})+\frac{1}{4} S^4(1,2^n)+\frac{1}{4}S^4(2^n+1,2^{n+1})\big).
	\end{equation}
	However, the $(Z_j(\mu)-\mathbb{E}Z(\mu))$  are I.I.D. with mean 0. In particular, when we write out the expansion in \eqref{abovedelta} and take expectation, the terms with odd power will vanish 
	\begin{align}
	\mathbb{E}[(Z_i(\mu)-\mathbb{E}Z(\mu))^2(Z_j(\mu)-\mathbb{E}Z(\mu))(Z_k(\mu)-\mathbb{E}Z(\mu))]=&0 \nonumber\\ \mathbb{E}[(Z_i(\mu)-\mathbb{E}Z(\mu))^3(Z_j(\mu)-\mathbb{E}Z(\mu))]=&0 \nonumber\\
	\mathbb{E}[(Z_i(\mu)-\mathbb{E}Z(\mu))(Z_j(\mu)-\mathbb{E}Z(\mu))(Z_k(\mu)-\mathbb{E}Z(\mu))(Z_l(\mu)-\mathbb{E}Z(\mu))]=&0.
	\end{align}
	Thus, taking expectation in \eqref{abovedelta} gives $	\mathbb{E}{\widetilde{\Delta}_n}^2 $ is bounded by
	\begin{align}
	&\frac{3L^2}{2^{4n+4}}\mathbb{E} 
	\bigg[\frac{1}{4}\big(\sum_{j=1}^{2^{n+1}}Z_j(\mu)-\mathbb{E}Z(\mu)\big)^4+\big(\sum_{j=1}^{2^{n}}Z_j(\mu)-\mathbb{E}Z(\mu)\big)^4+\big(\sum_{j=2^n+1}^{2^{n+1}}Z_j(\mu)-\mathbb{E}Z(\mu)\big)^4\bigg] \nonumber\\
	\leq& C\binom{2^{n+1}}{2}{{2^{-4n}}}\cdot \mathbb{E}\big(Z(\mu)-\mathbb{E}Z(\mu)\big)^4
	\end{align}
	for some constants $C>1$ since $\mathbb E(Z_j(\mu)-\mathbb E Z(\mu))^2(Z_i(\mu)-\mathbb E Z(\mu))^2 \leq \mathbb E(Z_j(\mu)-\mathbb E Z(\mu))^4$. However, it can be shown that $\binom{n}{2}= O(n^2)$.
	Thus, we have
	\begin{equation}
	\binom{2^{n+1}}{2}{{2^{-4n}}}\leq C\Delta t_n^2,
	\end{equation}
	for some constant $C>1$. Furthermore,  we can bound $\mathbb{E}\big(Z(\mu)-\mathbb{E}Z(\mu)\big)^4$ using $\mathbb{E}Z^4(\mu)$ and its bound from Lemma~\ref{finitefourthmoment2} to obtain \begin{equation}
	\mathbb{E}\big(Z(\mu)-\mathbb{E}Z(\mu)\big)^4\leq e^{CL_1}
	\end{equation}
	for some constant $C>1$. Finally, we conclude there is some constant $C>1$ such that 
	\begin{equation}
	\mathbb{E}(\widetilde{\Delta}_n)^2 \leq e^{{C}%
		L_1} \Delta t_n^{2}.
	\end{equation}
\end{proof}

\begin{lemma}
	\label{varianceW} Under the assumptions of Lemma~\ref%
	{orderfirstlemma} and Assumptions \ref{assump:2}-\ref{assump:3},  $W$ satisfies 
	\begin{equation}
	\mathbb{E}W^2(\mu)\leq e^{CL_1}
	\end{equation}
	for some constant $C>1$.
\end{lemma}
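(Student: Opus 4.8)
The plan is to split $W(\mu)$, as defined in \eqref{finalw}, into the randomised tail term $\widetilde{\Delta}_{\widetilde{N}+n_1}/\widetilde{p}_{\widetilde{N}}$ and the base term $G\big(2^{-n_1}\sum_{j=1}^{2^{n_1}}Z_j(\mu)\big)$, and to bound the second moment of each by a quantity of the form $e^{CL_1}$. Using the elementary inequality $(a+b)^2\le 2a^2+2b^2$, it then suffices to show that $\mathbb{E}\big(\widetilde{\Delta}_{\widetilde{N}+n_1}/\widetilde{p}_{\widetilde{N}}\big)^2$ and $\mathbb{E}\,G\big(2^{-n_1}\sum_{j=1}^{2^{n_1}}Z_j(\mu)\big)^2$ are each at most $e^{CL_1}$, after which relabelling the constant gives the claim.

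For the tail term I would condition on $\widetilde{N}$, which is independent of the $Z_j(\mu)$'s and hence of $\widetilde{\Delta}_n$, exactly as in the conditioning argument used for the unbiasedness of $Z$ and $W$, to obtain
\begin{equation*}
\mathbb{E}\frac{\widetilde{\Delta}_{\widetilde{N}+n_1}^2}{\widetilde{p}_{\widetilde{N}}^2}=\sum_{n=0}^{\infty}\widetilde{p}_n\cdot\frac{\mathbb{E}\widetilde{\Delta}_{n+n_1}^2}{\widetilde{p}_n^2}=\sum_{n=0}^{\infty}\frac{\mathbb{E}\widetilde{\Delta}_{n+n_1}^2}{\widetilde{p}_n}.
\end{equation*}
Feeding in $\mathbb{E}\widetilde{\Delta}_{n+n_1}^2\le e^{CL_1}2^{-2(n+n_1)}$ from Lemma~\ref{deltadelta2} together with $\widetilde{p}_n=(1-2^{-1.5})2^{-1.5n}$ reduces this to $\tfrac{e^{CL_1}2^{-2n_1}}{1-2^{-1.5}}\sum_{n=0}^{\infty}2^{-n/2}$, which is finite precisely because the rate exponent $1.5$ in the geometric law of $\widetilde{N}$ is strictly below the decay exponent $2$ coming from the $\Delta t_n^2$ bound in Lemma~\ref{deltadelta2}; this is the reason $\widetilde{N}$ was chosen with parameter $1-2^{-1.5}$. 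Hence the tail term is bounded by $e^{C_1 L_1}$ for a suitable constant.

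For the base term I would use the Lipschitz bound $\|G'\|_\infty\le L$ from Assumption~\ref{assump:3}, so that $|G(y)|^2\le 2|G(0)|^2+2L^2|y|^2$, and then apply Jensen's inequality for $x\mapsto x^2$ and Cauchy--Schwarz: $\mathbb{E}\big(2^{-n_1}\sum_{j=1}^{2^{n_1}}Z_j(\mu)\big)^2\le\mathbb{E}Z^2(\mu)\le\big(\mathbb{E}Z^4(\mu)\big)^{1/2}\le e^{CL_1/2}$, the last step by Lemma~\ref{finitefourthmoment2}. Since $G(0)$ and $L$ are fixed constants and $L_1>1$, this bounds the base term by $e^{C_2 L_1}$. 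Combining the two contributions through $(a+b)^2\le 2a^2+2b^2$ yields $\mathbb{E}W^2(\mu)\le 2e^{C_1 L_1}+2e^{C_2 L_1}\le e^{CL_1}$.

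I do not expect a genuine obstacle here: all the inputs have already been established, and the computation is routine. The one point that requires discipline --- and the reason the rough-path argument behind Lemma~\ref{orderfirstlemma} and its consequences Lemmas~\ref{finitefourthmoment} and \ref{finitefourthmoment2} was needed in the first place --- is that every constant entering this estimate must be of the form $e^{CL_1}$ rather than $e^{CL_1^p}$ with $p>1$; otherwise, when one later integrates over the random field $\boldsymbol{\mu}$ (whose bounding number $\boldsymbol{L}_1$ has all exponential moments by Lemma~\ref{assump1} but, as noted in the remark following Lemma~\ref{orderfirstlemma}, need not satisfy $\mathbb{E}\,e^{c\boldsymbol{L}_1^p}<\infty$ for $p>1$), the finite-variance property would be lost. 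So the only care required is bookkeeping: verifying that the decomposition above never inflates the exponent of $L_1$, and confirming the convergence of the geometric series in the tail term, both of which hold by the choices of $\theta$, $\gamma$, and the rate of $\widetilde{N}$.
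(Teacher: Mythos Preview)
Your proposal is correct and follows essentially the same route as the paper: split $W(\mu)$ via $(a+b)^2\le 2a^2+2b^2$, bound the randomised tail by conditioning on $\widetilde N$ and invoking Lemma~\ref{deltadelta2} so that the series $\sum_n 2^{-2n}/2^{-1.5n}$ converges, and bound the base term through the Lipschitz growth of $G$ together with the fourth-moment bound on $Z$ from Lemma~\ref{finitefourthmoment2}. The bookkeeping you flag---keeping every constant of the form $e^{CL_1}$---is exactly the point the paper makes as well.
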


\begin{proof}
	Using bounds on the fourth moment of $Z$ in Lemma \ref{finitefourthmoment2}, the linear growth condition of $G(\cdot)$ in Assumption~\ref{assump:3} and the Cauchy-Schwarz inequality, we can bound
	\begin{align}\label{extracost2}
	\mathbb{E}|G(\frac{\sum_{j=1}^{2^{n_1}} Z_{j}(\mu)}{2^{n_1}})\big|^2 \leq& \mathbb{E}(|G(0)|+L|\frac{\sum_{j=1}^{2^{n_1}} Z_{j}(\mu)}{2^{n_1}}|)^2  \nonumber\\
	\leq & |G(0)|^2+2|G(0)|L\mathbb{E}|Z(\mu)|+L^2\mathbb{E}Z^2(\mu) \leq C+Ce^{CL_1}
	\end{align}
	for some constant $C>1$. Now, using \eqref{elementary inequality}, \eqref{extracost2} and \ref{deltadelta2}, we have
	\begin{align}\label{justabove}
	\mathbb{E}W^2(\mu)\leq &  2\mathbb{E}\Big( \frac{\widetilde{\Delta}_{N+n_1}^2}{\widetilde{p}^2_{N}}+|G(\frac{\sum_{j=1}^{2^{n_1}} Z_{j}}{2^{n_1}})|^2 \Big)  \nonumber\\
	=& 2\sum_{n=0}^{\infty} \frac{\mathbb{E}\widetilde{\Delta}_{n_1+n}^2}{\widetilde{p}_{n}}+2\mathbb{E}|G(\frac{\sum_{j=1}^{2^{n_1}} Z_{j}}{2^{n_1}})|^2  \nonumber \\
	\leq& \frac{2 e^{{C}L_1}}{(1-2^{-1.5})} \sum_{n=0}^{\infty} \frac{2^{-2n}}{ 2^{-1.5n}}+2C+2Ce^{CL_1} \leq e^{C^{\prime}L_1} 
	\end{align}
	for some appropriately chosen $C^{\prime}>1$. The last line follows from the fact that for any $a,b$ and $c$, we can find $d$ such that $a+ce^{bx}<e^{dx}$ for $x>1$.
	
\end{proof}
Finally, we discuss the computational cost for generating $W(\mu)$. Denote the cost by $cost_{W}$. Since
we use the first $2^{{n_{1}}}$ samples of $Z_{j}$ in the construction of 
$\widetilde{\Delta }_{\widetilde{N}+n_{1}}$ to construct $%
G(\sum_{j=1}^{2^{n_{1}}}Z_{j}/2^{n_{1}})$, we only consider the cost induced
by term $\widetilde{\Delta }_{\widetilde{N}+n_{1}}$, namely
\begin{equation}
cost_{W}=\sum_{j=1}^{2^{\widetilde{N}+n_{1}+1}}cost_{Z_{j}}.
\label{compcostw}
\end{equation}

\begin{lemma}
	\label{computecost2} The total expected computational cost satisfies
	\begin{equation}
	\mathbb{E}({cost_W})< \infty.
	\end{equation}
\end{lemma}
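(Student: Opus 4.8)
The plan is to reduce the claim to the single-$Z$ cost bound from Lemma~\ref{computecost} by conditioning on the outer geometric index $\widetilde{N}$. Starting from the cost accounting in \eqref{compcostw}, and including all $k$ coordinates of $G$ handled in Algorithm~\ref{euclid2}, I would first write
\begin{equation*}
cost_W \;\le\; \sum_{i=1}^{k}\;\sum_{j=1}^{2^{\widetilde{N}+n_1+1}} cost_{Z_{ij}},
\end{equation*}
where the $cost_{Z_{ij}}$ are i.i.d.\ copies of $cost_Z$. I would note that this is a valid upper bound on the true cost: as pointed out in Remark~\ref{finitevsample}, the Gaussian vectors $\boldsymbol{V}_i$ can be generated once and shared across the inner calls that generate the $Z_{ij}$, whereas the right-hand side charges each $cost_{Z_{ij}}$ for its own copies of the $\boldsymbol{V}_i$. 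The key structural fact is that $\widetilde{N}$ is drawn independently of the family $\{cost_{Z_{ij}}\}_{i,j}$: only the \emph{number} of inner samples, not their realizations, depends on $\widetilde{N}$.

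Next I would take expectations by the tower property (a Wald-type identity). Conditioning on $\widetilde{N}$ turns the double sum into $k\,2^{\widetilde{N}+n_1+1}$ i.i.d.\ terms, giving
\begin{equation*}
\mathbb{E}(cost_W) \;\le\; k\,2^{n_1+1}\,\mathbb{E}\big(2^{\widetilde{N}}\big)\,\mathbb{E}(cost_Z).
\end{equation*}
By Lemma~\ref{computecost} (valid under the standing choice $\theta>1+\gamma$), $\mathbb{E}(cost_Z)$ is a finite constant. For the remaining factor, since $\widetilde{N}\sim Geom(1-2^{-1.5})$ with $\widetilde{p}_n=2^{-1.5n}(1-2^{-1.5})$,
\begin{equation*}
\mathbb{E}\big(2^{\widetilde{N}}\big) \;=\; (1-2^{-1.5})\sum_{n=0}^{\infty} 2^{n}\,2^{-1.5n} \;=\; (1-2^{-1.5})\sum_{n=0}^{\infty} 2^{-n/2} \;<\;\infty ,
\end{equation*}
because $2^{-1/2}<1$. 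Combining the two bounds yields $\mathbb{E}(cost_W)<\infty$.

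The only point that needs care — and essentially the whole content of the argument — is the interplay between the \emph{exponential} growth $2^{\widetilde{N}+n_1+1}$ in the number of inner $Z$-samples used at level $\widetilde{N}$ and the tail of $\widetilde{N}$: integrability of $2^{\widetilde{N}}$ holds precisely because the rate $1.5$ of the outer geometric law strictly exceeds $1$, which is exactly why $W(\mu)$ in \eqref{finalw} is built with $\widetilde{N}\sim Geom(1-2^{-1.5})$. I do not anticipate any genuine obstacle beyond this: the shared-$\boldsymbol{V}_i$ optimization of Remark~\ref{finitevsample} can only lower the cost, the prefactor $k$ is harmless, and the independence of $\widetilde{N}$ from the inner randomness makes the Wald-type step routine.
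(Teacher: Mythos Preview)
Your proposal is correct and takes essentially the same approach as the paper: both apply a Wald-type identity to factor $\mathbb{E}(cost_W)$ as $\mathbb{E}(2^{\widetilde{N}+n_1+1})\cdot\mathbb{E}(cost_Z)$, invoke Lemma~\ref{computecost} for the finiteness of $\mathbb{E}(cost_Z)$, and verify that $\mathbb{E}(2^{\widetilde{N}})=(1-2^{-1.5})\sum_{n\ge 0}2^{-n/2}<\infty$. Your version is slightly more explicit in handling the $k$ coordinates and the shared-$\boldsymbol{V}_i$ optimization, but the argument is the same.
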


\begin{proof}
	Using Wald's identity, we have
	\begin{equation}
	\mathbb{E}({cost_W}) =\mathbb{E}(2^{\tilde{N}+n_1+1})\mathbb{E}({cost}%
	_Z)=2^{n_1+1}\bigg(\sum_{n=0}^{\infty}2^{-0.5 n}(1-2^{-1.5})\bigg)\mathbb{E}({cost}_Z)<\infty,
	\end{equation}
	where $\mathbb{E}({cost}_Z)<\infty$ follows from Lemma~\ref{computecost}.
\end{proof}

\subsection{Proof of Theorem \protect\ref{thm:main}}
\begin{proof}	\label{sec:proof-main}
	We simulate $W(\mu)$ according to Algorithm \ref{euclid2}. Since ${\mu}(\cdot)\in\mathcal{L}_1$ almost surely by Lemma ~\ref{assump1}, it follows from Lemma~\ref{ieieieieieie} that $W(\mu)$ satisfies 
	\begin{equation*}
	\mathbb{E}[W(\mu)]=G(\mathbb{E}[Z_{1}(\mu)],...,%
	\mathbb{E}[Z_k(\mu)])=G(u(x_{1},t_{1}),...,u(x_{k},t_{k})),
	\end{equation*}%
	by Lemma ~\ref{ieieieieieieie} and Theorem~\ref{fkt},
	where $\mu(\cdot)\in\mathcal{L}_1$ is fixed. Thus, when $\boldsymbol{\mu}$ is random,
	\begin{align*}
	\mathbb{E}[W]=\mathbb{E}_{\mu\sim\boldsymbol{\mu}}[\mathbb{E}[W(\mu)]]
	=\mathbb{E}[G(\boldsymbol{u}(x_{1},t_{1}),...,\boldsymbol{u}(x_{k},t_{k}))]=\nu,
	\end{align*}
	which proves the unbiasedness of $W$. To show the finite variance property of $W$, we use Lemma~\ref{lemma L1} and Lemma~\ref{varianceW} to obtain,
	\begin{align*}
	\mathbb{E}W^{2}=\mathbb{E}_{\mu\sim\boldsymbol{\mu}}[\mathbb{E}[W^2(\mu)]]\leq \mathbb{E}_{\mu\sim\boldsymbol{\mu}}%
	[ e^{C\boldsymbol{L}_1}] <\infty.
	\end{align*}%
	Finally, the finite expected computational
	cost follows directly from Lemma~\ref{computecost2}.

\end{proof}

\begin{remark}
	To digress, if $\sigma$ is not bounded, one can localize $\sigma$ by constructing $\sigma^{N}$: 
	\begin{equation}
	\begin{cases}
	\sigma^N(x)=\sigma(x) & when\ \ \ \|x\| \leq N \\ 
	\sigma^N(x)=0 & when\ \ \ \|x\| > N+1%
	\end{cases}.%
	\end{equation}
	Denote estimator by $W^N$ when generated under $\sigma^N(\cdot)$. Then, by adding randomization with $N^{\prime }$  again being geometric random variable,
	\begin{equation}
	\widetilde{W} \triangleq W^{n_2}+\frac{W^{N^{^{\prime
			}}+1+n_2}-W^{N^{^{\prime }}+n_2}}{p_{N^{^{\prime }}}},
	\end{equation}
\end{remark}

\section{Simulation}\label{sec:sim} 
\paragraph{Example 1}

We first introduce an example to check the unbiasedness of our estimator. Consider the one-dimensional SDE known as the Ornstein-Uhlenbeck Process  \cite{kloeden2011numerical}:
\begin{equation}  \label{ouprocess}
\begin{cases}
dX_t=& -\boldsymbol{\alpha} X_t dt+ dB_t\qquad \text{for $t
	\geq 0$} \\ 
X_0  =&0%
\end{cases},%
\end{equation}
where $\boldsymbol{\alpha} \in \mathbb{R}$ is a random. Given realizations of $\alpha(\omega)$, the solution can be found exactly,
\begin{equation}
X_1=e^{-\alpha t} \int_0^1 e^{\alpha s} \ dB_s  .
\end{equation}
Consequently, given realizations of $\alpha(\omega)$, using It\^{o}'s isometry, it can be shown that $X_1$ is
Gaussian with mean 0 and variance $(2\alpha(\omega))^{-1}({1-e^{-2\alpha(\omega)}})$. For simulation, we set $\alpha(\omega)$ to be Gaussian with mean 1
and variance $0.05^2$ along with $f(x)=x^2$, $G(x)=e^{-x^2}$. Then, it follows from calculation that, 
\begin{align*}  
\mathbb{E}[f(X_1)|\alpha]=& \frac{1-e^{-2\alpha}}{
	2\alpha},\nonumber\\
\mathbb{E}[G(\mathbb{E}[f(X_1)|\alpha])]=&\int_{-\infty}^{\infty} 
\frac{1}{\sqrt{2\pi\cdot 0.05^2}}\cdot \ e^{-\frac{(x-1)^2}{2\cdot 0.05^2}}
\cdot \ e^{-(\frac{({1-e^{-2x}})}{2x})^2} dx\approx 0.8291.
\end{align*}
To check the unbiasedness property of $Z$, we first fix $\alpha=1$ in simulation so that $\mathbb{E}[f(X_1)|\alpha=1]\approx 0.4323$ .
Picking $%
n_0=5$ as the base level, we
generate 10,000 copies of $Z$ with $\alpha=1$. A
sample mean of $0.4303$ is obtained to compare with its true mean $0.4323$, as in Figure
1. Then, we pick $n_1=5$ and generate 10,000 copies of $W$ to obtain a sample mean of $0.8323$
to while the true mean is $0.8291$, as in Figure 2a. Furthermore, in Figure 2b, we generate 10000 copies of unbiased estimators of $G(u(x,1))$ using the multilevel Monte Carlo estimator based on numerical PDE as proposed in \cite{li2016multilevel}. In both cases, the
sample size is 10,000 and the difference between sample mean and true mean is
well within the margin dictated by CLT, $\frac{1}{\sqrt{ N_{copy}}}=\frac{1}{\sqrt{10000}}=0.01$. Overall, the findings are consistent with
our theoretical results on the unbiasedness.

\begin{figure}[tbp]
	\begin{center}
		\includegraphics[width=0.45\linewidth]{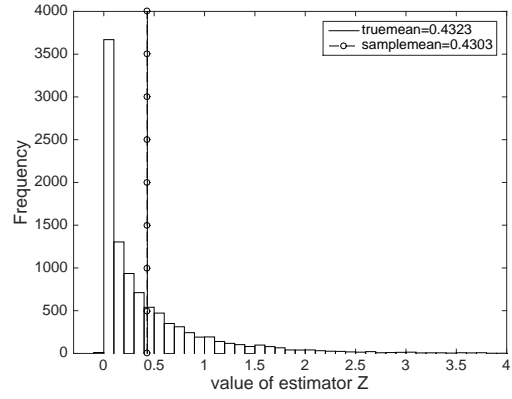}
		\caption{Histogram of Estimator $Z$ when
			$\alpha=1$}\label{}
	\end{center}
\end{figure}

\begin{figure}[tbp]
	\centering  
	\subfigure[Histogram of Estimator
	$W$]{\includegraphics[width=0.45\linewidth]{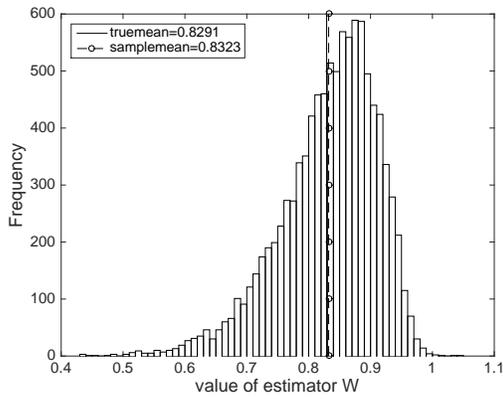}} 
	\subfigure[ Estimators based on Numerical PDE]{\includegraphics[width=0.45\linewidth]{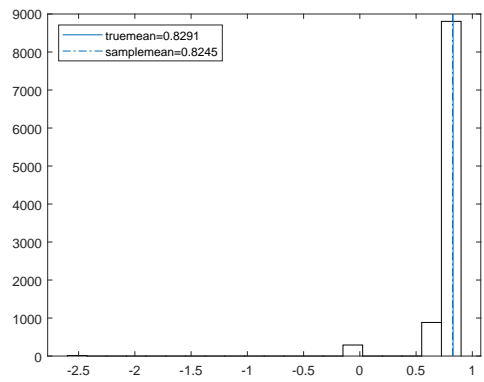}}
	\caption{\text{Comparsion of Multilevel Estimators based on Antithetic Numerical SDE or Numerical PDE}}
\end{figure}

{}\ {}\ 

\paragraph{Example 2}
In this example, we consider the more complicated SDE:
\begin{equation}
\begin{cases}
dX_{t}& =-\boldsymbol{\mu} (X_{t} )dt+\cos(X_{t}) dB_{t}\qquad \text{for $t\geq 0$} \\ 
X_{0} &=0,%
\end{cases}
\label{example2}
\end{equation}%
where $\boldsymbol{\mu}(x)=\sum\limits_{i=1}^{\infty }i^{-4} \sin(ix)
\boldsymbol{V}_{i}$ and we
compare the proposed method with the standard Monte Carlo method with
bias. We take $\gamma =\frac{1}{3}$ and $\theta =\frac{4}{3}$ for simplicity ( the detailed discussion in Section~\ref{sec:proof}). Similar to the previous example, we
take $n_{0}=n_{1}=5$. We generate $10,000$ copies of our estimator
and compare it with $10,000$ copies of a standard Monte Carlo estimator where
we remove the debiasing part $\frac{\Delta _{N}}{p_{N}}$ in 
\emph{both} estimator $Z$ and $W$.  As a result,
using the CLT, we compute a 95\%
confidence interval
$[0.4610,0.4656]$ for our estimator while we obtain an interval $[0.5189,0.5255]$ for the standard
Monte Carlo estimator. As we can see, these two intervals are not overlapping, suggesting that the
standard Monte Carlo estimator has shown a significant bias.

\appendix

\section{Proofs}\label{sec:proof} 
In this section, we present the proofs for Lemma~\ref%
{orderfirstlemma},  Lemma~\ref{finitefourthmoment} and Lemma~\ref{lemma L1}. The proof for all the
supporting lemmas are provided in the Appendix.
\subsection{Definitions and supporting lemmas}
 To prove Lemma~\ref%
{orderfirstlemma}, we introduce several definitions and supporting lemmas.

\begin{defn}
	\label{alni} Let $\epsilon $ to be a positive constant small enough to
	satisfy 
	\begin{equation}
	\epsilon <\frac{1}{144}\ \ \ \ \text{and}\ \ \ \ \epsilon <\frac{1}{36}(%
	\frac{1}{6}-12\epsilon )(q-4),  \label{epsiloncondition}
	\end{equation}%
	where $q>4$ is from Assumption~\ref{assump:1}, so that we can define positive quantities 
	\begin{equation}
	\alpha \triangleq \frac{1}{2}-\epsilon ,\ \ \beta \triangleq \frac{1}{2}%
	+2\epsilon ,\ \ \gamma \triangleq \frac{1}{3}-12\epsilon ,\ \ \ \theta
	\triangleq \frac{4}{3}-\frac{23}{2}\epsilon \ \ \ and\ \ \ \delta \triangleq
	33\epsilon  \label{parameter}
	\end{equation}%
	It is easy to check that the following important inequalities are satisfied :
	\begin{align}
\gamma \geq& \frac{1}{4}, \quad (3+\frac{q-4}{2}%
)\gamma >1, \quad 8(2\alpha
-\beta )>4-\delta>0 , \nonumber\\
4-\delta >&3\theta>0 \quad(\text{as in Lemma~\ref{finitefourthmoment2}})\quad \text{and} \quad \theta >1+\gamma>0 \quad (\text{as in Lemma~\ref{computecost}}).
\end{align}
\end{defn}
\begin{defn}\label{add15}
	\label{jose0} For a standard one-dimensional Brownian motion $B(t)$ on $[0.1]$, let $\alpha $ and $%
	\beta $ be defined as in Definition \ref{alni}. Then, define 
	\begin{equation}
	\Vert B\Vert _{\alpha }\triangleq \sup_{0\leq s<t\leq 1}\frac{\Vert
		B(t)-B(s)\Vert _{\infty }}{\lvert t-s\rvert ^{\alpha }}\qquad \text{and}%
	\qquad \Vert A\Vert _{2\alpha }\triangleq \sup_{0\leq s<t\leq 1}\max_{1\leq
		i,j\leq d^{\prime}}\frac{\lvert A_{i,j}(s,t)\rvert }{\lvert t-s\rvert ^{2\alpha }}
	\end{equation}%
	\begin{equation}
	\Vert \tilde{A}\Vert _{2\alpha }\triangleq \sup_{0\leq s\leq t\leq
		1}\max_{1\leq i,j\leq d^{\prime}}\frac{\lvert \widetilde{A}_{i,j}(s,t)\rvert }{\lvert
		t-s\rvert ^{2\alpha }}\qquad \text{and}\qquad \Gamma _{\widetilde{R}%
	}\triangleq \sup_{n}\sup_{\substack{ 0\leq s\leq t\leq 1  \\ s,t\in D_{n}}}%
	\max_{1\leq i,j\leq d^{\prime}}\frac{\lvert \widetilde{R}_{i,j}^{n}(s,t)\rvert }{%
		\lvert t-s\rvert ^{\beta }\Delta t_{n}^{2\alpha -\beta }},
	\end{equation}%
	where $D_{n}$ is the dyadic rationals that are multiples of $\frac{1}{%
		2^{n}}$ in $[0,1]$, and for $1 \leq i,j \leq d^{\prime}, i\neq j$,
	\begin{equation}
	A_{i,j}(s,t)\triangleq \int_{s}^{t}(B_{i}(u)-B_{i}(s))dB_{j}(u)~~\text{and}~~%
	\widetilde{A}_{i,i}(s,t)\triangleq {A}_{i,i}(s,t)=\frac{%
		(B_{i}(t)-B_{i}(s))^{2}-(t-s)}{2},
	\end{equation}%
	\begin{equation}
	\widetilde{A}_{i,j}(s,t)\triangleq \frac{%
		(B_{i}(t)-B_{i}(s))(B_{j}(t)-B_{j}(s))}{2}~~\text{and}~~\widetilde{R}%
	_{i,j}^{n}(t_{l}^{n},t_{m}^{n})\triangleq
	\sum_{k=l+1}^{m}\{A_{i,j}(t_{k-1}^{n},t_{k}^{n})-\widetilde{A}%
	_{i,j}(t_{k-1}^{n},t_{k}^{n})\},
	\end{equation}
	some of which we have already defined in Definition~\ref{cdefn}.
\end{defn}
\begin{defn}[\textbf{Notation}]\label{PandC}
Throughout the proof section, we will use $C$ to represent any constant greater than 1 (i.e., $C>1$) and use $\mathcal{P}(\cdot)$ to represent any polynomial function from $\mathbb{R}^n\rightarrow\mathbb{R}$ where $n\geq 1$ such that $\mathcal{P}({x})>1$ for any ${x}_1 >1$, $x_i \geq 0$ for $2 \leq i \leq n$, and $x=(x_1,...,x_n)$. We will  simply write this as $\mathcal{P}(x)>1$ for $x>1$ and it will not affect our analysis.

 It is straightforward to verify that for any $\mathcal{P}_1(\cdot),\mathcal{P}_2(\cdot)$ and $n\geq 0$, we can find some $\mathcal{P}_3(x)$ that
\begin{align}
(\mathcal{P}_1(x) )^n<& \mathcal{P}_3(x) \nonumber\\
\mathcal{P}_1(x)+\mathcal{P}_2(x)<&\mathcal{P}_3(x)\nonumber\\
\mathcal{P}_1(x)\cdot\mathcal{P}_2(x)<&\mathcal{P}_3(x)\nonumber\\
\mathcal{P}_2(\mathcal{P}_1(x))<&\mathcal{P}_3(x).
\end{align} 
\end{defn}

\begin{lemma}[\textbf{Supporting Lemma}]
	\label{jose1} The quantities $\|B\|_{\alpha},\|A\|_{2\alpha}$, $\| 
	\tilde{A}\|_{2\alpha}$ and $\Gamma_{\widetilde{R}} $ defined in Definition~\ref{add15}
	have moments of arbitrary order.
\end{lemma}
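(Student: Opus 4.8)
The plan is to reduce each of the four suprema to \emph{pointwise} moment bounds on the underlying increments and then to promote these to moment bounds on the suprema by a Garsia--Rodemich--Rumsey (GRR)/Kolmogorov-type chaining argument. The key structural fact is that every building block lives in a fixed Wiener chaos: $B(t)-B(s)$ belongs to the first chaos with $\|B(t)-B(s)\|_{L^2}=|t-s|^{1/2}$; the L\'evy areas $A_{i,j}(s,t)$ and their trapezoidal surrogates $\widetilde A_{i,j}(s,t)$, together with each summand $A_{i,j}(t^n_{k-1},t^n_k)-\widetilde A_{i,j}(t^n_{k-1},t^n_k)$ of $\widetilde R^{n}_{i,j}$, belong to the second chaos, with $L^2$ norms of order $|t-s|$ and of order $\Delta t_n$ respectively (the diagonal terms $i=j$ contribute nothing to $\widetilde R^{n}$ since there $A_{i,i}=\widetilde A_{i,i}$). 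By hypercontractivity (equivalence of all $L^p$ norms on a fixed chaos) one obtains, for every $p\geq 2$, $\mathbb{E}|B(t)-B(s)|^p\leq C_p|t-s|^{p/2}$, $\mathbb{E}|A_{i,j}(s,t)|^p+\mathbb{E}|\widetilde A_{i,j}(s,t)|^p\leq C_p|t-s|^p$, and, writing $\widetilde R^{n}_{i,j}(s,t)$ as a sum of mean-zero second-chaos variables that are independent over disjoint dyadic subintervals so that $\|\widetilde R^{n}_{i,j}(s,t)\|_{L^2}^2$ equals the sum of their variances, $\mathbb{E}|\widetilde R^{n}_{i,j}(s,t)|^p\leq C_p|t-s|^{p/2}\Delta t_n^{p/2}$ for $s,t\in D_n$.

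For $\|B\|_\alpha$ this is classical, since $\alpha<1/2$: GRR (equivalently Kolmogorov--Chentsov, or Fernique's theorem applied to the $\alpha$-H\"older norm of the Gaussian path) gives $\mathbb{E}\|B\|_\alpha^p<\infty$ for all $p$. The objects $A$ and $\widetilde A$ are not increments of a path, but both satisfy a Chen-type relation, so the ``rough-path Kolmogorov criterion'' applies: from $\mathbb{E}|A_{i,j}(s,t)|^p\leq C_p|t-s|^p$ with exponent $1>2\alpha$ \emph{strictly}, one gets a $2\alpha$-H\"older version whose seminorm has finite $p$-th moment for every $p$ with $2\alpha<1-1/p$; letting $p\to\infty$ and using monotonicity of $L^p$ norms on a probability space yields moments of all orders for $\|A\|_{2\alpha}$, and likewise for $\|\widetilde A\|_{2\alpha}$.

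The main work --- and the main obstacle --- is $\Gamma_{\widetilde R}$, because of the additional supremum over the level $n$ and the $\Delta t_n$-dependent normalization. Here we use that $\widetilde R^{n}_{i,j}(\cdot,\cdot)$ is \emph{additive}: $\widetilde R^{n}_{i,j}(s,u)=\widetilde R^{n}_{i,j}(s,t)+\widetilde R^{n}_{i,j}(t,u)$ for $s<t<u$ in $D_n$, so it is the increment of the random walk $v\mapsto\widetilde R^{n}_{i,j}(0,v)$ on the grid $D_n$. Applying the (discrete, or linearly interpolated) GRR inequality to this walk with the increment bound above gives, for any $\epsilon'$ with $\epsilon' p>1$, a bound $\sup_{s,t\in D_n}|\widetilde R^{n}_{i,j}(s,t)|\leq\mathcal{R}_n|t-s|^{1/2-\epsilon'}$ with $\mathbb{E}\mathcal{R}_n^p\leq C_p\Delta t_n^{p/2}$. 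Since distinct points of $D_n$ are at distance at least $\Delta t_n$ and $1/2-\epsilon'-\beta<0$, one trades $|t-s|^{1/2-\epsilon'-\beta}$ for $\Delta t_n^{1/2-\epsilon'-\beta}$, obtaining
\[
\sup_{s,t\in D_n}\frac{|\widetilde R^{n}_{i,j}(s,t)|}{|t-s|^{\beta}\,\Delta t_n^{2\alpha-\beta}}\;\leq\;\mathcal{R}_n\,\Delta t_n^{\,1/2-\epsilon'-2\alpha},
\qquad
\mathbb{E}\Big(\sup_{s,t\in D_n}\frac{|\widetilde R^{n}_{i,j}(s,t)|}{|t-s|^{\beta}\Delta t_n^{2\alpha-\beta}}\Big)^{p}\leq C_p\,\Delta t_n^{(2\epsilon-\epsilon')p}.
\]
Taking $p$ large and then $1/p<\epsilon'<2\epsilon$ --- possible precisely because $\alpha<\tfrac12$, i.e.\ $\epsilon>0$, and compatible with the relations fixed in Definition~\ref{alni} --- makes the right-hand side geometric in $n$; summing over $n$ yields $\mathbb{E}\Gamma_{\widetilde R}^p<\infty$, and monotonicity in $p$ extends this to all $p\geq 1$. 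The delicate points are the uniformity in $n$ (resolved by the geometric decay above) and the bookkeeping of which exponents --- $1-2\alpha$, $2\alpha-\beta$, $2\epsilon-\epsilon'$ --- must be strictly positive, all of which is guaranteed by the choice of $\alpha,\beta,\gamma,\theta,\delta$ in Definition~\ref{alni}.
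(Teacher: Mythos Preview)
Your proof is correct and follows a genuinely different route from the paper's. The paper reduces the problem, via the inequalities $\|\tilde A\|_{2\alpha}\leq\|A\|_{2\alpha}+\|B\|_\alpha^2$ and $\Gamma_{\widetilde R}\leq\Gamma_R+\Gamma_{R-\tilde R}$, to controlling auxiliary random walks $L^n_{i,j}$ and $\tilde L^n_{i,j}$ taken from \cite{blanchet2017}; it then introduces a ``last bad level'' random variable $N_2$ (the largest $n$ at which the walk increments can exceed $(m-l)^\beta\Delta t_n^{2\alpha}$), shows $N_2$ has a finite moment-generating function via a union bound and exponential tail estimates, and bounds $\Gamma_L$ by splitting into the finitely many levels $n<N_2$ and the trivially controlled levels $n\geq N_2$. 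Your approach bypasses this machinery entirely: hypercontractivity on the second Wiener chaos plus the additivity of $\widetilde R^n$ give sharp $L^p$ increment bounds, the rough-path Kolmogorov/GRR criterion turns these into $L^p$ control of each level-$n$ supremum, and summability over $n$ follows from the geometric factor $\Delta t_n^{(2\epsilon-\epsilon')p}$. Your argument is shorter and more self-contained, and makes transparent exactly which exponent gaps are used; the paper's argument, on the other hand, yields slightly more information (finite exponential moments for the last bad level) and hooks directly into the framework of \cite{blanchet2017} that is reused elsewhere in the appendix.
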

\begin{lemma}[\textbf{Supporting Lemma}]
	\label{note lemma3} Let $X_{n}(\cdot )$ be the 
	discretization scheme in Definition~\ref{coarse}  generated under $\mu ^{(n)}(\cdot ) \in \mathcal{L}_1$ with the bounding number $L_1>1$ and Brownian motion $B(t), 0\leq t \leq 1$. Then, we can find some fixed
	polynomial function $\mathcal{P}(x )>1$ for $x>1$ $:\mathbb{R}^3 \rightarrow \mathbb{R}$ such that 
	\begin{equation*}
	\Vert X_{n}(t)-X_{n}(r)\Vert _{\infty }\leq \mathcal{P}(L_{1},\Vert B\Vert _{\alpha },\Vert \widetilde{A}\Vert _{2\alpha })\lvert
	t-r\rvert ^{\alpha }
	\end{equation*}%
	for $0\leq r \leq t \leq1$ and for all $n\geq 0$.
\end{lemma}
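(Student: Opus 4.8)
The plan is to reduce to the case where both $r=t_l^n$ and $t=t_m^n$ lie on the level-$n$ dyadic grid $D_n=\{k\Delta t_n:0\le k\le 2^n\}$ and $X_n(\cdot)$ is extended affinely between grid points; on a single mesh cell $\|X_n(t)-X_n(r)\|_\infty$ is immediately controlled by the one-step increment and $|t-r|\le\Delta t_n\le1$, so an arbitrary $0\le r\le t\le1$ follows. For grid endpoints, telescoping the recursion~\eqref{coarse} gives
\begin{equation*}
X_n(t_m^n)-X_n(t_l^n)=\sum_{k=l}^{m-1}\mu^{(n)}(X_n(t_k^n))\,\Delta t_n+\sum_{k=l}^{m-1}\Big(\sigma(X_n(t_k^n))\,\Delta B_k^n+(\partial\sigma\cdot\sigma)(X_n(t_k^n))\,\widetilde{A}(t_k^n,t_{k+1}^n)\Big),
\end{equation*}
where $(\partial\sigma\cdot\sigma)$ abbreviates the coefficient array in~\eqref{coarse}. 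The drift sum has norm at most $L_1(t_m^n-t_l^n)\le L_1|t_m^n-t_l^n|^\alpha$ by the bounding number and $|t-r|\le1$, $\alpha<1$; this is the only term in which $L_1$ enters and it does so \emph{linearly}, which is what will make the final constant polynomial (not exponential) in $L_1$.

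For the martingale-like and area sums the idea is to freeze the bounded, Lipschitz, $C^2$ coefficients at the left endpoint and telescope, in the spirit of the discrete rough-path (Davie-type) estimates of \cite{blanchet2017}. The frozen diffusion part is $\sigma(X_n(t_l^n))\big(B(t_m^n)-B(t_l^n)\big)$, of norm $\le d'L\|B\|_\alpha|t_m^n-t_l^n|^\alpha$; the frozen area part $\sum_k(\partial\sigma\cdot\sigma)(X_n(t_l^n))\widetilde{A}(t_k^n,t_{k+1}^n)$ is rewritten, via the Chen-type relations and the definition of the discretisation remainder $\widetilde{R}^n$ in Definition~\ref{add15}, in terms of interval-scale objects such as $\widetilde{A}(t_l^n,t_m^n)$, $A(t_l^n,t_m^n)$ and $\widetilde{R}^n(t_l^n,t_m^n)$, and is thereby bounded by $|t_m^n-t_l^n|^\alpha$ times a constant built from $\|\widetilde{A}\|_{2\alpha}$, $\|A\|_{2\alpha}$, $\Gamma_{\widetilde{R}}$ and the like — all of which, by (the argument of) Lemma~\ref{jose1}, have moments of all orders — using $\Delta t_n\le1$ and the exponent inequalities $2\alpha>\alpha$, $\beta>\alpha$, $2\alpha-\beta>0$ of Definition~\ref{alni}. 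The remaining coefficient-increment pieces, $\sum_k[\sigma(X_n(t_k^n))-\sigma(X_n(t_l^n))]\Delta B_k^n$ and its area analogue, are bounded (using Assumption~\ref{assump:2}) by $C\sup_{l\le k\le m-1}\|X_n(t_k^n)-X_n(t_l^n)\|_\infty$ times a sum of Brownian/area increments, so they feed the quantity being estimated back into the bound.

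To close this self-reference I would set $\Phi_n=\sup\{\|X_n(t)-X_n(r)\|_\infty/|t-r|^\alpha:r,t\in D_n,\ r<t\}$ (finite for each fixed $n$) and show, from the decomposition above, that on every sub-interval of length at most a suitable $\ell_0>0$ one has a local estimate of the form $\Phi_n^{\mathrm{loc}}\le\mathcal{P}_0+\tfrac12\Phi_n^{\mathrm{loc}}$, where $\mathcal{P}_0=\mathcal{P}(L_1,\|B\|_\alpha,\|\widetilde{A}\|_{2\alpha})$ and $\ell_0$ is a positive, polynomially-bounded-below function of the same quantities, chosen so that the coefficient-increment sums over such an interval carry a prefactor $\le\tfrac12$; hence $\|X_n(t)-X_n(r)\|_\infty\le2\mathcal{P}_0|t-r|^\alpha$ for $|t-r|\le\ell_0$, and cutting a general grid interval into $\le\lceil|t-r|/\ell_0\rceil$ consecutive pieces of length $\le\ell_0$ and summing yields $\|X_n(t)-X_n(r)\|_\infty\le 2\mathcal{P}_0\,\ell_0^{\alpha-1}|t-r|^\alpha$ uniformly in $n$, with $\ell_0^{\alpha-1}$ again polynomial in $L_1,\|B\|_\alpha,\|\widetilde{A}\|_{2\alpha}$. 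The main obstacle is exactly this area-correction sum: a term-by-term bound of $\sum_k(\partial\sigma\cdot\sigma)(X_n(t_k^n))\widetilde{A}(t_k^n,t_{k+1}^n)$ inflates by a mesh factor $\Delta t_n^{2\alpha-1}=2^{2n\epsilon}$ and, fed through Gronwall, would force a constant exponential in $L_1$; trading the raw sum for interval-scale objects via the Chen relation — so that only quantities with moments of all orders in the sense of Lemma~\ref{jose1} survive — and routing the coefficient-increment remainders through the local-to-global patching, is precisely the rough-path input that replaces Gronwall and keeps the constant polynomial in $L_1$ and uniform in the discretisation level $n$.
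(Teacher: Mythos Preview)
Your overall architecture --- a Davie/rough-path localisation with scale $\ell_0$ chosen polynomially in $(L_1,\|B\|_\alpha,\|\widetilde A\|_{2\alpha})$, followed by local-to-global patching --- is exactly the mechanism behind \cite[Lemma~6.1]{blanchet2017} that the paper invokes; the paper simply cites that lemma, checks that the constants $C_1(\delta),C_2(\delta),C_3(\delta)$ are polynomial in the data, and then handles the finitely many ``small $n$'' (with $\Delta t_n>\delta/2$) by a crude step-by-step bound.

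There is, however, a genuine gap in your treatment of the coefficient-increment pieces. You bound
\[
\Big\|\sum_{k=l}^{m-1}\big[\sigma(X_n(t_k^n))-\sigma(X_n(t_l^n))\big]\Delta B_k^n\Big\|_\infty
\;\le\; C\sup_{l\le k<m}\|X_n(t_k^n)-X_n(t_l^n)\|_\infty\cdot\big[\text{sum of Brownian increments}\big],
\]
and then want the bracketed sum, on an interval of length $\le\ell_0$, to contribute a prefactor $\le\tfrac12$. But the triangle-inequality bound $\sum_k\|\Delta B_k^n\|_\infty\le (m-l)\|B\|_\alpha\Delta t_n^\alpha=|t_m^n-t_l^n|\,\|B\|_\alpha\,\Delta t_n^{\alpha-1}$ is \emph{not} uniform in $n$: it diverges as $\Delta t_n\to0$, so no choice of $\ell_0$ independent of $n$ can absorb it. The same issue hits the area analogue. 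What is actually needed is a discrete Young/sewing estimate: since $t\mapsto\sigma(X_n(t))$ is $\alpha$-H\"older with constant $\le L\Phi_n$ and $B$ is $\alpha$-H\"older with $2\alpha>1$, one gets
\[
\Big\|\sum_{k=l}^{m-1}\big[\sigma(X_n(t_k^n))-\sigma(X_n(t_l^n))\big]\Delta B_k^n\Big\|_\infty
\;\le\; C\,L\,\Phi_n\,\|B\|_\alpha\,|t_m^n-t_l^n|^{2\alpha},
\]
which after division by $|t_m^n-t_l^n|^\alpha$ yields a feedback factor $C L\|B\|_\alpha\,\ell_0^\alpha$ that \emph{can} be made $\le\tfrac12$ by choosing $\ell_0$ polynomially small in the data, uniformly in $n$. (For the area analogue one uses $3\alpha>1$.) This is precisely the rough-path input that \cite[Lemma~6.1]{blanchet2017} packages; your freeze-and-telescope rewriting only works once this Young-type control replaces the naive term-by-term bound.

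A smaller point: you route the frozen area sum through $A$ and $\widetilde R^n$, bringing $\|A\|_{2\alpha}$ and $\Gamma_{\widetilde R}$ into the constant, whereas the lemma asks for a polynomial in only $L_1,\|B\|_\alpha,\|\widetilde A\|_{2\alpha}$. This detour is avoidable: $\widetilde A$ itself satisfies an almost-Chen relation $\widetilde A_{ij}(s,u)=\widetilde A_{ij}(s,t)+\widetilde A_{ij}(t,u)+\tfrac12[\Delta_s^tB_i\,\Delta_t^uB_j+\Delta_t^uB_i\,\Delta_s^tB_j]$, whose cross term is controlled by $\|B\|_\alpha^2|u-s|^{2\alpha}$, so the sewing argument runs directly with $\|\widetilde A\|_{2\alpha}$ and $\|B\|_\alpha$ alone.
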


\begin{lemma}[\textbf{Supporting Lemma}]
	\label{note lemma4} Let $X^{\mu}_{n}(\cdot )$ be the 
	discretization scheme modified from Definition~\ref{coarse}  generated under $\mu (\cdot ) \in \mathcal{L}_1$ (instead of $\mu^{n}(\cdot)$) 
	with the bounding number $L_1>1$ and Brownian motion $B(t), 0\leq t \leq 1$. Also, let $X_t, 0\leq t \leq 1$ be the solution of SDE in %
	\eqref{eq:SDE}. Then, we can find some fixed polynomial $\mathcal{P}(x )>1$ for $x>1$$:\mathbb{R}^4 \rightarrow \mathbb{R}$
	such that 
	\begin{equation}
	\|X_n^{\mu}(t)-X_t \|_{\infty}\leq \mathcal{P}(L_1,\|B\|_{\alpha},
	\|A\|_{2\alpha},\Gamma_{\widetilde{R}})\Delta t_n^{2\alpha-\beta}
	\end{equation}
	for all $n\geq0$ and $0\leq t \leq 1$.
\end{lemma}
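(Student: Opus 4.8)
I would prove \lemref{note lemma4} as a path-by-path (rough integral) error estimate for the Milstein-type recursion \eqref{coarse}, in the spirit of \cite{davie2008differential,friz2010multidimensional,blanchet2017}; the point, as stressed in the remark after \lemref{orderfirstlemma}, is to keep the constant only \emph{polynomial} in $L_1$ by exploiting the rough-path structure instead of iterating a one-step bound through Gr\"onwall's inequality. First I would reduce to dyadic times. Since $\mu\in\mathcal{L}_1$ with bounding number $L_1$ and the proof of \lemref{note lemma3} uses only $\mathcal{L}_1$-membership, it applies verbatim to $X_n^{\mu}$, giving $\|X_n^{\mu}(t)-X_n^{\mu}(r)\|_{\infty}\le\mathcal{P}(L_1,\|B\|_{\alpha},\|\widetilde A\|_{2\alpha})|t-r|^{\alpha}$; an analogous (standard) estimate for the true solution of \eqref{eq:SDE} gives $\|X_t-X_r\|_{\infty}\le\mathcal{P}(L_1,\|B\|_{\alpha},\|A\|_{2\alpha})|t-r|^{\alpha}$. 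Because $\beta\ge\alpha$ by \eqref{parameter} and \eqref{epsiloncondition}, we have $\Delta t_n^{\alpha}\le\Delta t_n^{2\alpha-\beta}$, so the error from replacing an arbitrary $t\in[0,1]$ by the nearest point of $D_n$ below it is $O(\Delta t_n^{2\alpha-\beta})$; hence it suffices to bound $E_k:=X_n^{\mu}(t_k^n)-X(t_k^n)$ at dyadic times, and by \lemref{jose1} we may freely trade $\|\widetilde A\|_{2\alpha}$ for $\|A\|_{2\alpha}$ plus lower-order terms.

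Next, on each subinterval $[t_k^n,t_{k+1}^n]$ I would write the It\^o--Taylor expansion of the true increment,
\begin{equation*}
X_i(t_{k+1}^n)-X_i(t_k^n)=\mu_i(X(t_k^n))\Delta t_n+\sum_{j}\sigma_{ij}(X(t_k^n))\Delta B_{j,k}^n+\sum_{j,l,m}\frac{\partial\sigma_{ij}}{\partial x_l}(X(t_k^n))\sigma_{lm}(X(t_k^n))A_{mj}(t_k^n,t_{k+1}^n)+\mathrm{Rem}_{i,k},
\end{equation*}
where $\mathrm{Rem}_{i,k}$ collects the drift double integrals, the triple iterated-integral term, and the second-order Taylor remainder of $\sigma(X_r)$ about $X(t_k^n)$; using Assumption~\ref{assump:2}, the H\"older regularity of $X$, the bounds $\|B\|_{\alpha},\|A\|_{2\alpha}$, and the inequalities $\alpha+\beta>1$, $2\alpha<1$, $3\alpha>1$ implied by \eqref{parameter} and \eqref{epsiloncondition}, one gets $\|\mathrm{Rem}_{i,k}\|\le\mathcal{P}(L_1,\|B\|_{\alpha},\|A\|_{2\alpha})\Delta t_n^{3\alpha}$. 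Subtracting \eqref{coarse} (with $\mu$ in place of $\mu^{(n)}$) from this identity, the increment $E_{k+1}-E_k$ splits into: (i) the Lipschitz ``feedback'' terms $[\mu(X_n^{\mu}(t_k^n))-\mu(X(t_k^n))]\Delta t_n$, $[\sigma(X_n^{\mu}(t_k^n))-\sigma(X(t_k^n))]\Delta B_k^n$, and $([\partial\sigma\,\sigma](X_n^{\mu}(t_k^n))-[\partial\sigma\,\sigma](X(t_k^n)))\widetilde A(t_k^n,t_{k+1}^n)$, writing $[\partial\sigma\,\sigma]$ for the coefficient $\sum_{l,m}(\partial\sigma_{ij}/\partial x_l)\sigma_{lm}$; (ii) the genuine discretization discrepancy $[\partial\sigma\,\sigma](X(t_k^n))(\widetilde A(t_k^n,t_{k+1}^n)-A(t_k^n,t_{k+1}^n))$; and (iii) $-\mathrm{Rem}_{i,k}$.

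The core is the summation $E_m=\sum_{k=0}^{m-1}(E_{k+1}-E_k)$. Term (iii) sums to $O(2^n\Delta t_n^{3\alpha})=O(\Delta t_n^{3\alpha-1})$, and $3\alpha-1>2\alpha-\beta$, so it is below the target rate. Term (ii) is handled by summation by parts: the partial sums of $\widetilde A-A$ over a dyadic sub-path are precisely $-\widetilde R^n(\cdot,\cdot)$, which by Definition~\ref{add15} satisfy $\|\widetilde R^n(s,t)\|\le\Gamma_{\widetilde R}|t-s|^{\beta}\Delta t_n^{2\alpha-\beta}$, while the coefficient path $t_k^n\mapsto[\partial\sigma\,\sigma](X(t_k^n))$ is $\alpha$-H\"older with norm $\le\mathcal{P}(L_1,\|B\|_{\alpha},\|A\|_{2\alpha})$ (chain rule and Assumption~\ref{assump:2}); Abel summation then bounds (ii) by $\mathcal{P}(L_1,\|B\|_{\alpha},\|A\|_{2\alpha},\Gamma_{\widetilde R})\Delta t_n^{2\alpha-\beta}$ --- this is where $\Gamma_{\widetilde R}$ enters and why the rate is exactly $\Delta t_n^{2\alpha-\beta}$. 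The feedback sums in (i) are treated as compensated (Davie/sewing) Riemann sums of the controlled rough path $\sigma(X_n^{\mu}(\cdot))-\sigma(X(\cdot))$ against $(B,\widetilde A)$, with Gubinelli derivative $[\partial\sigma\,\sigma](X_n^{\mu})-[\partial\sigma\,\sigma](X)$; they are controlled by $\max_{k}\|E_k\|$, the $\alpha$-H\"older increments of $E$, and their controlled-path remainder, times $\mathcal{P}(L_1,\|B\|_{\alpha},\|A\|_{2\alpha})$.

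Finally I would close the estimate. Assembling the bounds yields a Gr\"onwall-type inequality for $\sup_{k\le m}\|E_k\|$; the point is to resolve it \emph{without} iterating the one-step bound at the fine scale $\Delta t_n$ (which would blow up, as $\sum_k\|\Delta B_k^n\|\asymp\Delta t_n^{\alpha-1}$, and would force the exponential-in-$L_1^{p}$ constant of a crude Gr\"onwall). Instead one runs the comparison on mesh-independent blocks, using the a priori H\"older bounds for $X$ and $X_n^{\mu}$ from the first paragraph to close the loop on each block with a constant that stays polynomial in $L_1$, and patches the $O(1)$ blocks of $[0,1]$; for the finitely many small $n$ where no absorption is available, the bound is trivial from those same a priori estimates, since $\Delta t_n^{2\alpha-\beta}$ is then bounded below. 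Transferring the dyadic-time bound to arbitrary $t$ via the first-paragraph interpolation finishes the proof. The main obstacle --- the genuinely non-standard part drawn from \cite{blanchet2017} --- is precisely items (i)--(ii): organizing the summation-by-parts and rough-integral estimates so that the $A$-versus-$\widetilde A$ error is governed by $\Gamma_{\widetilde R}$ at the sharp rate $\Delta t_n^{2\alpha-\beta}$, and so that the accumulation of the Lipschitz feedback keeps the constant polynomial in $L_1$ rather than producing $e^{CL_1^{p}}$; this is exactly the trade in which one ``gives up $\epsilon$'' through $\alpha=\tfrac12-\epsilon$, $\beta=\tfrac12+2\epsilon$.
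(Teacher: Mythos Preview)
Your proposal is correct and follows the same rough-path approach as the paper; in fact the paper's own proof is considerably terser than yours. The paper simply invokes Theorem~2.1 of \cite{blanchet2017} (proved there via Propositions~6.1--6.2), observes that replacing their $\hat A$ by $\widetilde A$ (and hence their $\Gamma_R$ by $\Gamma_{\widetilde R}$) leaves the argument unchanged, and then asserts that the resulting constant $G$ is polynomial because its explicit construction in Section~2.2 of \cite{blanchet2017} ``only involves multiplication and addition among the variables $L_1,\|B\|_{\alpha},\|A\|_{2\alpha},\Gamma_{\widetilde R}$ and constants.'' The extension to small $n$ is handled by the same trivial counting argument you give, exactly as in the proof of \lemref{note lemma3}. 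Your sketch is effectively a reconstruction of the cited argument: the It\^o--Taylor/Davie expansion, the Abel summation that brings in $\Gamma_{\widetilde R}$ at rate $\Delta t_n^{2\alpha-\beta}$, and the block-wise closure are precisely the content of \cite{blanchet2017}'s Section~6.

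One point deserves care. Your phrase ``patches the $O(1)$ blocks of $[0,1]$'' is imprecise: the block length $\delta$ is $(\mathcal{P}(L_1,\|B\|_{\alpha},\|\widetilde A\|_{2\alpha}))^{-1}$, so the number of blocks is polynomial in those quantities, not bounded. The delicate issue---which you rightly flag as ``the genuinely non-standard part''---is to ensure that propagating the error over polynomially many blocks does not produce a multiplicative factor like $2^{1/\delta}$, which would be exponential in $L_1$. The paper does not spell this out either; it defers entirely to the explicit constant tracking in \cite{blanchet2017}. If you want a self-contained argument, you would need to reproduce the specific additive error accumulation from Propositions~6.1--6.2 there, rather than a generic Davie-type doubling per block.
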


\begin{lemma}[\textbf{Supporting Lemma}]
	\label{lcyes} Let $X_{n+1}(1)$ and $X^a_{n+1}(1)$ be defined as in Definition~\ref{cdefn} and \ref{adefn} generated under fixed $\mu ^{(n+1)}(\cdot ) \in \mathcal{L}_1$ with the bounding number $L_1>1$. Then, we can find some fixed polynomial $\mathcal{P}(x )>1$ for $x>1$ such that 
	\begin{equation}  \label{note lemma 5}
	\mathbb{E}[\|X_{n+1}(1)-X_{n+1}^{a}(1)\|_{\infty}^8] \leq 
	\mathcal{P}(L_1)\Delta t_n^{8(2\alpha-\beta)}.
	\end{equation}
\end{lemma}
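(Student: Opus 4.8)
The plan is to prove the bound by a four-link triangle inequality that connects the fine and antithetic level-$(n+1)$ schemes through the two exact SDE solutions they approximate and through the common level-$n$ scheme, and then to take eighth moments using the arbitrary-order moment bounds of the rough-path quantities from Lemma~\ref{jose1}.

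First I would fix notation. Let $B^{a}$ denote the Brownian motion whose level-$(n+1)$ increments are the antithetic increments $\{\Delta B^{n+1,a}_k\}$ of Definition~\ref{adefn}; since these are obtained from $B$ by swapping adjacent i.i.d.\ Brownian pieces (as in \cite{giles2014}), $B^{a}$ has the same law as $B$, so the quantities $\|B^{a}\|_{\alpha}$, $\|A^{a}\|_{2\alpha}$ and $\Gamma_{\widetilde{R}^{a}}$ built from $B^{a}$ have the same, hence arbitrary-order-finite, moments as their $B$-counterparts by Lemma~\ref{jose1}. Let $Y^{B}$ and $Y^{B^{a}}$ be the unique strong solutions of the SDE \eqref{eq:SDE} with drift field $\mu^{(n+1)}\in\mathcal{L}_1$ driven by $B$ and by $B^{a}$ respectively, and let $\tilde X_n$ be the level-$n$ scheme of Definition~\ref{cdefn} run with drift $\mu^{(n+1)}$. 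The key structural observation is that $\tilde X_n$ does not depend on whether the driving path is $B$ or $B^{a}$: the level-$n$ recursion uses only the coarse increments $\Delta B^n_k$ and the quantities $\widetilde A(t^n_k,t^n_{k+1})$ formed from them, and $B$ and $B^{a}$ have identical coarse increments by identity \eqref{trihere}. Moreover $X_{n+1}(1)$ is exactly the level-$(n+1)$ scheme of the type treated in Lemma~\ref{note lemma4}, with drift $\mu^{(n+1)}$ driven by $B$, while $X_{n+1}^{a}(1)$ is the same scheme driven by $B^{a}$.

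Next I would apply Lemma~\ref{note lemma4} along the chain
\begin{equation*}
X_{n+1}(1)\;\longrightarrow\;Y^{B}_1\;\longrightarrow\;\tilde X_n(1)\;\longrightarrow\;Y^{B^{a}}_1\;\longrightarrow\;X_{n+1}^{a}(1),
\end{equation*}
where the two outer links are level-$(n+1)$ discretisation errors and the two inner links are level-$n$ discretisation errors (the inner right link using that $\tilde X_n$ is also the level-$n$ scheme of $Y^{B^{a}}$). Writing $\mathcal{P}_B\triangleq\mathcal{P}(L_1,\|B\|_{\alpha},\|A\|_{2\alpha},\Gamma_{\widetilde{R}})$ and $\mathcal{P}_{B^{a}}$ for its $B^{a}$-analogue, with $\mathcal{P}(\cdot)$ the universal polynomial of Definition~\ref{PandC}, and using $\Delta t_{n+1}\le\Delta t_n$ together with $2\alpha-\beta>0$, the triangle inequality gives
\begin{equation*}
\|X_{n+1}(1)-X_{n+1}^{a}(1)\|_{\infty}\;\le\;2\big(\mathcal{P}_B+\mathcal{P}_{B^{a}}\big)\,\Delta t_n^{2\alpha-\beta}.
\end{equation*}
Raising to the eighth power, using $(a+b)^8\le 2^7(a^8+b^8)$, taking expectations, and invoking the equality in law of the $B$- and $B^{a}$-quantities reduces everything to bounding $\mathbb{E}[\mathcal{P}_B^8]$; expanding the polynomial and applying H\"older's inequality together with Lemma~\ref{jose1} shows this expectation is finite and is itself a polynomial in $L_1$, which by the closure rules of Definition~\ref{PandC} can be absorbed into a single $\mathcal{P}(L_1)$ with $\mathcal{P}(x)>1$ for $x>1$. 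This is the claimed bound $\mathbb{E}[\|X_{n+1}(1)-X_{n+1}^{a}(1)\|_{\infty}^8]\le\mathcal{P}(L_1)\,\Delta t_n^{8(2\alpha-\beta)}$.

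The step requiring the most care is choosing the right bridge. A direct triangle inequality through a single exact SDE solution fails, because $B$ and $B^{a}$ are not close as $\alpha$-H\"older rough paths; and a path-by-path recursion for $X_{n+1}-X_{n+1}^{a}$ cannot be closed by summing the antithetic discrepancies term by term, since the per-coarse-step discrepancy is only of order $\Delta t_n^{2\alpha}$ and the naive sum of $2^n$ such terms diverges. Routing instead through the driving-path-agnostic level-$n$ scheme $\tilde X_n$ turns every link of the chain into a genuine discretisation error already controlled pathwise by Lemma~\ref{note lemma4}, and this is exactly where identity \eqref{trihere} is used; the remainder of the argument is bookkeeping with the polynomial notation of Definition~\ref{PandC} and the moment bounds of Lemma~\ref{jose1}.
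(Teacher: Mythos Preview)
Your proposal is correct and follows essentially the same route as the paper: the paper also bridges $X_{n+1}(1)$ and $X_{n+1}^{a}(1)$ through the level-$n$ scheme run with drift $\mu^{(n+1)}$ (which is invariant under the swap $B\mapsto B^{a}$ by \eqref{trihere}) and the two exact SDE solutions $X(1;\mu^{(n+1)},B)$ and $X(1;\mu^{(n+1)},B^{n+1,a})$, applies Lemma~\ref{note lemma4} to each of the four links, and then takes eighth moments via Lemma~\ref{jose1}. The only cosmetic difference is that the paper explicitly constructs $B^{a}$ through the Levy--Ciesielski representation (Lemma~\ref{lcyes1}) rather than asserting its existence, but the argument is otherwise identical.
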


\subsection{Proof of Lemma~\protect\ref{orderfirstlemma}}

\begin{proof}[Proof of Lemma ~\ref{orderfirstlemma}]
	Let $X_{t}, 0\leq t \leq 1$ be the solution of the SDE under $\mu(\cdot) \in \mathcal{L}_1$ with the bounding number $L_1>1$ and $X_n(t)$ be the 
	discretization scheme in Definition~\ref{coarse}  generated under $\mu ^{(n)}(\cdot ) \in \mathcal{L}_1$ with the bounding number $L_1>1$. Additionally, let $X^\mu_n(\cdot)$  be the 
	discretization scheme modified from Definition~\ref{coarse}  generated under $\mu (\cdot ) \in \mathcal{L}_1$ with the bounding number $L_1>1$ instead of $\mu ^{n}(\cdot ) $. Then, for $0\leq t \leq 1$,
	we have the following bound on $\|X_n(t)-X_t\|_{\infty}$,
	\begin{equation}\label{eq:tri-bound}
	\|X_n(t)-X_t\|_{\infty} \leq \|X_n(t)-X_{n}^{\mu}(t)\|_{\infty}+\|X_{n}^{\mu}(t)-X_t\|_{\infty} 
	.\end{equation}
	In order to prove Lemma~\ref{orderfirstlemma}, we provide bounds for both $\|X_n(t)-X_{n}^{\mu}(t)\|_{\infty}$ and $\|X_{n}^{\mu}(t)-X_t\|_{\infty}$.
	
	For $\|X_{n}^{\mu}(t)-X_t\|_{\infty}$, using Lemma~\ref{note lemma4}, we can find a polynomial $\mathcal{P}(x )>1$ for $x>1$ such that
	\begin{equation}
	\|X_{n}^{\mu}(t)-X_t\|_{\infty} \leq  \mathcal{P}(L_1,\|B\|_{\alpha},
	\|A\|_{2\alpha},\Gamma_{\widetilde{R}})\Delta t_n^{2\alpha-\beta}
	.\end{equation}
	Similarly, using Lemma~\ref{PandC}, we can further find some polynomial $\mathcal{P}(x )>1$ for $x>1$ such that
	\begin{equation}
	\|X_{n}^{\mu}(t)-X_t\|^4_{\infty} \leq  \mathcal{P}(L_1,\|B\|_{\alpha},
	\|A\|_{2\alpha},\Gamma_{\widetilde{R}})\Delta t_n^{4(2\alpha-\beta)}
	.\end{equation}
	 It follows from Lemma~\ref{jose1} that the quantities associated with Brownian motions $\|B\|_{\alpha}$,$\|A\|_{2\alpha}$ and $\Gamma_{\widetilde{R}}$  have moments of arbitrary order. Thus, fixing $\mu(\cdot) \in \mathcal{L}_1$, we can find some polynomial  $\mathcal{P}'(x )>1$ for $x>1$ such that
	\begin{align}\label{ginal1}
	\mathbb{E}\|X^{\mu}_n(t)-X_t\|_{\infty}^4 \leq& \mathbb{E}[\mathcal{P}(L_1,\|B\|_{\alpha},
	\|A\|_{2\alpha},\Gamma_{\widetilde{R}})]\Delta t_n^{4(2\alpha-\beta)}\nonumber\\
	 \leq& \mathcal{P}'(L_1)\Delta t_n^{4(2\alpha-\beta)}\nonumber\\
	 \leq & e^{CL_1}\Delta t_n^{4(2\alpha-\beta)}
	,\end{align}
	for some constant $C>1$ since  $L_1>1$.
	Combining this with \eqref{eq:tri-bound}, we have
	\begin{equation}\label{lfp2}
	\mathbb{E}\|X_n(t)-X_t\|^4_{\infty} \leq 8 \mathbb{E}\|X_n(t)-X_{n}^{\mu}(t)\|^4_{\infty}+ 8e^{CL_1}\Delta t_n^{4(2\alpha-\beta)}.
	\end{equation}
	Thus, we can complete the proof if we can show
	\begin{equation}\label{eq:1sufficient}
	\mathbb{E}\|X_n(t)-X_{n}^{\mu}(t)\|^4_{\infty} \leq e^{CL_1} \Delta t_n^{4\alpha}
	,\end{equation}
	for some $C>1$. This is because we have, according to Lemma~\ref{alni}, 
	\begin{equation}
	4\alpha=2-4\epsilon >2-16\epsilon=4(2\alpha-\beta)
	\end{equation}
and thus \eqref{eq:1sufficient} would imply
	\begin{equation}\label{lfp1}
	\mathbb{E}\|X_n(t)-X_{n}^{\mu}(t)\|^4_{\infty} \leq e^{CL_1} \Delta t_n^{4(2\alpha-\beta)},
	\end{equation}
	since $\Delta t_n<1$. Finally, we can simply conclude the proof using \eqref{lfp2} and \eqref{lfp1} by adjusting the constant $C$.
	To prove \eqref{eq:1sufficient}, we define 
	\begin{equation}
	\bar{\mu}^{(n)}(\cdot) \triangleq\mu-\mu^{(n)}
	= \sum_{i=\lfloor 2^{n\gamma} \rfloor+1}^{\infty} \frac{\lambda_i}{i^{q}} V_i(\omega)\psi_i(\cdot)
	\end{equation}
	to be the remaining sum when we approximate $\mu$ by $\mu^{(n)}$. In Section~\ref{lookhere} of the proof of Lemma~\ref{lemma L1}, we will show that
	\begin{equation}\label{remainingfield}
	\|\bar{\mu}^{(n)}(\cdot)\|_{\infty} \leq L_1\Delta t_n^{3+\frac{q-4}{2}}.
	\end{equation}
Then, we may conduct the analysis on $\|X_n(t)-X_n^{\mu^{(n)}}(t)\|_{\infty}$ based on the following recursion: for $1 \leq i \leq d$, $0 \leq k \leq 2^n-1$,
	\begin{align}\label{first big}
	&X_{i,n}^{\mu}(t_{k+1}^{n})-X_{i,n}(t_{k+1}^n)\nonumber\\
	=&X_{i,n}^{\mu}(t_{k}^n)-X_{i,n}(t_{k}^n) 
	+\Big( \mu_i^{(n)}(X^\mu_n(t_k^n))-\mu_i^{(n)}(X_n(t_k^n))\Big)\Delta t_n 
	+\bar{\mu}^{(n)}(X_{n}^{\mu}(t_k^n))\Delta t_n  \nonumber\\
	&+\sum_{j=1}^d \sigma_{ij}(X_{n}^{\mu}(t_k^n))-\sigma_{ij}(X_n(t_k^n))\Delta B_{j,k}^n\nonumber \\
	&+\sum_{j=1}^{d^{\prime}}\sum_{l=1}^d\sum_{m=1}^{d^{\prime}}\Big( \frac{\partial \sigma_{ij}}{\partial x_l}(X_{n}^{\mu}(t_{k}^n))\sigma_{lm}(X_{n}^{\mu}(t_{k}^n))-\frac{\partial \sigma_{ij}}{\partial x_l}(X_{n}(t_{k}^n))\sigma_{lm}(X_{n}(t_{k}^n))\Big)\widetilde{A}_{mj}(t_{k}^n,t_{k+1}^n),
	\end{align}
	which is obtained by modifying \eqref{coarse} and simply taking the difference. Now, let
	\begin{equation}\label{lkkhhh}
\xi_{n,k}  \triangleq X_{n}^{\mu}(t_{k}^n)-X_{n}(t_{k}^n) 	\qquad \text{and} \qquad \xi_{i,n,k} \triangleq X_{i,n}^{\mu}(t_{k}^{n})-X_{i,n}(t_{k}^n) 
	\end{equation}
	for $1 \leq i \leq d$ and $0 \leq k \leq 2^n$ and let 
	\begin{align}\label{step order}
	\eta_{i,n,k} 
	\triangleq&\big( \mu_i^{(n)}(X^\mu_n(t_k^n))-\mu_i^{(n)}(X_n(t_k^n))\big)\Delta t_n +\bar{\mu}^{(n)}(X_{n}^{\mu}(t_k^n))\Delta t_n  \nonumber\\
	 &+\sum_{j=1}^d \Big(\sigma_{ij}(X_{n}^{\mu}(t_k^n))-\sigma_{ij}(X_n(t_k^n))\Big)\Delta B_{j,k}^n\nonumber \\
	&+\sum_{j=1}^{d^{\prime}}\sum_{l=1}^d\sum_{m=1}^{d^{\prime}}\Big( \frac{\partial \sigma_{ij}}{\partial x_l}(X_{n}^{\mu}(t_{k}^n))\sigma_{lm}(X_{n}^{\mu}(t_{k}^n))-\frac{\partial \sigma_{ij}}{\partial x_l}(X_{n}(t_{k}^n))\sigma_{lm}(X_{n}(t_{k}^n))\Big)\widetilde{A}_{mj}(t_{k}^n,t_{k+1}^n)
	,\end{align}
	so that \eqref{first big} becomes, for $1 \leq i \leq d$, $0 \leq k \leq 2^n-1$, 
	\begin{equation}\label{add4}
	\xi_{i,n,k+1} =\xi_{i,n,k} + \eta_{i,n,k}.
	\end{equation}
	Fixing ${\mu}\in\mathcal{L}_1$ and $\mu^{(n)}\in\mathcal{L}_1$ with bounding number $L_1>1$ and taking expectation on \eqref{add4} after raising it to the fourth power, we have
	\begin{align}
\mathbb{E}(\xi_{i,n,k+1} ^4)  =& \mathbb{E}(\xi_{i,n,k} ^4)+\EEb(\eta_{i,n,k} ^4)+3\mathbb{E}(\xi_{i,n,k} ^3\eta_{i,n,k} )+3\mathbb{E}(\xi_{i,n,k} \eta_{i,n,k} ^3)+6\mathbb{E}(\xi_{i,n,k} ^2\eta_{i,n,k} ^2).\label{first big 2}
	\end{align}
	It now follows from the definition of $\xi_{i,n,k}$ in \eqref{lkkhhh} that it is sufficient to show 
	\begin{equation}\label{eq:sufficient}
	\mathbb{E}\|X_n(t)-X_{n}^{\mu}(t)\|^4_{\infty}=\mathbb{E}\|\xi_{n,2^n}\|_{\infty}^4\leq e^{CL_1} \Delta t_n^{4\alpha},
	\end{equation}
	for some constant $C>1$. Thus, in what follows, we focus on the proof of \eqref{eq:sufficient}, which consists of proofs for the following two statements:  fixing ${\mu}\in\mathcal{L}_1$ and $\mu^{(n)}\in\mathcal{L}_1$ with bounding number $L_1>1$, 
	\begin{itemize}
		\item (I) We prove that there exists a constant $C>1$ and a polynomial $\mathcal{P}(x )>1$ for $x>1$ such that for $1 \leq i \leq d$ and $0 \leq k \leq 2^n$, we have
			\begin{equation}
			\mathbb{E}\abs{\xi_{i,n,k}}^4 \leq e^{CL_1\cdot t_k^n}  \Delta t_n^{4\alpha}
			\end{equation}
			if $n$ is large enough so that $2^{n} > \mathcal{P}(L_1)$.
		\item	(II) We prove that there is a polynomial $\mathcal{P}'(x )>1$ for $x>1$ such that for $1 \leq i \leq d$ and $0 \leq k \leq 2^n$, we have
		\begin{equation}
		\mathbb{E}\abs{\xi_{i,n,k}}^4\leq \mathcal{P}'(L_1)\Delta t_n^{4\alpha},
		\end{equation}
		if $n$ is not large enough and $2^{n} \leq \mathcal{P}(L_1)$.
	\end{itemize}

	\paragraph{Proof of statement (I)}
	Fixing ${\mu}\in\mathcal{L}_1$ and $\mu^{(n)}\in\mathcal{L}_1$ with bounding number $L_1>1$, we use induction on $0 \leq k \leq 2^n$. First of all, when $k=0$, for $1 \leq i \leq d$, the claim holds since $\xi_{i,n,0}= X_{i,n}^{\mu}(0)-X_{i,n}(0) =x-x=0$.
	
	Next, for $0 < k \leq 2^n-1$ , assume that the induction hypothesis holds so that whenever $0 \leq j \leq k$,
	\begin{equation}\label{inductionassumption}
	\mathbb{E}\abs{\xi^4_{i,n,j}} \leq e^{CL_1 t_j^n}\cdot \Delta t_n^{4\alpha}
	\end{equation}
	for $1 \leq i \leq d$ and some $C>1$. Our goal is to show 
	\begin{equation}
	\mathbb{E}\abs{\xi^4_{i,n,k+1}} \leq e^{CL_1 t_{k+1}^n}\cdot \Delta t_n^{4\alpha}
	\end{equation}	
	for all $1\leq i \leq d$. To do this, we provide  bounds for every term on the right hand side of \eqref{first big 2}. For $\eta^4_{i,n,k}$, according to Definition~\ref{jose0} and denoting $\bar{d}= \max\{d,d'\}$, we have
	\begin{align} \label{etabound}
	\abs{\eta_{i,n,k}}
	\leq& \|\partial{\mu_i^{(n)}}\|_{\infty}\|\xi_{n,k}\|_{\infty}\Delta t_n+\|\bar{\mu}^{(n)}\|_{\infty}\Delta t_n+\bar{d}L\|\xi_{n,k}\|_{\infty}\|B\|_{\alpha}\Delta t_n^{\alpha}\nonumber\\
	&+{\bar{d}}^3L\|\xi_{n,k}\|_{\infty}\|A\|_{2\alpha}\Delta t_n^{2\alpha} \nonumber \\
	\leq& L_1\|\xi_{n,k}\|_{\infty}\Delta t_n+L_1\Delta t_n^{4+\frac{q-4}{2}}+\bar{d}L\|\xi_{n,k}\|_{\infty}\|B\|_{\alpha}\Delta t_n^{\alpha}\nonumber\\
	&+{\bar{d}}^3L\|\xi_{n,k}\|_{\infty}\|A\|_{2\alpha}\Delta t_n^{2\alpha}
	\end{align}
	where the last line follows from \eqref{remainingfield}. Since $\xi_{n,k}$ and the shifted Brownian motion on $B(t)-B(t_k^n), t_k^n \leq t \leq t_{k+1}^n$ are independent of each other (i.e., independent increments of Brownian motion), we can consider quantities $\|B\|_{\alpha}$ and $\|A\|_{2\alpha}$ to be associated with the new Brownian motion $B(t)-B(t_k^n), t_k^n \leq t \leq t_{k+1}^n$ and thus independent of $\xi_{n,k}$. 	Consequently, it then follows from Lemma~ \ref{jose1} that we can find a constant $C' > 1$ such that
	\begin{align} \label{rhs1}
	\mathbb{E}\eta_{i,n,k} ^4  &\leq C'\bigg(L_1^4\mathbb{E}(\|\xi_{n,k}\|_{\infty}^4)\Delta t_n^{4}+L_1^4\Delta t_n^{16+2(q-4)}+\mathbb{E}(\|\xi_{n,k}\|_{\infty}^4)\Delta t_n^{4\alpha}+\mathbb{E}(\|\xi_{n,k}\|_{\infty}^4)\Delta t_n^{8\alpha}\bigg)\nonumber \\
	& \leq C''L_1^4\bar{d}^4e^{CL_1\cdot t_{k}^n}\cdot \Delta t_n^{8\alpha}
	\end{align}
	for some $C''>1$ where the last line follows from both the induction hypothesis and the fact that $8\alpha < 16+2(q-4)$ in Definition \ref{alni}. 
	
	For the bound on $\mathbb{E}(\xi_{i,n,k} ^3\eta_{i,n,k} )$ in \eqref{first big 2}, we observe the terms in \eqref{step order} and  use \eqref{remainingfield} along with the martingale property (i.e., the independence of $\Delta B_k^n$ and $X_n(t_k^n)$) to obtain
	\begin{align}\label{rhs2}
	&\mathbb{E}(\xi_{i,n,k} ^3\eta_{i,n,k} ) \nonumber \\
	=&\mathbb{E}\big[\big(X_{i,n}^{\mu}(t_{k}^n)-X_{i,n}^{\mu^{(n)}}(t_{k}^n)\big)^3 \cdot \Big(\big( \mu_i^{(n)}(X^\mu_n(t_k^n))-\mu_i^{(n)}(X^{\mu^{(n)}}_n(t_k^n))\big)\Delta t_n 
	+\bar{\mu}^{(n)}(X_{n}^{\mu}(t_k^n))\Delta t_n\Big)\big]\nonumber \\
	\leq & \mathbb{E}[L_1\|\xi_{n,k} \|_{\infty}^4\Delta t_n] +\mathbb{E}[L_1\|\xi_{n,k} \|_{\infty}^3\Delta t_n^{4+\frac{q-4}{2}}] \nonumber \\
	\leq& \bigg(\mathbb{E}(\|\xi_{n,k} \|_{\infty}^4)+\mathbb{E}(\|\xi_{n,k} \|_{\infty}^4)^{\frac{3}{4}}\Delta t_n^{\alpha}\bigg) L_1\Delta t_n 
	\leq 2L_1\bar{d}^4e^{CL_1\cdot t_k^n} \Delta t_n^{4\alpha+1}. \nonumber\\
	\end{align}
 The last inequality follows from induction hypothesis, the second inequality follows from H\"older's inequality and the fact that $\alpha < 4+\frac{q-4}{2}$ as in Definition \ref{alni}, and the first inequality follows from the bound on $\|\partial{\mu_i^{(n)}}\|_{\infty}$ in Assumption~\ref{assump:1}.  
 
 For the bound on $\mathbb{E}(\xi_{i,n,k} ^2\eta_{i,n,k} ^2 )$, using the bound on $\abs{\eta_{i,n,k}}$ in \eqref{etabound} and the fact that {$\mathbb{E}[\big(B(t)-B(s)\big)^2]=O(|t-s|)$} and $\mathbb{E}[\big(\widetilde{A}_{ij}(s,t)\big)^2]=O((t-s)^2)$ (see, for example, \cite{karatzas2012brownian}), we can find some $C' > 1$ that
	\begin{align}\label{rhs3}
	\mathbb{E}(\xi_{i,n,k} ^2\eta_{i,n,k} ^2 ) 
	\leq &C'\Big( \mathbb{E}({\|\xi_{n,k} \|_{\infty}^4}) L_1^2 \Delta t_n^2 +\mathbb{E}({\|\xi_{n,k} \|_{\infty}^2}) L_1^2 \Delta t_n^{4+q}\\
	&+ \mathbb{E}({\|\xi_{n,k} \|_{\infty}^4})  \Delta t_n+ \mathbb{E}({\|\xi_{n,k} \|_{\infty}^4}) {\Delta t_n^2}\Big) \nonumber \\
	\leq &C'\mathbb{E}(\|\xi_{n,k}\|^4_{\infty})\Delta t_n\big(L_1^2\Delta t_n+2\big)+C'\mathbb{E}(\|\xi_{n,k}\|^4_{\infty})^{\frac{1}{2}}\Delta t_n^{2\alpha+1}\big(L_1^2(\omega)\Delta t_n\big) \nonumber \\
	\leq & 2C''\bar{d}^4e^{CL_1\cdot t_k^n} (L_1^2\Delta t_n +1) \Delta t_n^{4\alpha+1}
	\end{align}
	for some $C''>1$. The last line follows from induction hypothesis. The second to last line follows from H\"older's inequality and the fact that $2\alpha+2 < 4+q$ as in Definition~\ref{alni}. 
	Finally, to bound $\mathbb{E}(\xi_{i,n,k} \eta_{i,n,k} ^3)$ in \eqref{first big 2}, following similar techniques, we use inequality \eqref{rhs1}, induction hypothesis and H\"older's inequality to obtain
	\begin{align}\label{rhs4}
	\mathbb{E}(\xi_{i,n,k} \eta_{i,n,k} ^3)  
	\leq& (\mathbb{E}(\xi_{i,n,k} ^4))^{\frac{1}{4}}(\mathbb{E}(\eta_{i,n,k} ^4))^{\frac{3}{4}} 
	\leq C''L_1^3\bar{d}^3e^{CL_1\cdot t_k^n} \Delta t_n^{7\alpha}.
	\end{align}
	Now we are ready to prove the induction hypothesis. Let
	\begin{equation}\label{push1}  C=12C''\bar{d}^4+6\bar{d}^4+1 \quad \text{and} \quad \mathcal{P}(x)=\bigg(C''(x^4\bar{d}^4+3x^3\bar{d}^3+12x^2\bar{d}^4)\bigg)^3.
	\end{equation}
	It is easy to check that $C>1$ and the polynomial $\mathcal{P}(x )>1$ for $x>1$. Then, it follows from Definition~\ref{alni} and standard calculation that if $n$ is large enough that $2^n > \mathcal{P}(L_1)$(i.e., $\Delta t_n < (\mathcal{P}(L_1))^{-1}$), then
	\begin{align}\label{push2}
	&C''L_1^4\bar{d}^4 \Delta t_n^{4\alpha-1}+   3C''L_1^3\bar{d}^3 \Delta t_n^{3\alpha-1}+12C''L_1^2\bar{d}^4\Delta t_n  \nonumber\\
	\leq & (C''L_1^4\bar{d}^4 +   3C''L_1^3\bar{d}^3 +12C''L_1^2\bar{d}^4)\Delta t_n^{3\alpha-1}\nonumber\\
	=& (\mathcal{P}(L_1))^{\frac{1}{3}}\Delta t_n^{3\alpha-1}
	< (\mathcal{P}(L_1))^{\frac{4}{3}-3\alpha}<1
	\end{align}
	where the last inequality follows from the fact that $0>\frac{4}{3}-3\alpha$, $L_1>1$ and $\mathcal{P}(x)>1$ for $x>1$. Thus, for $n$ such that $2^n > \mathcal{P}(L_1)$,  we use \eqref{first big 2}, H\"older's inequality and the bound acquired in \eqref{rhs1}, \eqref{rhs2}, \eqref{rhs3} and \eqref{rhs4} to get
	\begin{align}
	\mathbb{E}(\xi_{i,n,k+1} ^4) 
	=& \mathbb{E}(\xi_{i,n,k} ^4)+\EEb(\eta_{i,n,k} ^4)+3\mathbb{E}(\xi_{i,n,k} ^3\eta_{i,n,k} )+3\mathbb{E}(\xi_{i,n,k} \eta_{i,n,k} ^3)+6\mathbb{E}(\xi_{i,n,k} ^2\eta_{i,n,k} ^2)\nonumber \\
	\leq &e^{CL_1\cdot t_k^n}\Delta t_n^{4\alpha}   \big(1+ C''L_1^4\bar{d}^4 \Delta t_n^{4\alpha}+ 6L_1\bar{d}^4 \Delta t_n+  3C''L_1^3\bar{d}^3 \Delta t_n^{3\alpha}+12C''\bar{d}^4 (L_1^2\Delta t_n +1) \Delta t_n	\big) \nonumber \\
	\leq & e^{CL_1 t_k^n}\Delta t_n^{4\alpha}  \big(1+(6\bar{d}^4+12C''\bar{d}^4+1)L_1\Delta t_n\big) \nonumber \\
	=&e^{CL_1 t_k^n}\Delta t_n^{4\alpha}  (1+CL_1\Delta t_n) \leq e^{CL_1 t_{k+1}^n}\Delta t_n^{4\alpha}
	\end{align}
	where the last line follows from convexity of exponential function: $e^y \geq e^x + e^x\cdot(y-x)$ for $y\geq x$. The second to last inequality follows from \eqref{push1}, \eqref{push2} and the fact that $L_1>1$. This concludes the induction. However, since $t_n^k \leq 1$ for all $0 \leq k \leq 2^n$, we have actually proven that when $2^n > \mathcal{P}(L_1)$(i.e., $\Delta t_n < (\mathcal{P}(L_1))^{-1}$),
	\begin{equation}\label{ginal2}
	\mathbb{E}\|X^{\mu}_{i,n}(t)-X_{i,n}(t)\|_{\infty}^4 \leq e^{CL_1} \cdot \Delta t_n^{4\alpha}
	\end{equation}
	for all $1 \leq i \leq d$ and $0\leq t \leq 1$.
	\paragraph{Proof of statement (II)}
	Next, we extend the result to the case where $2^n \leq \mathcal{P}(L_1)$. By observing \eqref{first big}, we can find polynomial function $\mathcal{P}'(x) > 1$ for $x>1$ so that:
	\begin{equation}
	\abs {({X_{i,n}^{\mu}(t_{k+1}^{n})-X_{i,n}(t_{k+1}^n)})-({X_{i,n}^{\mu}(t_{k}^{n})-X_{i,n}(t_{k}^n)})} \leq \mathcal{P}'(L_1,\|B\|_{\alpha},\|\widetilde{A}\|_{2\alpha})\Delta t_n^{\alpha}.
	\end{equation} 
	Since the number of iterations in the discretization scheme $2^n$ is at most ${\mathcal{P}(L_1)}$, we have
	\begin{equation}
	\|X_{i,n}^{\mu}(\cdot)-X_{i,n}(\cdot)\|_{\infty} \leq {\mathcal{P}(L_1)}\mathcal{P}'(L_1,\|B\|_{\alpha},\|\widetilde{A}\|_{2\alpha})  \Delta t_n^{\alpha},
	\end{equation}
	and consequently, from Lemma~\ref{jose1}, that
	\begin{align}\label{ginal3}
	\mathbb{E}\|X_{i,n}^{\mu}(\cdot)-X_{i,n}(\cdot)\|_{\infty}^4 \leq\mathcal{P}''(L_1)\Delta t_n^{4\alpha}
	\end{align}
	for some polynomial  $\mathcal{P}''(x)> 1$ when $x>1$. 
	
	This concludes the proof of Lemma~\ref{orderfirstlemma}.
\end{proof}

\subsection{Proof of Lemma~\protect\ref{finitefourthmoment}}

We first prove the second claim \eqref{ffm2} of Lemma~\ref{finitefourthmoment}: 
\begin{proof}[Proof of \eqref{ffm2} in \lemref{finitefourthmoment}]
Assume without loss of generality that $x=0$. Fixing ${\mu}\in\mathcal{L}_1$ and $\mu^{(n)}\in\mathcal{L}_1$ with bounding number $L_1>1$, since
\begin{align}
\sup_{1\leq k \leq 2^n}\abs{X_{i,n}(t_{k}^n)}^4\leq & \Big(\sum_{k=0}^{2^n}\|\mu\|_{\infty}\Delta t_n+\sum_{j}\sup_{1\leq h \leq 2^n} |\sum_{k=0}^{h}\sigma_{ij}(X_n(t_{k}^n))\Delta B_{j,k}^n| \nonumber \\
&+\sum_{j,l,m} \sup_{1\leq h \leq 2^n} \abs{\sum_{k=0}^{h}\frac{\partial \sigma_{ij}}{\partial x_l}(X_{n}(t_{k}^n))\sigma_{lm}(X_{n}(t_{k}^n))\widetilde{A}_{mj}(t_{k}^n,t_{k+1}^n)}\Big),
\end{align}
it follows from \eqref{coarse} and Assumption \ref{assump:1} that we can find constant $C>1$ such that
	\begin{align}
	\sup_{1\leq k \leq 2^n}\abs{X_{i,n}(t_{k}^n)}^4\leq &C \cdot \Big(L_1^4+\sum_{j}\big(\sup_{1\leq h \leq 2^n} |\sum_{k=0}^{h}\sigma_{ij}(X_n(t_{k}^n))\Delta B_{j,k}^n|\big)^4 \nonumber \\
	&+\sum_{j,l,m} \big(\sup_{1\leq h \leq 2^n} \abs{\sum_{k=0}^{h}\frac{\partial \sigma_{ij}}{\partial x_l}(X_{n}(t_{k}^n))\sigma_{lm}(X_{n}(t_{k}^n))\widetilde{A}_{mj}(t_{k}^n,t_{k+1}^n)}\big)^4\Big).
	\end{align}
	Now, using the fact that $\mathbb{E}\big(B(t)-B(s)\big)^4=O(t-s)^2$ and $\mathbb{E}\big(\widetilde{A}_{ij}(s,t)\big)^4=O(t-s)^4$, we recall Burkholder-Davis-Gundy inequality \cite{burkholder1972} to further find constant $C'>1$ and $C''>1$ so that
	\begin{align}
		\mathbb{E}\sup_{1\leq k \leq 2^n}\abs{X_{i,n}(t_{k}^n)}^4\leq &C \cdot \Big(L_1^4+\sum_{j}\sup_{1\leq h \leq 2^n} |\sum_{k=0}^{h}\sigma_{ij}(X_n(t_{k}^n))\Delta B_{j,k}^n|^4 \nonumber \\
		&+\sum_{j,l,m} \sup_{1\leq h \leq 2^n} \abs{\sum_{k=0}^{h}\frac{\partial \sigma_{ij}}{\partial x_l}(X_{n}(t_{k}^n))\sigma_{lm}(X_{n}(t_{k}^n))\widetilde{A}_{mj}(t_{k}^n,t_{k+1}^n)}^4\Big)\nonumber\\
	\leq& C^{\prime}\Big(L_1^4+\sum_{j}	\mathbb{E}\big( \sum_{k=0}^{2^n}(\sigma_{ij}(X_n(t_{k}^n)))^2(\Delta B_{j,k}^n)^2\big)^2 \nonumber \\
	&+ \sum_{j,l,m} \mathbb{E}\big(\sum_{k=0}^{2^n} (\frac{\partial \sigma_{ij}}{\partial x_l}(X_{n}(t_{k}^n)))^2\sigma_{lm}^2(X_{n}(t_{k}^n))(\widetilde{A}_{mj}(t_{k}^n,t_{k+1}^n))^2\big)^2\Big) \nonumber \\
	\leq& C^{\prime}\Big(L_1^4+\sum_{j=1}^{d^{\prime}}\mathbb{E} 2^n\sum_{k=0}^{2^n}(\sigma_{ij}(X_n(t_{k}^n)))^4(\Delta B_{j,k}^n)^4\nonumber \\
	&+ \sum_{j,l,m} \mathbb{E}2^n\sum_{k=0}^{2^n} (\frac{\partial \sigma_{ij}}{\partial x_l}(X_{n}(t_{k}^n)))^4\sigma_{lm}^4(X_{n}(t_{k}^n))(\widetilde{A}_{mj}(t_{k}^n,t_{k+1}^n))^4\Big) \nonumber \\
	\leq& C^{\prime\prime}(L_1^4+L^42^n(\sum_{k=0}^{2^n}\Delta t_n^2 +\sum_{k=0}^{2^n}\Delta t_n^4))<\mathcal{P}(L_1)
	\end{align}
	for some polynomial function $\mathcal{P}(x) > 1$ for $x>1$. Now, the claim on $\mathbb{E} |f(X_{n_0}(1))|^4$ follows by invoking the bound on 	$\|\frac{ \partial f}{\partial x_{i}}\|_{\infty} $ in Assumption ~\ref{assump:2}.
\end{proof}
We proceed to the proof of the first claim of Lemma~\ref{finitefourthmoment}, namely %
\eqref{ffm1}. 
\begin{proof}[Proof of \eqref{ffm1} in \lemref{finitefourthmoment}]
It follows from Equation \eqref{elementary inequality} in \cite{giles2014} that we have 
for $p \geq 2$ 
\begin{equation}\label{gileslemma}
\lvert \Delta_n\rvert^p \leq 2^{p-1}L^p\mathbb{E}[\|\frac{1}{2}
(X_{n+1}(1)+X_{n+1}^{a}(1))-X_n(1)\|_{\infty}^p]+2^{-p-1}L^p\mathbb{E}%
[\|X_{n+1}(1)-X_{n+1}^{a}(1)\|_{\infty}^{2p}].
\end{equation}
Thus, according to \eqref{gileslemma}, in order to  prove Lemma~\ref
{finitefourthmoment}, 
\begin{equation}
\mathbb{E}(\Delta_n^4) \leq e^{CL_1} \Delta
t_n^{4-\delta},
\end{equation}
where $\delta>0$ is defined in Definition~\ref{alni}, it is sufficient to provide an upper bound on
\begin{equation}  \label{mmmm1}
\mathbb{E}[\|\frac{1}{2} (X_{n+1}(1)+X_{n+1}^{a}(1))-X_n(1)%
\|_{\infty}^4] 
\end{equation}
and 
\begin{equation}  \label{mmmm2}
\mathbb{E}[\|X_{n+1}(1)-X_{n+1}^{a}(1)\|_{\infty}^8]. 
\end{equation}
 Note
that bound on \eqref{mmmm2} is provided by Lemma~\ref{lcyes} since ${4-\delta} < {8(2\alpha-\beta)}$ as in Definition~\ref{alni} and $\mathcal{P}(L_1)<e^{CL_1}$ for appropriately chosen $C>1$. So we just need to prove %
\eqref{mmmm1}.
First we write the recursion for $X_{n+1}(\cdot)$ over the coarse step $%
\Delta t_n $ instead of $\Delta t_{n+1}$. For $1\leq i \leq d$ and $%
1\leq k \leq 2^{n}$, adding up two steps of recursion for $X_{n+1}(\cdot)$,
we have: 
\begin{align}
X_{i,n+1}(t_{k+1}^n)
=X&_{i,n+1}(t_k^n)+\mu_i^{(n+1)}(X_{n+1}^f(t_k^n))+\sum_{j=1}^{d^{\prime}}
\sigma_{ij}(X_{n+1}(t_{k}^n))\Delta B_{j,k}^n
\notag \\
&+
\sum_{j=1}^{d^{\prime}}\sum_{l=1}^d\sum_{m=1}^{d^{\prime}} \frac{\partial \sigma_{ij}}{\partial x_l%
}(X_{n+1}(t_{k}^n))\sigma_{lm}(X_{n+1}(t_{k}^n))\widetilde{A}%
_{mj}(t_{k}^n,t_{k+1}^n)  \notag \\
&- \sum_{j=1}^{d^{\prime}}\sum_{l=1}^d\sum_{m=1}^{d^{\prime}} \frac{\partial \sigma_{ij}}{\partial
	x_l}(X_{n+1}(t_{k}^n))\sigma_{lm}(X_{n+1}(t_{k}^n))(\Delta
B_{j,2k}^{n+1}\Delta B_{m,2k+1}^{n+1}-\Delta B_{m,2k}^{n+1}\Delta
B_{j,2k+1}^{n+1}) \notag\\
&+N_{i,n,k}^f+M_{i,n,k}^{f,(1)}+ M_{i,n,k}^{f,(2)} + M_{i,n,k}^{f,(3)}
\end{align}
where we define 
\begin{align}
M_{i,n,k}^{f,(2)} \triangleq& \Big(\sum_{j=1}^{d^{\prime}}(%
\sigma_{ij}(X_{n+1}(t^{n+1}_{2k+1}))-\sigma_{ij}(X_{n+1}(t^{n}_{k})))-%
\sum_{j=1}^{d^{\prime}}\sum_{l=1}^d\sum_{m=1}^{d^{\prime}} \big(\frac{\partial \sigma_{ij}}{%
	\partial x_l}\cdot\sigma_{lm}\big)(X_{n+1}(t_{k}^n))\Delta B_{m,2k}^{n+1}%
\Big)\Delta B_{j,2k+1}^{n+1}\nonumber\\
M_{i,n,k}^{f,(3)}\triangleq& \sum_{j=1}^{d^{\prime}}\sum_{l=1}^d\sum_{m=1}^{d^{\prime}} \Big(\big(%
\frac{\partial \sigma_{ij}}{\partial x_l}\cdot\sigma_{lm}\big)%
(X_{n+1}(t^{n+1}_{2k+1}))-\big(\frac{\partial \sigma_{ij}}{\partial x_l}%
\cdot\sigma_{lm}\big)(X_{n+1}(t_{k}^n))\Big)\widetilde{A}%
_{mj}(t_{2k+1}^{n+1},t_{k+1}^n)\nonumber\\
M_{i,n,k}^{f,(1)} \triangleq& \big(\sum_{j=1}^d \frac{\partial \mu_i^{(n+1)}}{%
	\partial x_j}(X_{n+1}(t_{k}^n))\sum_{m=1}^{d^{\prime}}
\sigma_{jm}(X_{n+1}(t_{k}^n))\Delta B_{m,2k}^{n+1}\big) \frac{\Delta t_n}{2}
\end{align}
and 
\begin{align}  \label{Nf}
N_{i,n,k}^f \triangleq& \big({\mu}^{(n+1)}_i(X_{n+1}(t^{n+1}_{2k+1}))-{\mu}%
^{(n+1)}_i(X_{n+1}(t^{n}_{k}))\big)\frac{\Delta t_n}{2}-M_{i,n,k}^{f,(1)} 
\notag \\
=&\Big(\sum_{j=1}^d \frac{\partial \mu_i^{(n+1)}}{\partial x_j}%
(X_{n+1}(t_{k}^n))(X_{j,n+1}(t_{2k+1}^{n+1})-X_{j,n+1}(t_{k}^{n}))  \notag \\
&+\frac{1}{2} \sum_{j=1}^d \sum_{m=1}^d \frac{\partial ^2 \mu_i^{(n+1)}}{%
	\partial x_j \partial x_m}%
(\eta)(X_{j,n+1}(t_{2k+1}^{n+1})-X_{j,n+1}(t_{k}^{n}))
(X_{m,n+1}(t_{2k+1}^{n+1})-X_{m,n+1}(t_{k}^{n}))\Big)\frac{\Delta t_n}{2}%
-M_{i,n,k}^{f,(1)}  \notag \\
=&\bigg(\sum_{j=1}^d \frac{\partial \mu_i^{(n+1)}}{\partial x_j}%
(X_{n+1}(t_{k}^n))\Big( {\mu}_j^{(n+1)}(X_{n+1}(t_k^n))\frac{\Delta t_n}{2}%
+\sum_{m,l,\widetilde{m}} \big(\frac{\partial \sigma_{jm}}{\partial x_l}%
\cdot\sigma_{l\widetilde{m}}\big)(X_{n+1}(t_{k}^n))\widetilde{A}_{m%
	\widetilde{m}}(t_k^n,t_{2k+1}^{n+1})\Big)  \notag \\
+&\frac{1}{2} \sum_{j,m=1}^d \frac{\partial ^2 \mu_i^{(n+1)}}{\partial x_j
	\partial x_m}(\rho)(X_{j,n+1}(t_{2k+1}^{n+1})-X_{j,n+1}(t_{k}^{n}))
(X_{m,n+1}(t_{2k+1}^{n+1})-X_{m,n+1}(t_{k}^{n}))\bigg)\frac{\Delta t_n}{2}
\end{align}
for some $\rho$ that lies between $X_{n+1}(t_k^n)$ and $X_{n+1}(t_{2k+1}^{n+1})$.

Furthermore, we similarly define $N_{i,n,k}^a, M_{i,n,k}^{a,(\cdot)}$ associated with $%
X_{n+1}^a(\cdot) $, $ B^{n+1,a}(t)$ and $\widetilde{A}%
^a(t^{n+1}_{k+1},t^{n+1}_k)$ so we can write the recursion over the coarse
step $\Delta t_n$ for $X_{n+1}^a(\cdot)$ by using \eqref{trihere} in Definition~\ref{adefn}: 
\begin{align}
X_{i,n+1}^a(t_{k+1}^n)  
=&X_{i,n+1}^a(t_k^n)+\mu_i^{(n+1)}(X_{n+1}^a(t_k^n))+\sum_{j=1}^{d^{\prime}}
\sigma_{ij}(X_{n+1}^a(t_{k}^n))\Delta
B_{j,k}^n \notag \\
&+
\sum_{j=1}^{d^{\prime}}\sum_{l=1}^d\sum_{m=1}^{d^{\prime}} \frac{\partial \sigma_{ij}}{\partial x_l%
}(X^a_{n+1}(t_{k}^n))\sigma_{lm}(X^a_{n+1}(t_{k}^n))\widetilde{A}^a%
_{mj}(t_{k}^n,t_{k+1}^n)  \notag \\
&- \sum_{j=1}^{d^{\prime}}\sum_{l=1}^d\sum_{m=1}^{d^{\prime}} \frac{\partial \sigma_{ij}}{\partial
	x_l}(X^a_{n+1}(t_{k}^n))\sigma_{lm}(X^a_{n+1}(t_{k}^n))(\Delta
B_{j,2k}^{n+1,a}\Delta B_{m,2k+1}^{n+1,a}-\Delta B_{m,2k}^{n+1,a}\Delta
B_{j,2k+1}^{n+1,a}) \notag\\
&+N_{i,n,k}^a+M_{i,n,k}^{a,(1)}+ M_{i,n,k}^{a,(2)} +
M_{i,n,k}^{a,(3)}.
\end{align}
Now, combining these results, we can write the recursion for $\bar{X}_{n+1}(\cdot)\triangleq \frac{1}{2}
\big(X_{n+1}(\cdot) + X_{n+1}^a(\cdot)\big)$ over the coarse step $\Delta
t_n $: 
\begin{align}  \label{barbar}
\bar{X}_{i,n+1}(t_{k+1}^n) =&\bar{X}_{i,n+1}(t_k^n) + \mu_i^{(n+1)}(\bar{X}%
_{n+1}(t_k^n))\Delta t_n + \sum_{j=1}^{d^{\prime}} \sigma_{ij}(\bar{X}%
_{n+1}(t_k^n))\Delta B_{j,k}^n  \notag \\
&+\sum_{j=1}^{d^{\prime}}\sum_{l=1}^d\sum_{m=1}^{d^{\prime}} \frac{\partial \sigma_{ij}}{\partial
	x_l}(\bar{X}_{n+1}(t_k^n))\sigma_{lm}(\bar{X}_{n+1}(t_k^n)) \widetilde{A}%
_{mj}(t_k^n,t_{k+1}^n)+R_{i,n,k},
\end{align}
where we define 
\begin{align} \label{REINK1}
R_{i,n,k} \triangleq&
N^{(1)}_{i,n,k}+M^{(1)}_{i,n,k}+M^{(2)}_{i,n,k}+M^{(3)}_{i,n,k}  \notag \\
+&\frac{1}{2}(N_{i,n,k}^f+M_{i,n,k}^{f,(1)}+ M_{i,n,k}^{f,(2)} +
M_{i,n,k}^{f,(3)}+N_{i,n,k}^a+M_{i,n,k}^{a,(1)}+ M_{i,n,k}^{a,(2)} +
M_{i,n,k}^{a,(3)}),
\end{align}
where we define
\begin{align}  \label{REINK}
N_{i,n,k}^{(1)} \triangleq& \frac{1}{2}\Big(\mu_i^{(n+1)}(X_{n+1}(t_k^n))+%
\mu_i^{(n+1)}(X_{n+1}^a(t_k^n))\Big)-\mu_i^{(n+1)}(\bar{X}_{n+1}(t_k^n)), 
\notag \\
M_{i,n,k}^{(1)}\triangleq& \sum_{j=1}^{d^{\prime}}\Big(\frac{1}{2}\big(%
\sigma_{ij}(X_{n+1}(t_k^n))+\sigma_{ij}(X_{n+1}^a(t_k^n))\big)-\sigma_{ij}(%
\bar{X}_{n+1}(t_k^n))\Big)\Delta B_{j,k}^n,  \notag \\
M_{i,n,k}^{(2)}\triangleq& \sum_{j,m=1}^{d^{\prime}}\sum_{l=1}^d \Big(\frac{1}{2}\big((%
\frac{\partial \sigma_{ij}}{\partial x_l}\cdot\sigma_{lm})(X_{n+1}(t_k^n))+(%
\frac{\partial \sigma_{ij}}{\partial x_l}\cdot\sigma_{lm})(X_{n+1}^a(t_k^n))%
\big)  \notag \\
&-\big(\frac{\partial \sigma_{ij}}{\partial x_l}\cdot\sigma_{lm}\big)(\bar{X}%
_{n+1}(t_k^n))\Big)\widetilde{A}_{mj}(t_k^n,t_{k+1}^n) ,  \notag \\
M_{i,n,k}^{(3)}\triangleq& \sum_{j,m=1}^{d^{\prime}}\sum_{l=1}^d \frac{1}{2}\big((\frac{%
	\partial \sigma_{ij}}{\partial x_l}\cdot\sigma_{lm})(X_{n+1}(t_k^n))-(\frac{%
	\partial \sigma_{ij}}{\partial x_l}\cdot\sigma_{lm})(X_{n+1}^a(t_k^n))\big) 
\notag \\
&\cdot(\Delta B_{j,2k}^{n+1}\Delta B_{m,2k+1}^{n+1}-\Delta
B_{m,2k}^{n+1}\Delta B_{j,2k+1}^{n+1}).
\end{align}

Finally, subtract the recursion in \eqref{coarse} for $X_n(\cdot)$ from $\bar{X}_n(\cdot)$ to obtain
\begin{align}  \label{barc}
&\bar{X}_{i,n+1} (t_{k+1}^n)-X_{i,n}(t_{k+1}^n)\nonumber\\ =&\bar{X}_{i,n+1}
(t_{k}^n)-X_{i,n}(t_{k}^n)+ (\mu^{(n)}_i(\bar{X}_{n+1}
(t_{k}^n))-\mu^{(n)}_i(X_{n}(t_{k}^n)))\Delta t_n  \notag \\
&+({\mu}^{(n+1)}_i-{\mu}^{(n)}_i)(\bar{X}_{i,n+1} (t_{k}^n))\Delta
t_n+ \sum_{j=1}^{d^{\prime}} \big(\sigma_{ij}(\bar{X}_{i,n+1}
(t_{k}^n))-\sigma_{ij}(X_{i,n}(t_{k}^n))\big)\Delta B_{j,k}^n  \notag \\
&+\sum_{j=1}^{d^{\prime}}\sum_{l=1}^d\sum_{m=1}^{d^{\prime}} \big((\frac{\partial \sigma_{ij}}{%
	\partial x_l}\cdot\sigma_{lm})(\bar{X}_{i,n+1} (t_{k}^n))-(\frac{\partial
	\sigma_{ij}}{\partial x_l}\cdot\sigma_{lm})(X_{i,n}(t_{k}^n))\big)\widetilde{%
	A}_{mj}(t_{k}^n,t_{k+1}^n) +R_{i,n,k}
\end{align}
We are now ready to prove \eqref{mmmm1} by bounding  $%
\mathbb{E}\lvert \bar{X}_{i,n+1}
(t_{k}^n)-X_{i,n}(t_{k}^n)\rvert^4$. Similarly as in the proof of Lemma~\ref{orderfirstlemma}, we simplify the notation by defining 
\begin{equation}
\xi_{i,n,k} \triangleq \bar{X}_{i,n+1} (t_{k}^n)-X_{i,n}(t_{k}^n) \qquad 
\text{and} \qquad \xi_{n,k} \triangleq \bar{X}_{n+1} (t_{k}^n)-X_{n}(t_{k}^n)
\end{equation}
with 
\begin{align}  \label{etaagain}
\eta_{i,n,k} \triangleq & (\mu^{(n)}_i(\bar{X}_{n+1}
(t_{k}^n))-\mu^{(n)}_i(X_{n}(t_{k}^n)))\Delta t_n + \sum_{j=1}^{d^{\prime}} \big(%
\sigma_{ij}(\bar{X}_{i,n+1} (t_{k}^n))-\sigma_{ij}(X_{i,n}(t_{k}^n))\big)%
\Delta B_{j,k}^n  \notag \\
&+\sum_{j=1}^{d^{\prime}}\sum_{l=1}^d\sum_{m=1}^{d^{\prime}} \big((\frac{\partial \sigma_{ij}}{%
	\partial x_l}\cdot\sigma_{lm})(\bar{X}_{i,n+1} (t_{k}^n))-(\frac{\partial
	\sigma_{ij}}{\partial x_l}\cdot\sigma_{lm})(X_{i,n}(t_{k}^n))\big)\widetilde{%
	A}_{mj}(t_{k}^n,t_{k+1}^n)  \notag \\
&+R_{i,n,k}+({\mu}^{(n+1)}_i-{\mu}^{(n)}_i)(\bar{X}_{i,n+1} (t_{k}^n))\Delta
t_n,
\end{align}
so that we have
\begin{equation}
\xi_{i,n,k+1}=\xi_{i,n,k}+\eta_{i,n,k}
\end{equation}
for $0 \leq k \leq 2^n-1$. Fixing $V_1,V_2,...$ such that   $\mu\in\mathcal{L}_1$ and $\{{\mu}^{(n)}\}_{n\geq 0}\subset\mathcal{L}_1$, we want to find constant $C>1$ and polynomial $%
\mathcal{P}(x) > 1$ for $x>1$ such that if $n$ is large enough that $2^{n} > 
\mathcal{P}(L_1)$, then
\begin{equation}  \label{inductionhypothesis}
\mathbb{E}(\xi_{i,n,k})^4 \leq e^{CL_1\cdot
	t_k^n}\Delta t_n^{4-\delta}
\end{equation}
 for all $1 \leq i \leq d$ and $0 \leq k \leq
 2^n$. Similarly, we prove by induction on $0 \leq k \leq 2^n$. We first need to analyze all
the terms of $R_{i,n,k}$ in \eqref{REINK1}. 

We start by bounding $N^{(1)}_{i,n,k}$ in \eqref{REINK}
using Taylor expansion
\begin{align}
N_{i,n,k}^{(1)} \triangleq& \frac{1}{2}\Big(\mu_i^{(n+1)}(X_{n+1}(t_k^n))-%
\mu_i^{(n+1)}(X_{n+1}^a(t_k^n))\Big)-\mu_i^{(n+1)}(\bar{X}_{n+1}(t_k^n)) 
\notag \\
=&\frac{1}{16} \sum_{j=1}^d\sum_{m=1}^d \Big(\frac{\partial ^2 \mu_i}{%
	\partial x_j \partial x_m}(\rho_1)+\frac{\partial ^2 \mu_i}{\partial x_j
	\partial x_m}(\rho_1^{^{\prime }})\Big)%
(X_{j,n+1}(t_k^n)-X^a_{j,n+1}(t_k^n))(X_{m,n+1}(t_k^n)-X^a_{m,n+1}(t_k^n))%
\Delta t_n
\end{align}
where $\rho_1$ and $\rho_1^{\prime}$ lie somewhere between $X^a_{n+1}(t^n_k)$
and $X_{n+1}(t^n_k)$. Now we use Lemma~\ref{note lemma 5} on $%
(X_{j,n+1}(t_k^n)-X^a_{j,n+1}(t_k^n))(X_{m,n+1}(t_k^n)-X^a_{m,n+1}(t_k^n))$
and  H\"older's inequality to obtain
\begin{align}  \label{finalfinal1}
\mathbb{E}(N_{i,n,k}^{(1)})^4 < \mathcal{P}(L_1)\Delta t_n^{8(2\alpha-\beta)+4},
	\end{align}
	for some fixed polynomial $\mathcal{P}(x)>1$ for $x>1$. 
	
	Now, for $N_{i,n,k}^f$ in \eqref{Nf}, we also use Taylor expansion to obtain
	\begin{align}
	N^f_{i,n,k} =&\bigg(\sum_{j=1}^d \frac{\partial \mu_i^{(n+1)}}{\partial x_j}%
	(X_{n+1}(t_{k}^n))\Big( {\mu}_j^{(n+1)}(X_{n+1}(t_k^n))\frac{\Delta t_n}{2} 
	\notag \\
	&+\sum_{m=1}^{d^{\prime}}\sum_{l=1}^d\sum_{\widetilde{m}=1}^{d^{\prime}} \big(\frac{\partial
		\sigma_{jm}}{\partial x_l}\cdot\sigma_{l\widetilde{m}}\big)(X_{n+1}(t_{k}^n))%
	\widetilde{A}_{m\widetilde{m}}(t_k^n,t_{2k+1}^{n+1})\Big)  \notag \\
	&+\frac{1}{2} \sum_{j=1}^d \sum_{m=1}^d \frac{\partial ^2 \mu_i^{(n+1)}}{%
		\partial x_j \partial x_m}%
	(\rho)(X_{j,n+1}(t_{2k+1}^{n+1})-X_{j,n+1}(t_{k}^{n}))
	(X_{m,n+1}(t_{2k+1}^{n+1})-X_{m,n+1}(t_{k}^{n}))\bigg)\frac{\Delta t_n}{2},
	\end{align}
	by using Lemma~\ref{note lemma3} on $%
	(X_{j,n+1}(t_{2k+1}^{n+1})-X_{j,n+1}(t_{k}^{n}))(X_{m,n+1}(t_{2k+1}^{n+1})-X_{m,n+1}(t_{k}^{n})) 
	$ and Lemma~\ref{jose1}. Thus, we can also find some fixed polynomial $\mathcal{P}%
	(x) > 1$ for $x>1$ such that 
	\begin{align}  \label{finalfinal2}
	\mathbb{E}(N^f_{i,n,k})^4 < \mathcal{P}(L_1)\Delta t_n^{8\alpha+4}.
	\end{align}
	For other terms of $R_{i,n,k}$ in \eqref{REINK}, we similarly write out their Taylor
	expansion as follows: 
	\begin{align}
	M_{i,n,k}^{(1)} \triangleq& \sum_{j=1}^{d^{\prime}}\Big(\frac{1}{2}%
	(\sigma_{ij}(X_{n+1}(t_k^n))+\sigma_{ij}(X_{n+1}^a(t_k^n)))-\sigma_{ij}(\bar{%
		X}_{n+1}(t_k^n))\Big)\Delta B_{j,k}^n  \notag \\
	 =&\frac{1}{16}\sum_{j=1}^{d^{\prime}}\sum_{m,l=1}^d \Big(\frac{\partial ^2 \sigma_{ij}%
	}{\partial x_m \partial x_l}(\rho_2)+\frac{\partial ^2 \sigma_{ij}}{\partial
	x_m \partial x_l}(\rho_2^{^{\prime }})\Big)%
(X_{m,n+1}(t_k^n)-X^a_{m,n+1}(t_k^n))  \notag \\
&\cdot(X_{l,n+1}(t_k^n)-X^a_{l,n+1}(t_k^n))\Delta B_{j,k}^n, \nonumber\\
\end{align}
and also
\begin{align}
M_{i,n,k}^{(2)}\triangleq& \sum_{j,m=1}^{d^{\prime}}\sum_{l=1}^d \Big(\frac{1}{2}\big((%
\frac{\partial \sigma_{ij}}{\partial x_l}\cdot\sigma_{lm})(X_{n+1}(t_k^n))+(%
\frac{\partial \sigma_{ij}}{\partial x_l}\cdot\sigma_{lm})(X_{n+1}^a(t_k^n))%
\big)-(\frac{\partial \sigma_{ij}}{\partial x_l}\cdot\sigma_{lm})(\bar{X}%
_{n+1}(t_k^n))\Big)  \notag \\
& \cdot\widetilde{A}_{mj}(t_k^n,t_{k+1}^n)  \notag \\
=&\frac{1}{4}\sum_{j,m=1}^{d^{\prime}}\sum_{l,l^{^{\prime }}=1}^d\Big((\frac{\partial^2
	\sigma_{ij}}{\partial x_l \partial x_{l^{^{\prime }}}}\sigma_{lm}+\frac{%
	\partial \sigma_{ij}}{\partial x_l}\frac{\partial \sigma_{lm}}{\partial
	x_{l^{^{\prime }}}})(\rho_3)-(\frac{\partial^2 \sigma_{ij}}{\partial x_l
	\partial x_{l^{^{\prime }}}}\sigma_{lm}+\frac{\partial \sigma_{ij}}{\partial
	x_l}\frac{\partial \sigma_{lm}}{\partial x_{l^{^{\prime }}}})(\rho_4)\Big) 
\notag \\
&\cdot (X_{l^{^{\prime }},n+1}(t_k^n)-X_{l^{^{\prime
		}},n+1}^a(t_k^n))\widetilde{A}_{mj}(t_k^n,t_{k+1}^n) \nonumber\\
M_{i,n,k}^{(3)}\triangleq& \sum_{j,m,l} \frac{1}{4}\big((\frac{\partial
		\sigma_{ij}}{\partial x_l}\cdot\sigma_{lm})(X_{n+1}(t_k^n))-(\frac{\partial
		\sigma_{ij}}{\partial x_l}\cdot\sigma_{lm})(X_{n+1}^a(t_k^n))\big)(\Delta
	B_{j,2k}^{n+1}\Delta B_{m,2k+1}^{n+1}-\Delta B_{m,2k}^{n+1}\Delta
	B_{j,2k+1}^{n+1})  \notag \\
	=&\frac{1}{8}\sum_{j,m=1}^{d^{\prime}}\sum_{l,l^{^{\prime }}=1}^d \Big((\frac{%
		\partial^2 \sigma_{ij}}{\partial x_l \partial x_{l^{^{\prime }}}}\sigma_{lm}+%
	\frac{\partial \sigma_{ij}}{\partial x_l}\frac{\partial \sigma_{lm}}{%
		\partial x_{l^{^{\prime }}}})(\rho_5)+(\frac{\partial^2 \sigma_{ij}}{%
		\partial x_l \partial x_{l^{^{\prime }}}}\sigma_{lm}+\frac{\partial
		\sigma_{ij}}{\partial x_l}\frac{\partial \sigma_{lm}}{\partial
		x_{l^{^{\prime }}}})(\rho_6)\Big)  \notag \\
	&\cdot (X_{l^{^{\prime }},n+1}(t_k^n)-X_{l^{^{\prime
			}},n+1}^a(t_k^n))(\Delta B_{j,2k}^{n+1}\Delta B_{m,2k+1}^{n+1}-\Delta
		B_{m,2k}^{n+1}\Delta B_{j,2k+1}^{n+1})
		\end{align}
		where all the $\rho_i$ and $\rho^{\prime}_i$ lie somewhere between $%
		X_{n+1}(t_k^n)$ and $X_{n+1}^a(t_k^n)$. For the sake of completeness, the other terms in \eqref{REINK} 
		from $M^{f,(1)}_{i,n,k}$ can be written as
		\begin{align}
		M^{f,(1)}_{i,n,k} \triangleq& \Big(\sum_{j=1}^d \frac{\partial \mu_i^{(n+1)}%
		}{\partial x_j}(X_{n+1}(t_{k}^n))\sum_{m=1}^{d^{\prime}}
		\sigma_{jm}(X_{n+1}(t_{k}^n))\Delta B_{m,2k}^{n+1} \frac{\Delta t_n}{2} \Big)
		\notag \\
		M^{f,(2)}_{i,n,k}\triangleq& \Big(\sum_{j=1}^{d^{\prime}}(%
		\sigma_{ij}(X_{n+1}(t^{n+1}_{2k+1}))-\sigma_{ij}(X_{n+1}(t^{n}_{k})))-%
		\sum_{j,m=1}^{d^{\prime}}\sum_{l=1}^d \big(\frac{\partial \sigma_{ij}}{\partial x_l}%
		\cdot\sigma_{lm}\big)(X_{n+1}(t_{k}^n))\Delta B_{m,2k}^{n+1}\Big)\Delta
		B_{j,2k+1}^{n+1}  \notag \\
		=&\sum_{j=1}^{d^{\prime}} \sum_{m=1}^d \frac{\partial \sigma_{ij}}{\partial x_m}%
		(X_{n+1}(t_k^n))\Big({\mu}_m^{(n+1)}(X_{n+1}(t_k^n))\frac{\Delta t_n}{2} 
		\notag \\
		&+\sum_{j^{^{\prime }},r=1}^{d^{\prime}}\sum_{l=1}^d\frac{\partial \sigma_{mj^{^{\prime
					}}}}{\partial x_{l^{^{\prime }}}}(X_{n+1}(t_{k}^n))%
					\sigma_{lr}(X_{n+1}(t_{k}^n))\widetilde{A}_{rj^{^{\prime
							}}}(t_{k}^n,t_{2k+1}^{n+1})\Big)\Delta B_{j,2k+1}^{n+1}  \notag \\
							&+\frac{1}{2} \sum_{j=1}^{d^{\prime}}\sum_{m,l=1}^d \frac{\partial ^2 \sigma_{ij}}{%
								\partial x_m \partial x_l}%
							(\rho_7)(X_{m,n+1}(t_{2k+1}^{n+1})-X_{m,n+1}(t_{k}^{n}))((X_{l,n+1}(t_{2k+1}^{n+1})-X_{l,n+1}(t_{k}^{n})))\Delta B_{j,2k+1}^{n+1}
							\notag \\
							M^{f,(3)}_{i,n,k}\triangleq& \sum_{j,m=1}^{d^{\prime}}\sum_{l=1}^d \Big((\frac{\partial
								\sigma_{ij}}{\partial x_l}\cdot\sigma_{lm})(X_{n+1}(t^{n+1}_{2k+1}))-(\frac{%
								\partial \sigma_{ij}}{\partial x_l}\cdot\sigma_{lm}(X_{n+1}(t_{k}^n)))\Big)%
							\widetilde{A}_{mj}(t_{2k+1}^{n+1},t_{k+1}^n)  \notag \\
							=&\sum_{j,m=1}^{d^{\prime}}\sum_{l,l^{\prime}=1}^d\Big(\frac{\partial ^2 \sigma_{ij}}{%
								\partial x_l \partial x_{l^{^{\prime }}}}\sigma_{lm}+\frac{\partial
								\sigma_{ij}}{\partial x_l}\frac{\partial \sigma_{lm}}{\partial
								x_{l^{^{\prime }}}}\Big)(\rho_8)(X_{l^{^{\prime
									}},n+1}(t_{2k+1}^{n+1})-X_{l^{^{\prime }},n+1}(t_{k}^{n}))\widetilde{A}%
								_{mj}(t_{2k+1}^{n+1},t_{k+1}^n)
								\end{align}
								where all the $\rho_i$ lie somewhere between $X_{n+1}(t^{n+1}_{2k+1})$ and $%
								X_{n+1}(t^{n}_{k}))$. Based on the expansion above, we use the fact that $%
								\mathbb{E}\big(B(t)-B(s)\big)^4=O(t-s)^2$ and $\mathbb{E}\big(\widetilde{A}%
								_{ij}(s,t)\big)^4=O((t-s)^4)$, independence of $X_{n+1}(t_k^n)$ (and $%
								X_{n+1}^a(t_k^n)$) with $\Delta B^n_k$, Lemma~\ref{note lemma3} and Lemma~%
								\ref{note lemma 5} to perform similar analysis on these terms like we did
								for $N^f_{i,n,k}$ and $N^{(1)}_{i,n,k}$. For convenience, we omit the
								details and conclude that we can find polynomial $\mathcal{P}(x) > 1$ for $x>1$ such
								that 
								\begin{align}  \label{remainorder}
								\mathbb{E} R^4_{i,n,k} \leq \mathcal{P}(L_1)\Delta
								t_n^{8(2\alpha-\beta)+2}.
								\end{align}
								Now we are ready to prove the hypothesis in \eqref{inductionhypothesis} by
								induction on $0 \leq k \leq 2^n$. First of all, when $k=0$, for $1 \leq i
								\leq d$, the claim holds since $\xi_{i,n,0}\triangleq
								X_{i,n}^{\mu}(0)-X_{i,n}(0) =x-x=0$. 
								
								Now, fixing $0 \leq k \leq 2^n-1$  and $1 \leq i \leq d$, 
								suppose the induction hypothesis holds so that we can find $C>1$ where 
								\begin{equation}  \label{inductionassumption2}
								\mathbb{E}\lvert \xi^4_{i,n,j}\rvert \leq
								e^{CL_1\cdot t_j^n}\cdot \Delta t_n^{4-\delta}
								\end{equation}
								for all $0 \leq j \leq k$. We want to show 
								\begin{equation}
								\mathbb{E}\lvert \xi^4_{i,n,k+1}\rvert \leq
								e^{CL_1\cdot t_{k+1}^n}\cdot \Delta t_n^{4-\delta}
								\end{equation}
								for all $1\leq i \leq d$. To achieve this, we again use \eqref{first big 2}
								\begin{align}  \label{first big 3}
								&\mathbb{E}(\xi_{i,n,k+1} ^4) \nonumber\\
								= & \mathbb{E}(\xi_{i,n,k} ^4)+\mathbb{E}(\eta_{i,n,k} ^4)+3\mathbb{E}_{\mu(\omega)}(\xi_{i,n,k}
								^3\eta_{i,n,k} )+3\mathbb{E}(\xi_{i,n,k} \eta_{i,n,k} ^3)+6%
								\mathbb{E}(\xi_{i,n,k} ^2\eta_{i,n,k} ^2)
								\end{align}
								to provide upper bounds for terms in \eqref{first big 3}.
								
								 We start with $%
								\eta^4_{i,n,k}$ by observing \eqref{etaagain} and using \eqref{remainingfield}
								to find constant $C>1$ that: 
								\begin{align}
								(\eta_{i,n,k})^4\leq& C (L_1^4\|\xi_{n,k}\|_{\infty}^4\Delta
								t_n^4+L^4\|\xi_{n,k}\|_{\infty}^4\Delta
								(B_{j,k}^n)^4+L^4\|\xi_{n,k}\|_{\infty}^4(\widetilde{A}%
								_{mj}(t_{k}^n,t_{k+1}^n))^4  \notag \\
								&+\lvert R_{i,n,k}\rvert^4+L_1^4\Delta t_n^{4(3+\frac{q-4}{2}%
									)\gamma+4})
								\end{align}
								where we can use the fact that $\mathbb{E}\big(B(t)-B(s)\big)^4=O(t-s)^2$
								and $\mathbb{E}\big(\widetilde{A}_{ij}(s,t)\big)^4=O((t-s)^4)$ and %
								\eqref{remainorder} to conclude: 
								\begin{align}  \label{etabound2}
								\mathbb{E}\eta_{i,n,k}^4 \leq& C (L_1^4\mathbb{E}%
								\|\xi_{n,k}\|_{\infty}^4\Delta t_n^4+L^4\mathbb{E}%
								\|\xi_{n,k}\|_{\infty}^4\Delta t_n^2+L^4\mathbb{E}%
								\|\xi_{n,k}\|_{\infty}^4\Delta t_n^4  \notag \\
								&+\mathcal{P}(L_1)\Delta t_n^{8(2\alpha-\beta)+2}
								+L_1^4(\omega)\Delta t_n^{4(3+\frac{q-4}{2})\gamma+4})  \notag \\
								\leq& e^{C_1L_1(\omega)t_k^n}\Delta t_n^{5-\delta} (\mathcal{P}%
								(L_1)\Delta
								t_n^{1+8(2\alpha-\beta)-(4-\delta)}+2L_1^4\Delta t_n^3 +2L^4\Delta
								t_n),
								\end{align}
								where the last line follows from the induction hypothesis, the fact that $(3+\frac{%
									q-4}{2})\gamma > 1$ , $4-\delta < 8(2\alpha-\beta)$ in Definition~\ref{alni} and the results in \eqref{remainorder}.
								
								For the bound on $\mathbb{E}(\xi_{i,n,k}^3\eta_{i,n,k})$, because
								of the independence of Brownian increments with $\mu(\cdot)\in\mathcal{L}_1$, we can simplfy
								the Equation \eqref{etaagain} using the martingale property and write 
								\begin{align}
								&\mathbb{E}(\xi_{i,n,k}^3\eta_{i,n,k})  \notag \\
								=&\mathbb{E}\bigg[(\xi_{i,n,k})^3\big((\mu^{(n)}_i(\bar{X}_{n+1}
								(t_{k}^n))-\mu^{(n)}_i(X_{n}(t_{k}^n)))\Delta
								t_n+N_{i,n,k}^{(1)}+N_{i,n,k}^{f}+({\mu}^{(n+1)}_i-{\mu}^{(n)}_i)(\bar{X}%
								_{i,n+1}(t_{k}^n))\Delta t_n)\big) \bigg] \notag \\
								\leq& L_1\bigg(\mathbb{E}\|\xi_{n,k}\|_{\infty}^4\Delta t_n
								+ (\mathbb{E} \|\xi_{n,k}\|_{\infty}^4)^{\frac{3}{4}}(\mathbb{E%
								}(N_{i,n,k}^{(1)})^4)^{\frac{1}{4}}+(\mathbb{E}
								\|\xi_{n,k}\|_{\infty}^4)^{\frac{3}{4}}(\mathbb{E}(N_{i,n,k}^{f})^4)^{\frac{1}{4}}  +(\mathbb{E}\|\xi_{n,k}\|_{\infty}^4)^{\frac{3}{4}}
								\Delta t_n^{(3+\frac{q-4}{2})\gamma+1}  \bigg)\notag \\
								\leq&e^{CL_1t_k^n}\Delta
								t_n^{5-\delta}(2L_1+2L_1\Delta t_n^{\frac{\delta}{4}%
									+2(2\alpha-\beta)-1})\notag
								\end{align}
								where the second inequality follows from H\"older's inequality and Equation \eqref{remainingfield}. The
								last inequality follows from the induction hypothesis, Equations \eqref{finalfinal1}, 
								\eqref{finalfinal2} and the fact that $(3+\frac{q-4}{2})\gamma > 1$ and $%
								8(2\alpha-\beta)>4-\delta$ as in Definition~\ref{alni}.
								
								Similarly, we have
								\begin{align}
								\mathbb{E}\xi_{i,n,k}(\eta_{i,n,k})^3 \leq& (\mathbb{E}%
								(\xi_{i,n,k} )^4)^{\frac{1}{4}} (\mathbb{E}(%
								\eta_{i,n,k} )^4)^{\frac{3}{4}}  \notag \\
								\leq& e^{CL_1t_k^n}\Delta t_n^{5-\delta} (\mathcal{P}%
								(L_1)+2L_1^4+2L^4)\Delta t_n^{\frac{1}{2}}  \notag \\
								\mathbb{E}(\xi_{i,n,k})^2(\eta_{i,n,k})^2 \leq &(\mathbb{E}%
								(\xi_{i,n,k} )^4)^{\frac{1}{2}}(\mathbb{E}(%
								\eta_{i,n,k})^4)^{\frac{1}{2}}  \notag \\
								\leq& e^{CL_1t_k^n}\Delta t_n^{5-\delta} \big(\mathcal{P}%
								(L_1)\Delta
								t_n^{8(2\alpha-\beta)-(4-\delta)}+2L_1^4\Delta t_n^2 +2L^4\big)^{%
									\frac{1}{2}}
								\end{align}
								following from H\"older's inequality and Equation \eqref{etabound2}.

								Let $C=5+2L>1$ and find
								polynomial $\mathcal{P}'(x) > 1$ for $x>1$ such that when $2^n>
									\mathcal{P}'(L_1)$, we have 
								\begin{align}  \label{whatwehave}
								(2L_1+2L_1\Delta t_n^{\frac{\delta}{4}+2(2\alpha-\beta)-1})
								\leq &3L_1  \notag \\
								(\mathcal{P}(L_1)\Delta
								t_n^{1+8(2\alpha-\beta)-(4-\delta)}+2L_1^4\Delta t_n^3 +2L^4\Delta
								t_n) \leq &1  \notag \\
								(\mathcal{P}(L_1)+2L_1^4+2L^4)\Delta t_n^{\frac{1}{2}}
								\leq& 1  \notag \\
								\big(\mathcal{P}(L_1)\Delta
								t_n^{8(2\alpha-\beta)-(4-\delta)}+2L_1^4\Delta t_n^2 +2L^4\big)^{%
									\frac{1}{2}} \leq &2L^2
								\end{align}
								Now we are ready to prove the induction hypothesis. In particular, when $2^n>
								\mathcal{P}'(L_1)$, we use the bound in Equations \eqref{first big 3}
								and \eqref{whatwehave} to obtain
								\begin{align}  \label{final1}
								\mathbb{E}(\xi_{i,n,k+1})^4 \leq &
								e^{CL_1t_k^n}\Delta t_n^{4-\delta}+ e^{CL_1t_k^n}\Delta
								t_n^{5-\delta}(3L_1+2+2L^2)  \notag \\
								\leq & e^{CL_1t_k^n}\Delta
								t_n^{4-\delta}\cdot(1+CL_1\Delta t_n)  \notag \\
								\leq & e^{CL_1t_{k+1}^n}\Delta t_n^{4-\delta}
								\end{align}
								where the last line follows from convexity of exponential function  $e^y
								\geq e^x + e^x\cdot(y-x)$ for $y\geq x$. Now we can use the same method as in the proof of
								Lemma~\ref{orderfirstlemma} to extend the induction hypothesis to the case
								where $\Delta t_n \leq \mathcal{P}'(L_1)$ and finish the proof of \eqref{mmmm1} and thus  the proof of Lemma~\ref{finitefourthmoment}.

\end{proof}

\subsection{Proof of Lemma~\ref{lemma L1}}\label{lookhere}
We first present a useful supporting lemma for the proof of Lemma~\ref{lemma L1}.

\begin{lemma}
	\label{mgt Gaussian} Fixing $\epsilon >0$, let $\{\boldsymbol{Z}_n\}_{n\geq 1}$ be a sequence I.I.D. standard $d$ dimensional Gaussian random vectors (i.e., $\Sigma_n=I_d$ for all $n\geq1$). Then, the random variable defined as
	\begin{equation}
	M_{\epsilon} \triangleq
	\sup_{n\geq 1} \frac{ \|\boldsymbol{Z}_n\|_{\infty}}{n^{\epsilon}},
	\end{equation} 
	 has finite moment-generating
	function (i.e., $\mathbb{E}[e^{tM_{\epsilon}}]<\infty$)  for all $t\geq 0$.
\end{lemma}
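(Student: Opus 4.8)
The plan is to obtain a Gaussian-type tail bound for $M_\epsilon$ by a union bound over $n$, and then integrate this bound against $e^{tx}$ via the layer-cake formula. The whole argument is short; the only place needing a little care is making sure that summing the per-coordinate, per-$n$ Gaussian tails does not spoil the Gaussian decay in $x$ when $\epsilon$ is small (so that $n^{2\epsilon}$ grows slowly). Note that only the case $\Sigma_n = I_d$ is asserted here, so no rescaling is needed.

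First I would recall the elementary scalar estimate $\mathbb{P}(|N(0,1)| > u) \le e^{-u^2/2}$ for $u \ge 0$, so that a union bound over the $d$ coordinates of $\boldsymbol{Z}_n$ gives $\mathbb{P}(\|\boldsymbol{Z}_n\|_\infty > u) \le 2d\, e^{-u^2/2}$ for $u>0$. Hence, for $x>0$,
\[
\mathbb{P}(M_\epsilon > x) \le \sum_{n=1}^\infty \mathbb{P}\big(\|\boldsymbol{Z}_n\|_\infty > x\, n^\epsilon\big) \le 2d \sum_{n=1}^\infty e^{-x^2 n^{2\epsilon}/2}.
\]
Next I would use the elementary inequality that for $x\ge 1$ and $n\ge 1$ one has $\tfrac12 x^2 n^{2\epsilon} \ge \tfrac14 x^2 + \tfrac14 n^{2\epsilon}$; indeed this is equivalent to $x^2(2n^{2\epsilon}-1) \ge n^{2\epsilon}$, which holds because $2n^{2\epsilon}-1 \ge n^{2\epsilon}$ for $n\ge 1$ and $x^2\ge 1$. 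Therefore, for $x \ge 1$,
\[
\mathbb{P}(M_\epsilon > x) \le 2d\, e^{-x^2/4}\sum_{n=1}^\infty e^{-n^{2\epsilon}/4} =: C_\epsilon\, e^{-x^2/4},
\]
where $C_\epsilon < \infty$ since $n^{2\epsilon}\to\infty$.

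Finally I would conclude by the layer-cake representation: for $t \ge 0$,
\[
\mathbb{E}\big[e^{tM_\epsilon}\big] = 1 + t\int_0^\infty e^{tx}\,\mathbb{P}(M_\epsilon > x)\,dx \le 1 + t\, e^{t} + t\, C_\epsilon \int_1^\infty e^{tx - x^2/4}\,dx,
\]
where I bounded $\mathbb{P}(M_\epsilon>x)\le 1$ on $[0,1]$ and used the tail bound on $[1,\infty)$. The last integral is finite because the integrand decays faster than any exponential, so $\mathbb{E}[e^{tM_\epsilon}] < \infty$ for every $t \ge 0$, which is the claim. The main (minor) obstacle is the second step above: verifying that the sum over $n$ preserves the Gaussian rate in $x$, which the stated splitting inequality resolves.
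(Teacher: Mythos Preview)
Your argument is correct and follows essentially the same strategy as the paper's proof: bound the tail $\mathbb{P}(M_\epsilon>x)$ via the Gaussian tail of each $\boldsymbol{Z}_n$ and then integrate using the layer-cake formula. The only difference is cosmetic---you use the union bound $\sum_n p_n$ where the paper writes the exact product $1-\prod_n(1-p_n)$, and your splitting $\tfrac12 x^2 n^{2\epsilon}\ge \tfrac14 x^2+\tfrac14 n^{2\epsilon}$ makes explicit the Gaussian tail $C_\epsilon e^{-x^2/4}$ that the paper leaves implicit in its ``according to calculation''.
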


\begin{proof}[{Proof of Lemma~\ref {lemma L1}}]\label{proofL1}
	
	Let $\{\boldsymbol{V}_{n}\}_{n \geq 1}$ be a sequence of independent $d$ dimensional Gaussian random vectors with distribution  $\mathcal{N}(\boldsymbol{0},\Sigma_n)$, where the covariance matrix $\Sigma_n$ satisfies,
	\begin{equation}
	\|\Sigma_n\|_{F}<L,	\end{equation}
	for all $n\geq1$ as in Assumption \ref{assump:1}. Since $\Sigma_n$ are positive semi-definite matrices, each of them has a unique positive semi-definite square root matrix
	 $\Sigma_n^{\frac{1}{2}}$ \cite{Matrixana}. Moreover, we notice that 
	  \begin{align}
	  \|\Sigma_n^{\frac{1}{2}}\|^2_{F}=&{trace((\Sigma_n^{\frac{1}{2}})^T(\Sigma_n^{\frac{1}{2}}))} \nonumber\\
	  =& trace(\Sigma_n) \nonumber\\
	  =& \sum_{i} \lambda_{i}\nonumber\\
	  \leq & \sum_{i} (\lambda^2_{i}+1) \leq trace(\Sigma_n^T\Sigma_n)+d = \|\Sigma_n\|^2_{F} +d \leq L^2+d
	  \end{align}
	  where $\lambda_i$ are the eigenvalues of $\Sigma_n$. Thus, if we set $L'=\sqrt{L^2+d}>1$, we have $\|\Sigma^{\frac{1}{2}}_n\|_{F}<L' $
	   for all $n \geq 1$ and some $L'>1$. Finally, by the equivalence of matrix norms, we can further find some $L''>1$ such that $\|\Sigma_n^{\frac{1}{2}}\|_{\infty}<L''$,
		for all $n\geq1$ where $\|\cdot\|_{\infty}$ denotes the $L_{\infty}$ norms for matrices. 
		
		Consequently, if we let $\{\boldsymbol{Z}_n\}_{n\geq 1}$ be a sequence of I.I.D. $d$ dimensional  standard Gaussian random vectors and notice that $\{\Sigma^{\frac{1}{2}}_n\cdot \boldsymbol{Z}_n\}_{n \geq 1}$ follows the same  distribution as $\{\boldsymbol{V}_n\}_{n \geq 1}$, we can then define 
	\begin{equation}
	M_{\frac{q-4}{2}}\triangleq \sup\limits_{n\geq 1} \frac{\|\Sigma_n^{\frac{1}{2}}\cdot \boldsymbol{Z}_n\|_{\infty}}{n^{\frac{q-4}{2}}} \leq L'' \sup\limits_{n\geq 1} \frac{\| \boldsymbol{Z}_n\|_{\infty}}{n^{\frac{q-4}{2}}}.
	\end{equation} 
 It then simply follows from Lemma~\ref{mgt Gaussian} that, the random variable $M_{\frac{q-4}{2}}$ has finite moment-generating function for all $t\geq 0$. Thus, if we define 
 \begin{equation}
 N_{\frac{q-4}{2}} \triangleq \sup\limits_{n\geq 1} \frac{\| \boldsymbol{V}_n\|_{\infty}}{n^{\frac{q-4}{2}}},
 \end{equation}
 then the random variable $N_{\frac{q-4}{2}}$ would also have finite moment-generating function for all $t\geq 0$. Finally, according to Assumptions~\ref{assump:1} and~\ref{assump:2}, we have
\begin{equation}
\frac{\|\mu(x)-\mu(y)\|_{\infty}}{\|x-y\|_{\infty}}\leq\sum_{n=1}^{\infty}\frac{\abs{\lambda_n}}{n^{4+\frac{q-4 }{2}}} \frac{\|V_n\|_{\infty}}{n^{\frac{q-4}{2}}}{\frac{\|\psi_n(x)-\psi_n(y)\|_{\infty}}{\|x-y\|_{\infty}}}\leq \sum_{n=1}^{\infty}\frac{\abs{\lambda_n}}{n^{4+\frac{q-4}{2}}} \cdot N_{\frac{q-4}{2}} \cdot nL \leq CN_{\frac{q-4}{2}},
\end{equation}
for some constant $C>1$, which provides a bound for $\|\frac{\partial \boldsymbol{\mu}_i}{\partial x_l}\|_{\infty}$ with finite moment-generating function on the real line.
Using a similar method, we can find bounds with finite moment-generating function on the real line for  $\|\boldsymbol{\mu}\|_{\infty}$ and $\| \frac{\partial^2 \boldsymbol{\mu}_{i}}{\partial x_k\partial x_l}%
\|_{\infty} $ as well. The same bound applies for $\boldsymbol{S}_n$(thus $\boldsymbol{\mu}^{(n)}$, since $\boldsymbol{\mu}^{(n)}=\boldsymbol{S}_{\lfloor 2^{n\gamma} \rfloor}$) and $\bar{\boldsymbol{\mu}}^{(n)}$ for all $n$, and we can thus define a uniform bound finite moment-generating function for all these quantities on the real line denoted by $\boldsymbol{L}_1$. The requirement $\boldsymbol{L}_1>1$ can be added without affecting the result.
\end{proof}

\section{Proof of Supporting Lemmas}

First,  we introduce the following Levy-Ciesielski construction of the Brownian motion
(see, for example \cite{steele2012stochastic}) for the understanding of supporting Lemma~\ref{lcyes}.

\begin{lemma}
	\label{Uhere} Let $\{U_j^m : 1\leq j \leq 2^{m-1},m
	\geq 1\}$ along with $U_0^0$ be a sequence of I.I.D standard normal
	random variables, and we define 
	\begin{equation}
	H(t)\triangleq \mathbf{I}(0\leq t < 1/2)-\mathbf{I}(1/2 \leq t \leq 1)
	\end{equation}
	along with its family of functions $\{H_j^m(t)=2^{m/2}H(2^{m-1}t-j+1): 1\leq j \leq 2^{m-1},m
	\geq 1\}$ and constant function $H_0^0(\cdot)=1$. Now, if we define $B(t)$ for $t \in [0,1]$ by 
	\begin{equation}  \label{levyco}
	B(t)\triangleq U_0^0 \int_0^t H_0^0(s)ds + \sum_{m \geq
		1}\sum_{j=1}^{2^{m-1}}\Big( U_j^m\int_0^t H_j^m(s)ds\Big),
	\end{equation}
	then it can be shown that the right-hand side converges uniformly on [0,1]
	almost surely and the process $\{ B(t) : t \in [0,1]\}$ is a standard
	Brownian motion on [0,1].
\end{lemma}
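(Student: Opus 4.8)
The plan is to recognize the series \eqref{levyco} as the classical L\'evy--Ciesielski construction built from the Haar basis and its Schauder primitives, and to prove the two assertions in turn: (i) the partial sums converge uniformly on $[0,1]$ almost surely, to a process $B$ with continuous sample paths; and (ii) this $B$ is a mean-zero Gaussian process with $\mathbb{E}[B(s)B(t)]=\min(s,t)$, which together with path continuity characterizes standard Brownian motion on $[0,1]$.

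First I would set up the Schauder functions $S_0^0(t)=\int_0^t H_0^0(s)\,ds=t$ and $S_j^m(t)=\int_0^t H_j^m(s)\,ds$, and record the elementary facts that for each fixed level $m\ge1$ the functions $S_1^m,\dots,S_{2^{m-1}}^m$ are nonnegative triangular bumps supported on the disjoint dyadic intervals $\big[(j-1)2^{-(m-1)},\,j2^{-(m-1)}\big]$, with $\|S_j^m\|_\infty\le 2^{-m/2}$, and that $\{H_0^0\}\cup\{H_j^m\}$ is, after the obvious normalization, a complete orthonormal system of $L^2[0,1]$ with $S_j^m(t)=\langle\mathbf{1}_{[0,t]},H_j^m\rangle_{L^2}$. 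Writing $\Xi_M(t)\triangleq U_0^0S_0^0(t)+\sum_{m=1}^M\sum_{j=1}^{2^{m-1}}U_j^mS_j^m(t)$ for the $M$-th partial sum, disjointness of the supports within a level gives the key estimate
\[
\sup_{0\le t\le1}\Big|\sum_{j=1}^{2^{m-1}}U_j^mS_j^m(t)\Big|\;\le\;2^{-m/2}\max_{1\le j\le 2^{m-1}}|U_j^m|.
\]

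The main obstacle, and the only genuinely nonroutine step, is the almost sure uniform convergence, which I would handle as follows. The standard Gaussian tail bound $\mathbb{P}(|U_j^m|>x)\le e^{-x^2/2}$ together with a union bound gives $\mathbb{P}\big(\max_{1\le j\le2^{m-1}}|U_j^m|>c\sqrt m\big)\le 2^{m-1}e^{-c^2m/2}$, which is summable in $m$ as soon as $c$ is chosen with $c^2/2>\ln2$. By the Borel--Cantelli lemma there is an almost surely finite random index $M_0$ with $\max_j|U_j^m|\le c\sqrt m$ for all $m\ge M_0$, hence $\sum_{m\ge1}2^{-m/2}\max_j|U_j^m|<\infty$ almost surely. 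Combined with the level-wise estimate above, $(\Xi_M)_{M\ge0}$ is almost surely a uniformly Cauchy sequence in $C([0,1])$, so it converges uniformly to a limit $B$ with continuous sample paths; this establishes the convergence assertion and continuity of $t\mapsto B(t)$.

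Finally I would identify the law of $B$. Each $\Xi_M$ is a finite linear combination of the i.i.d.\ standard normals $U_j^m$, so $(\Xi_M(t_1),\dots,\Xi_M(t_k))$ is Gaussian for all times $t_1,\dots,t_k$; Gaussianity passes to the almost sure (hence in-distribution) limit, so $(B(t_1),\dots,B(t_k))$ is mean-zero Gaussian. By Parseval's identity applied to the Haar orthonormal system, summing over all basis functions $H$,
\[
\mathbb{E}[B(s)B(t)]=\sum_{H}\langle\mathbf{1}_{[0,s]},H\rangle\,\langle\mathbf{1}_{[0,t]},H\rangle=\langle\mathbf{1}_{[0,s]},\mathbf{1}_{[0,t]}\rangle_{L^2}=\min(s,t),
\]
the interchange of expectation with the series being licensed by the uniform (hence $L^2$) convergence established above. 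A continuous mean-zero Gaussian process with covariance $\min(s,t)$ has $B(0)=0$ and, for any $0\le t_0<\cdots<t_k\le1$, increments $B(t_i)-B(t_{i-1})$ that are jointly Gaussian with zero cross-covariances and variance $t_i-t_{i-1}$; jointly Gaussian and uncorrelated implies independent, so the increments are independent and $N(0,t_i-t_{i-1})$-distributed. Hence $B$ is a standard Brownian motion on $[0,1]$, which completes the proof; everything except the Borel--Cantelli/Gaussian-maximal step is routine bookkeeping with the Haar--Schauder system and Parseval.
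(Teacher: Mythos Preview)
Your proof is correct and is precisely the classical L\'evy--Ciesielski argument. The paper does not actually prove this lemma; it simply refers the reader to Section~2.3 of Karatzas--Shreve, which contains essentially the same argument you have written out (Schauder tents with disjoint supports at each level, Gaussian maximal bound plus Borel--Cantelli for uniform convergence, and Parseval in the Haar basis for the covariance), so there is nothing to compare.
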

\begin{proof}
	See Section 2.3 of \cite{karatzas2012brownian}.
\end{proof}

This
theoretical construction provides a way to sample Brownian motion by
sampling independent Gaussian random variables. Here, for ${d^{\prime}}$-dimensional Brownian
motion we use ${d^{\prime}}$-dimensional Gaussian random variables. Furthermore, using Lemma~\ref{Uhere} and the fact that changing the sign of a standard Gaussian variable does not change its distribution, we have the following corollary on $B^{(n+1),a}(t)$ related to Definition~\ref{adefn}.
\begin{coro}
	Fixing $n \geq 0$ and the sequence of I.I.D. standard normal
	random variables $\{U_j^m : 1\leq j \leq 2^{m-1},m
	\geq 1\}$ along with $U_0^0$, we can define
	\begin{equation}  \label{levycoa}
	B^{n+1,a}(t)\triangleq U_0^0 \int_0^t H_0^0(s)ds +\sum_{j=1}^{2^{n}}\Big(- U_j^{n+1}\int_0^t H_j^{n+1}(s)ds\Big)+ \sum_{\substack{ m \geq 1 
			\\ m \neq n+1}}\sum_{j=1}^{2^{m-1}}\Big( U_j^m\int_0^t H_j^m(s)ds\Big),
	\end{equation}
	which is a Brownian motion on [0,1].
\end{coro}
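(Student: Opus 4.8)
The plan is to reduce the claim to a direct application of Lemma~\ref{Uhere}, exploiting the fact that $B^{n+1,a}$ is built from the same Levy-Ciesielski series \eqref{levyco} with only the level-$(n+1)$ Gaussian coefficients $\{U_j^{n+1}: 1\le j\le 2^n\}$ replaced by their negatives. The whole argument rests on the elementary observation that applying a deterministic sign flip to some members of an i.i.d.\ standard Gaussian family leaves the joint law of the family unchanged, because the standard normal law is symmetric about the origin and coordinatewise sign flips preserve mutual independence.

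First I would introduce relabelled coefficients: set $\tilde U_0^0\triangleq U_0^0$, $\tilde U_j^m\triangleq U_j^m$ for all $m\ge 1$ with $m\ne n+1$ and $1\le j\le 2^{m-1}$, and $\tilde U_j^{n+1}\triangleq -U_j^{n+1}$ for $1\le j\le 2^n$. Each $\tilde U_j^m$ is standard normal by symmetry of the Gaussian law, and the map $(U_j^m)_{j,m}\mapsto(\tilde U_j^m)_{j,m}$ acts coordinatewise as either the identity or multiplication by $-1$, hence preserves independence. Therefore $\{\tilde U_j^m: 1\le j\le 2^{m-1},\, m\ge 1\}$ together with $\tilde U_0^0$ is again a sequence of i.i.d.\ standard normal random variables.

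Next I would invoke Lemma~\ref{Uhere} verbatim with $\tilde U$ in place of $U$: the series $\tilde U_0^0\int_0^t H_0^0(s)\,ds+\sum_{m\ge 1}\sum_{j=1}^{2^{m-1}}\tilde U_j^m\int_0^t H_j^m(s)\,ds$ converges uniformly on $[0,1]$ almost surely and defines a standard Brownian motion on $[0,1]$. Substituting $\tilde U_j^{n+1}=-U_j^{n+1}$ back in and keeping $\tilde U_j^m=U_j^m$ for $m\ne n+1$, this series is exactly the right-hand side of \eqref{levycoa}; hence $B^{n+1,a}$ is a standard Brownian motion on $[0,1]$.

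I do not anticipate a real technical obstacle. The only point worth flagging is that the conclusion is a statement about the \emph{law} of $B^{n+1,a}$ --- it \emph{is} a Brownian motion --- and not that $B^{n+1,a}$ coincides pathwise with the $B$ of \eqref{levyco}; indeed the two paths differ precisely on the level-$(n+1)$ Schauder increments. This distributional identity, together with the shared coefficients $\{U_j^m\}$, is exactly what is needed for the antithetic coupling in Definition~\ref{adefn}, where one only uses that $X_{n+1}^a(1)$ shares the marginal law of $X_{n+1}^f(1)$.
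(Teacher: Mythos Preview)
Your proposal is correct and matches the paper's approach exactly: the paper's entire justification is the sentence preceding the corollary, ``using Lemma~\ref{Uhere} and the fact that changing the sign of a standard Gaussian variable does not change its distribution,'' and your argument is precisely a careful expansion of that remark.
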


\begin{lemma}
	\label{lcyes1} Given a sequence of I.I.D. standard normal $\{U_j^m : 1\leq j \leq 2^{m-1},m
	\geq 1\}$ along with $U_0^0$ and fixing $n \geq 0$, define $B(t), 0\leq t \leq 1$ as in \eqref{levyco} and $B^{n+1,a}(t),0\leq t \leq 1$ as in %
	\eqref{levycoa}. Then for $1\leq k \leq 2^{n+1}$, let 
	\begin{align}
	\Delta B_k^{n+1} =&
	B(t^{n+1}_{k+1})-B(t_k^{n+1})\nonumber\\
	\Delta B_k^{n+1,a}=&B^{(n+1),a}(t^{n+1}_{k+1})-B^{(n+1),a}(t^{n+1}_k).
	\end{align}
	Then $\Delta B_k^{n+1}$ and $\Delta B_k^{n+1,a}$ satisfy equations  \eqref{antitheticB} and thus \eqref{trihere} in Definition~\ref{adefn}. Thus, we may regard $X_{n+1}^a(\cdot)$ to be an
	antithetic scheme $X_{n+1}(\cdot)$ generated under Brownian motion $B^{n+1,a}(\cdot)$ instead of $B(\cdot)$.
\end{lemma}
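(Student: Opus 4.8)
The plan is to expand $B$ through the Lévy--Ciesielski series \eqref{levyco} and group its terms by resolution level. Write $I_j^m(t)\triangleq\int_0^t H_j^m(s)\,ds$ and $B=L+D+R$, where $L$ collects the $U_0^0$ term together with all levels $m\le n$, $D(t)=\sum_{j=1}^{2^n}U_j^{n+1}I_j^{n+1}(t)$ is the single critical level $m=n+1$, and $R(t)=\sum_{m\ge n+2}\sum_{j}U_j^m I_j^m(t)$ collects all finer levels. By the corollary preceding the statement, $B^{n+1,a}$ has the identical expansion except that the level-$(n+1)$ coefficients are negated, that is $B^{n+1,a}=L-D+R$; hence the increments $\Delta B_k^{n+1}$ and $\Delta B_k^{n+1,a}$ over the mesh $t_k^{n+1}=k2^{-(n+1)}$ differ only through the sign of the contribution of $D$.

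Next I would record three elementary facts about the primitives $I_j^m$. (i) For $m\le n$ the break points of $I_j^m$ are multiples of $2^{-m}$, hence multiples of $2^{-n}$; therefore $L$ is affine on each coarse interval $[t_l^n,t_{l+1}^n]$, so its increments over the two halves $[t_{2l}^{n+1},t_{2l+1}^{n+1}]$ and $[t_{2l+1}^{n+1},t_{2l+2}^{n+1}]$ are equal — call this common value $\delta_l$. (ii) For $m\ge n+2$ the tent $I_j^m$ is supported on $[t_{j-1}^{m-1},t_j^{m-1}]$ and vanishes at every integer multiple of $2^{-(m-1)}$; since $m-1\ge n+1$, each mesh point $t_k^{n+1}$ is such a multiple, so $R(t_k^{n+1})=0$ for all $0\le k\le 2^{n+1}$. (iii) On $[t_l^n,t_{l+1}^n]$ the only term of $D$ that is not identically zero is $U_{l+1}^{n+1}I_{l+1}^{n+1}$, and $I_{l+1}^{n+1}$ rises linearly from $0$ at $t_{2l}^{n+1}$ to its peak $2^{-(n+1)/2}$ at $t_{2l+1}^{n+1}$ and back to $0$ at $t_{2l+2}^{n+1}$; thus $D$ contributes $+U_{l+1}^{n+1}2^{-(n+1)/2}$ to the first half-increment and $-U_{l+1}^{n+1}2^{-(n+1)/2}$ to the second.

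Combining (i)--(iii) yields, for $0\le l\le 2^n-1$, $\Delta B_{2l}^{n+1}=\delta_l+U_{l+1}^{n+1}2^{-(n+1)/2}$ and $\Delta B_{2l+1}^{n+1}=\delta_l-U_{l+1}^{n+1}2^{-(n+1)/2}$; negating $D$ flips only the sign of the $U_{l+1}^{n+1}$ term and leaves $\delta_l$ and the identically vanishing contribution of $R$ untouched, so $\Delta B_{2l}^{n+1,a}=\Delta B_{2l+1}^{n+1}$ and $\Delta B_{2l+1}^{n+1,a}=\Delta B_{2l}^{n+1}$, which are exactly the identities \eqref{antitheticB}; adding the two recovers \eqref{trihere}. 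For the last assertion of the lemma, the level-$(n+1)$ recursion \eqref{coarse} and the quantities $\widetilde A_{mj}(t_k^{n+1},t_{k+1}^{n+1})$ of Definition~\ref{cdefn} depend on the driving path only through its increments on the mesh $\{t_k^{n+1}\}$; since those increments for $B^{n+1,a}$ are precisely $\{\Delta B_k^{n+1,a}\}$, the scheme $X_{n+1}^a(\cdot)$ of Definition~\ref{adefn} equals the level-$(n+1)$ discretization driven by the Brownian path $B^{n+1,a}$.

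The one substantive point, which I would highlight, is fact (ii): the contributions of all Haar levels strictly finer than $n+1$ cancel exactly at the mesh $\{t_k^{n+1}\}$ and so are invisible to the level-$(n+1)$ scheme. This is why negating the single block of coefficients $\{U_j^{n+1}\}$ suffices to realize the half-swap of Definition~\ref{adefn}; were $R$ not to vanish on the mesh one would instead be forced to reflect the whole dyadic subtree beneath each coarse interval. Everything else — reconciling the $0$-indexed time mesh with the $1$-indexed Haar family and checking the peak value $2^{-(n+1)/2}$ — is bookkeeping.
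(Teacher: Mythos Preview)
Your proof is correct and follows essentially the same approach as the paper's: both arguments reduce to the Haar--wavelet observation that the level-$(n+1)$ contribution is antisymmetric over the two halves of each coarse interval while every other level contributes symmetrically. The paper states this as the single identity $\int_{t^{n+1}_{2k}}^{t^{n+1}_{2k+1}}H_j^m\,dt=\pm\int_{t^{n+1}_{2k+1}}^{t^{n+1}_{2k+2}}H_j^m\,dt$ (with sign $-$ iff $m=n+1$), whereas you split the non-critical levels into $L$ (coarser, affine on coarse cells) and $R$ (finer, vanishing on the mesh); your decomposition is slightly more explicit but the content is the same.
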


\begin{proof}[Proof of Lemma~\ref{lcyes1}]
	Following Definition~\ref{Uhere}, fixing  $n \geq 1$ and $0 \leq k \leq 2^{n-1}-1$, we observe that  
	\begin{equation}\label{Bproof0}
	\begin{cases}
	\int_{t^{n}_{2k}}^{t^{n}_{2k+1}}H^m_j(t) dt =  \int_{t^{n}_{2k+1}}^{t^{n}_{2k+2}}H^m_j(t) dt &\quad \text{for all} \quad m \neq n \quad \text{and} \quad 1\leq j \leq 2^{m-1} \\
	\int_{t^{n}_{2k}}^{t^{n}_{2k+1}}H^m_j(t) dt =  -\int_{t^{n}_{2k+1}}^{t^{n}_{2k+2}}H^m_j(t) dt &\quad \text{for all} \quad m = n \quad \text{and} \quad 1\leq j \leq 2^{m-1}.
	\end{cases}
	\end{equation}
 Thus, we have that, for $0\leq k \leq 2^{n}-1$,
	\begin{align}
	B^{n+1,a}(t^{n+1}_{2k+1})-B^{n+1,a}(t^{n+1}_{2k})=&B(t^{n+1}_{2k+2})-B(t^{n+1}_{2k+1})= \Delta B_{2k}^{n+1,a}\notag\\
	B^{n+1,a}(t^{n+1}_{2k+2})-B^{n+1,a}(t^{n+1}_{2k+1})=&B(t^{n+1}_{2k+1})-B(t^{n+1}_{2k})= \Delta B_{2k+1}^{n+1,a}
	\end{align}
	by simply taking the difference in \eqref{levycoa} and checking \eqref{Bproof0}.
\end{proof}

\begin{proof}[Proof of Lemma \protect\ref{jose1}]
	Following Definition~\ref{add15}, define $R_{i,j}^n(t_l^n,t_m^n) = \sum_{k=l+1}^m {A}_{i,j}(t_{k-1}^n,t_k^n)$ for $ 0\leq l < m \leq 2^n, 1 \leq i,j \leq d^{\prime}$ and $i\neq j$. Then, we can define  
	\begin{equation}
	\Gamma _{{R}%
	}\triangleq \sup_{n \geq 1}\sup_{\substack{ 0\leq s\leq t\leq 1  \\ s,t\in D_{n}}}%
	\max_{1\leq i,j\leq d^{\prime}, i \neq j}\frac{\lvert {R}_{i,j}^{n}(s,t)\rvert }{%
		\lvert t-s\rvert ^{\beta }\Delta t_{n}^{2\alpha -\beta }} \quad \text{and} \quad \Gamma _{R-\tilde{R}%
	}\triangleq \sup_{n \geq 1}\sup_{\substack{ 0\leq s\leq t\leq 1  \\ s,t\in D_{n}}}%
	\max_{1\leq i,j\leq d^{\prime}, i \neq j}\frac{|R^n_{i,j}(s,t)-\tilde{R}^n_{i,j}(s,t)|}{%
		\lvert t-s\rvert ^{\beta }\Delta t_{n}^{2\alpha -\beta }},
	\end{equation}
	Observing the definition for both the case $i=j$ and $i\neq j$, we have the following bound: \begin{equation}\label{add12}
	\|\tilde{A}\|_{2\alpha} \leq \|A\|_{2\alpha}+\|B\|^2_{\alpha}\qquad \text{and} \qquad\Gamma_{\tilde{R}} \leq \Gamma_{R}+\Gamma_{R-\tilde{R}}.
	\end{equation}
	Now, following Lemma 3.1 in \cite{blanchet2017}, we define a family of random variables ($L^n_{i,j}(k): k=0,1,...,2^{n-1}, 1\leq i,j \leq d^{\prime} , i\neq j, n \geq 1$) satisfying:
	\begin{align}
	L_{i,j}^{n}\left( 0\right) &=0\nonumber\\
	L_{i,j}^{n}\left( k\right) &=L_{i,j}^{n}\left( k-1\right) +\left( B_{i}\left(
	t_{2k-1}^{n}\right) -B_{i}\left( t_{2k-2}^{n}\right) \right) \left(
	B_{j}\left( t_{2k}^{n}\right) -B_{j}\left( t_{2k-1}^{n}\right) \right).
	\end{align} 
	Then, following Lemma 3.4 and its proof in \cite{blanchet2017}, we define, for $1 \leq i,j \leq d^{\prime}$ and $i \neq j$,
	\begin{equation}
	N_{i,j,2}=\max\{n:|L^n_{i,j}(m)-L^n_{i,j}(l)|>(m-l)^{\beta}\Delta t^{2\alpha}_n \quad\text{for some} \quad 0 \leq l <m \leq 2^{n-1}\}, 
	\end{equation} 
	and define $N_2=\max\{ N_{i,j,2}:1\leq i,j \leq d^{\prime}, i\neq j \}$ along with
	\begin{equation*}
	\Gamma _{L}\triangleq\max \{1,\max_{1\leq i,j\leq d^{\prime},i\neq j}\max_{n<N_{2}}\max_{0\leq
		l<m<2^{n-1}}\frac{|L_{i,j}^{n}\left( m\right) -L_{i,j}^{n}\left( l\right) |}{%
		\left( m-l\right) ^{\beta }\Delta t_n^{2\alpha }}\}.
	\end{equation*}%
	Finally, we can use Definition~\ref{add15} and apply the result of Lemma 3.5 in \cite{blanchet2017} to write:
	\begin{align}\label{add13}
	\Gamma_{R} \leq \frac{2^{-(2\alpha-\beta)}}{1-2^{-(2\alpha-\beta)}}\cdot\Gamma_{L}, \qquad \text{and} \qquad \|A\|_{2\alpha} \leq \Gamma_{R}\cdot\frac{2}{1-2^{-2\alpha}}+\|B\|^2_{\alpha}\cdot\frac{2^{1-\alpha}}{1-2^{-\alpha}}.
	\end{align}
	
	Combining the result from \eqref{add12} and \eqref{add13},  to conclude the proof, it suffices to show that $\|B\|_{\alpha},\Gamma_{L}$ and $\Gamma_{R-\tilde{R}}$ has finite moments of every order. The fact
	that $\Vert B\Vert _{\alpha }$ has finite moments of every order follows
	from Borell's inequality for continuous Gaussian random fields (see Section 2.3 of \cite{adler2003random}). To show that $\Gamma_{L}$ has finite moments of every order, we first follow the proof of Lemma 3.4 in \cite{blanchet2017} to show that 
	\begin{align}
	\mathbb{P}(N_{i,j,2}\geq n) &\leq \sum_{h=n}^{\infty}\mathbb{P}(|L_{i,j}^h(m)-L_{i,j}^h(l)| > (m-l)^{\beta} \Delta t_n^{2\alpha} \quad \text{for some} \quad 0\leq l< m \leq 2^{n-1}) \nonumber\\
	&\leq  \sum_{h=n}^{\infty}2^{2h} \exp(-\theta^{\prime}2^{h(1-2\alpha)}) \nonumber\\
	&\leq \exp(-\frac{\theta^{\prime}}{2}\cdot 2^{n(1-2\alpha)}) \sum_{h=0}^{\infty}2^{2h} \exp(-\frac{\theta^{\prime}}{2}\cdot 2^{h(1-2\alpha)}) \nonumber \\
	& \leq C\exp(-\frac{\theta^{\prime}}{2}\cdot 2^{n(1-2\alpha)})
	\end{align}
	for some $C>1$ and $\theta^{\prime}>0$. It follows that,
	\begin{equation}\label{Tail_N2}
	\mathbb{P}( N_{2}\geq n) \leq C(d^{\prime})^2 \exp(-\frac{\theta^{\prime}}{2}\cdot 2^{n(1-2\alpha)}) . 
	\end{equation}
	Therefore, we can show
	
	\begin{equation}\label{add9}
	\mathbb{E}\left( \exp \left( \eta N_{2}\right) \right) \leq \sum_{n=1}^{\infty }C(d^{\prime})^2\exp
	\left( \eta n\right)  \exp(-\frac{\theta^{\prime}}{2}\cdot 2^{n(1-2\alpha)}) <\infty 
	\end{equation}%
	for every $\eta>0$. On the other hand, since for $m>l, n\leq N_2$, we have
	\begin{equation}
{(m-l)^{-\beta}\Delta t_n^{-2\alpha}} ={(m-l)^{-\beta}2^{2\alpha n}}  \leq 2^{2\alpha N_2},
	\end{equation}
	\begin{equation} 
	\Gamma_{L} \leq 1+2^{2\alpha N_2}\cdot\big(\max_{1\leq i,j\leq d^{\prime},i\neq j}\max_{n<N_{2}}\max_{0\leq
		l<m<2^{n-1}}{|L_{i,j}^{n}\left( m\right) -L_{i,j}^{n}\left( l\right) |}\big).
	\end{equation} 
	Since $N_{2}$ has a finite moment-generating
	function on the whole real line according to \eqref{add9},  in order to establish that $\Gamma _{L}$ has
	finite moments of every order, it suffices to show that 
	\begin{equation*}
	\mathbb{E}\left[ \left( \sum_{n=1}^{N_{2}}\sum_{1<l<m<2^{n-1}}\sum_{1\leq i,j\leq
		d^{\prime},i\neq j}|L_{i,j}^{n}\left( m\right) -L_{i,j}^{n}\left( l\right) |\right) ^{k}%
	\right] <\infty 
	\end{equation*}%
	for every $k\geq 1$. Letting $\bar{n}$ be the number of total elements being
	summed up inside the previous expectation, it follows that  $\bar{n}\leq N_2\cdot 2^{2N_{2}}(d^{\prime})^{2}$
	and therefore, by \eqref{elementary inequality}, that
	\begin{align}\label{add10}
	&\mathbb{E}\left[ \left( \sum_{n=1}^{N_{2}}\sum_{1<l<m<2^{n-1}}\sum_{1\leq i,j\leq
		d^{\prime},i\neq j}|L_{i,j}^{n}\left( m\right) -L_{i,j}^{n}\left( l\right) |\right) ^{k}%
	\right]   \nonumber \\
	\leq &\mathbb{E}\left[ \bar{n}^{k-1}\sum_{n=1}^{N_{2}}\sum_{1<l<m<2^{n-1}}\sum_{1%
		\leq i,j\leq d^{\prime},i\neq j}|L_{i,j}^{n}\left( m\right) -L_{i,j}^{n}\left( l\right) |^{k}%
	\right]   \nonumber \\
	\leq & \sum_{n=1}^{\infty }\sum_{1<l<m<2^{n-1}}\sum_{1\leq i,j\leq d^{\prime},i\neq j}\mathbb{E}\left[
	(N_2\cdot 2^{2N_{2}}(d^{\prime})^{2})^{k-1}|L_{i,j}^{n}\left(
	m\right) -L_{i,j}^{n}\left( l\right) |^{k}I\left( N_{2}\geq n\right) \right].
	\end{align}
	To bound the term in \eqref{add10}, we first show that, fixing any $h\geq 1$, $\mathbb{E}|L_{i,j}^n(m)-L_{i,j}^n(l)|^h$ is uniformly bounded for any $n \geq 1, 1 \leq l<m \leq 2^{n-1},  1 \leq i,j \leq n$ and $i \neq j$. 
	
	Let $\{Y_{i^{\prime}}\}_{i^{\prime}\geq 1}$ be I.I.D. random variables such that $Y\stackrel{\mathcal{D}}{=}Z_1\cdot Z_2$ where $Z_1,Z_2$ are independent standard normal random variables. It follows from H\"older's inequality and Jensen's inequality that we can find $C_h > 0$ such that $\mathbb{E}|\frac{\sum_{i=1}^n Y_{i^{\prime}}}{n}|^h < C_h$ for all $n\geq 1$. Then $\mathbb{E}|L_{i,j}^n(m)-L_{i,j}^n(l)|^h < C_h$ follows from $|L_{i,j}^n(m)-L_{i,j}^n(l)|\stackrel{d}{=}|\Delta t_n\sum_{i^{\prime}=1}^{m-l} Y_{i^{\prime}} |\leq | \frac{\sum_{i^{\prime}=1}^{m-l} Y_{i^{\prime}}}{m-l}|$. Specifically $\mathbb{E}|L_{i,j}^n(m)-L_{i,j}^n(l)|^{4k} < C_{4k}$ for all $n \geq 1$. Now we can use H\"older's inequality multiple times and the fact that $N_2$ has moment-generating function to conclude:  
	\begin{align}
	E\left[ (d^{\prime})^{2\left( k-1\right) }2^{3N_{2}\left( k-1\right)
	}|L_{i,j}^{n}\left( m\right) -L_{i,j}^{n}\left( l\right) |^{k}I\left(
	N_{2}\geq n\right) \right]  &\leq C^{\prime }f\left( N_{2}\geq n\right) ^{1/2}
	\end{align}%
	for some $C^{\prime }>0$ and therefore, it follows from \eqref{Tail_N2} and \eqref{add10} that $\Gamma _{L}$ has moments of every order.
	
	Finally, to show that $\Gamma_{R-\tilde{R}}$ has finite moments of every order, we define another family of random variables ($\tilde{L}^n_{i,j}(k): k=0,1,...,2^{n}, 1\leq i,j \leq d^{\prime} , i\neq j, n \geq 1$) satisfying:
	\begin{align}
	\tilde{L}_{i,j}^{n}\left( 0\right) &=0\nonumber\\
	\tilde{L}_{i,j}^{n}\left( k\right) &=\tilde{L}_{i,j}^{n}\left( k-1\right) +\left( B_{i}\left(
	t_{k}^{n}\right) -B_{i}\left( t_{k-1}^{n}\right) \right) \left(
	B_{j}\left( t_{k}^{n}\right) -B_{j}\left( t_{k-1}^{n}\right) \right),
	\end{align} 
	and similarly define 
	\begin{equation}
	\tilde{N}_2=\max\{n:|\tilde{L}^n_{i,j}(m)-\tilde{L}^n_{i,j}(l)|>(m-l)^{\beta}\Delta t_n^{2\alpha} \quad \text{for some}\quad 0 \leq l<m \leq 2^n, 1\leq i,j\leq d^{\prime}, i\neq j\},
	\end{equation}
	\begin{equation*}
	\Gamma _{\tilde{L}}\triangleq\max \{1,\max_{1\leq i,j\leq d^{\prime},i\neq j}\max_{n<\tilde{N}_{2}}\max_{0\leq
		l<m<2^{n-1}}\frac{|\tilde{L}_{i,j}^{n}\left( m\right) -\tilde{L}_{i,j}^{n}\left( l\right) |}{%
		\left( m-l\right) ^{\beta }\Delta t_n^{2\alpha }}\}.
	\end{equation*}%
	Then, for $1 \leq i,j \leq d^{\prime}, i\neq j, n\geq 1$ and $0 \leq s < t \leq 1$, $s,t \in D_n$, we have
	\begin{equation} R_{i,j}^n(s,t)-\tilde{R}_{i,j}^n(s,t)=\sum_{k=s2^n+1}^{t2^n} \widetilde{A}_{i,j}(t_{k-1}^n,t_k^n)=\tilde{L}_{i,j}^n(t2^n)-\tilde{L}_{i,j}^n(s2^n),
	\end{equation}
	which implies $\Gamma_{R-\tilde{R}}\leq\Gamma_{\tilde{L}}$. We can now proceed to show $\Gamma_{\tilde{L}}$ has finite moments of every order in the similar fashion as we did for $\Gamma_{L}$. This completes the proof.
\end{proof}

\begin{proof}[Proof of Lemma~\ref{note lemma3}]
	Let $X^{\mathcal{M}}_{n}(\cdot)$ be the following Milstein discretization scheme with step size $2^{-n}$  :
	\begin{align}
	X^{\mathcal{M}}_{i,n}(t_{k+1}^n)&=X^{\mathcal{M}}_{i,n}(t_{k}^n)+\mu_i(X^{\mathcal{M}}_n(t_{k}^n))\Delta t_n +\sum_{j=1}^{d^{\prime}} \sigma_{ij}(X^{\mathcal{M}}_n(t_{k}^n))\Delta B_{j,k}^n\nonumber\\
	&+ \sum_{j=1}^{d^{\prime}}\sum_{l=1}^d\sum_{m=1}^{d^{\prime}} \frac{\partial \sigma_{ij}}{\partial x_l}(X^{\mathcal{M}}_{n}(t_{k}^n))\sigma_{lm}(X^{\mathcal{M}}_{n}(t_{k}^n))A_{mj}(t_{k}^n,t_{k+1}^n),
	\end{align}
	where we use $A_{ij}(s,t)$ instead of $\widetilde{A}_{ij}(s,t)$ defined in \eqref{jose1}. (This distinguishes $X^{\mathcal{M}}_{n}(\cdot)$ from $X_n(\cdot)$, our antithetic scheme.).
	Then, fixing $\mu\in\mathcal{L}_1$ and $\mu^{(n)}\in\mathcal{L}_1$ with bounding number $L_1$, we can compute constant $C_1$ explicitly in terms of $L_1,\|B\|_{\alpha}$ and $\|A\|_{2\alpha}$ (originally denoted as $M,\|Z\|_{\alpha}$ and $\Aaa$ in \cite{blanchet2017}) such that for $n$ large enough and $r,t \in D_n$,
	\begin{equation}
	\|X^{\mathcal{M}}_n(t)-X^{\mathcal{M}}_n(r)\|_{\infty} \leq C_1\abs{t-r}^{\alpha}
	\end{equation}
	See page $305$ of \cite[Lemma 6.1]{blanchet2017}. 
	To get the result for $X_n(\cdot)$ instead of $X^{\mathcal{M}}_n(\cdot)$, we follow page $283$ of \cite[Lemma 2.1]{blanchet2017}, replacing $\Aaa $ by $\Aaaa$ in notation, we define
	\begin{equation}
	\begin{cases}
	C_1(\delta)&=\bar{d} L_1\|B\|_{\alpha} +1/2 \\ C_2(\delta)&=\bar{d}^3 L_1^2 \|A\|_{2\alpha} +1/2\\   C_3(\delta)&=\frac{2}{1-2^{1-3\alpha}}(\bar{d} L_1 C_1(\delta)^2 \|B\|_{\alpha} +\bar{d}^2 L_1 C_2(\delta) \|B\|_{\alpha} +\bar{d} ^2 L_1^2 \|B\|_{\alpha}+2\bar{d}^3 L_1^2 C_1(\delta)\|A\|_{2\alpha})\\
	\end{cases}
	\end{equation} 
	and we then find some fixed polynomial  $\mathcal{P}(x)>1$ for $x>1$ so that if $\delta = (\mathcal{P}(L_1,\|B\|_{\alpha},\|A\|_{2\alpha}))^{-1}$, then
	\begin{equation}\label{deltasmall1}
	C_3(\delta) \delta^{2\alpha}+L_1\delta^{1-\alpha}+\dbar^3L_1^2\|A\|_{2\alpha} \delta^{\alpha} < 1/2 \qquad \text{also} \qquad
	C_3(\delta) \delta^{\alpha} < 1/2,
	\end{equation}
	so that Equation $(6.4)$ in page $308$ of \cite[Lemma 6.1]{blanchet2017} is satisfied: 
	\begin{equation}
	\begin{cases}
	C_1(\delta)&\geq \bar{d} L_1 \|B\|_{\alpha} +L_1\delta^{1-\alpha}+\bar{d}L_1\|B\|_{\alpha}+\bar{d}^3L_1^2\|\widetilde{A}\|_{2\alpha}\delta^{\alpha} \\ C_2(\delta)&\geq \bar{d}^3 L_1^2 \|A\|_{2\alpha} +\bar{d}^3L_1^2\|\widetilde{A}\|_{2\alpha} \\   C_3(\delta)&\geq \frac{2}{1-2^{1-3\alpha}}(\dbar L_1 C_1(\delta)^2 \|B\|_{\alpha} +\bar{d}^2 L_1 C_2(\delta) \|B\|_{\alpha} +\bar{d} ^2 L_1^2 \|B\|_{\alpha}+2\bar{d}^3 L_1^2  C_1(\delta)\|A\|_{2\alpha})\\
	\end{cases}
	\end{equation} 
	which gives, according to line $12-17$ of page $308$ of \cite[Lemma 6.1]{blanchet2017}, that 
	\begin{equation}\label{haha1}
	\|X_n(t)-X_n(r)\|_{\infty} \leq \frac{2}{\delta}C_1(\delta)\abs{t-r}^{\alpha}
	\end{equation}
	for all $n$ large enough where $\Delta t_n \leq \frac{1}{2} \delta$. Notice here we have changed the result to address $X_n(\cdot)$ instead of $X^{\mathcal{M}}_n(\cdot)$, and so far it just follows from an easy modification of \cite[Lemma 6.1]{blanchet2017}.\\
	Now, to extend the result for $n$ where $\Delta t_n > \frac{\delta}{2}$, notice the recursion step in \eqref{coarse} is carried out at most $2^n$ number of times where $2^n= (\Delta t_n)^{-1} < 2(\delta)^{-1}=2\mathcal{P}(L_1,\|B\|_{\alpha},\|A\|_{2\alpha})$. By analyzing $\eqref{coarse}$ term by term, we have
	\begin{align} \|X_n(t_{k+1}^n)-X_n(t_{k}^n)\|_{\infty} \leq& \bar{d}(C L_1\Delta t_n +\bar{d} L\|B\|_{\alpha}\Delta t_n^{\alpha}+\bar{d}^3L^2\|A\|_{2\alpha}\Delta t_n^{2\alpha}) \nonumber\\
	\leq& \bar{d}(C L_1 +\bar{d} L\|B\|_{\alpha}+\bar{d}^3L^2\|A\|_{2\alpha})\Delta t_n^{\alpha} 
	\end{align}
 for some $C>1$. Since $\Delta t_n < 1$, thus, for $\Delta t_n > \frac{\delta}{2}$,
	\begin{align}\label{haha2}
	\|X_n(t)-X_n(r)\|_{\infty} & \leq \frac{\abs{t-r}}{\Delta t_n}\bar{d}(C L_1 +\bar{d} L\|B\|_{\alpha}+\bar{d}^3L^2\|A\|_{2\alpha})\Delta t_n^{\alpha} \nonumber \\
	&\leq  \bar{d}(C L_1 +\bar{d} L\|B\|_{\alpha}+\bar{d}^3L^2\|A\|_{2\alpha})\abs{t-r}^{\alpha}\frac{1}{\Delta t_n^{1-\alpha}} \nonumber \\
	&\leq \bar{d}(C L_1 +\bar{d} L\|B\|_{\alpha}+\bar{d}^3L^2\|A\|_{2\alpha})\abs{t-r}^{\alpha}\frac{2^{1-\alpha}}{\delta^{1-\alpha}}  \nonumber \\
	&\leq 2\bar{d}(C L_1 +\bar{d} L\|B\|_{\alpha}+\bar{d}^3L^2\|A\|_{2\alpha})\cdot \mathcal{P}(L_1,\|B\|_{\alpha},\|A\|_{2\alpha}) \cdot \abs{t-r}^{\alpha} 
	\end{align}
	where the last line follows from $\mathcal{P}(x) > 1$ for $x>1$. The second to last line follows from $\Delta t_n > \frac{\delta}{2}$. We now  combine \eqref{haha1} and \eqref{haha2} and let
	\begin{align} \mathcal{P}'(L_1,\|B\|_{\alpha},\|A\|_{2\alpha}) \triangleq 2\bar{d}(C L_1 +\bar{d} L\|B\|_{\alpha}+\bar{d}^3L^2\|A\|_{2\alpha})\cdot \mathcal{P}(L_1,\|B\|_{\alpha},\|A\|_{2\alpha})\cdot \frac{2}{\delta}C_1(\delta)
	\end{align}
	be the polynomial where 
	\begin{equation}
	\|X_n(t)-X_n(r)\|_{\infty}\leq \mathcal{P}'(L_1,\|B\|_{\alpha},\|\widetilde{A}\|_{2\alpha}) \abs{t-r}^{\alpha}
	\end{equation}
	for all $n$. This completes the proof.
\end{proof}
\begin{proof}[Proof of Lemma~\ref{note lemma4}]
	The discretization $\hat{X}^n(\cdot)$ from Equation $(2.4)$ on page $280$ of \cite{blanchet2017}  is defiend as:
	\begin{align}\label{epdiscrete}
	\hat{X}_i^{n}(t_{k+1}^n)  
	=&\hat{X}_i^{n}(t_{k}^n)+\mu_i(\hat{X}^n(t_{k}^n))\Delta t_n +\sum_{j=1}^{d^{\prime}} \sigma_{ij}(\hat{X}^n(t_{k}^n))\Delta B_{j,k}^n\nonumber\\
	&+ \sum_{j=1}^{d^{\prime}}\sum_{l=1}^d\sum_{m=1}^{d^{\prime}} \frac{\partial \sigma_{ij}}{\partial x_l}(\hat{X}^{n}(t_{k}^n))\sigma_{lm}(\hat{X}^{n}(t_{k}^n))\hat{A}_{mj}(t_{k}^n,t_{k+1}^n)
	\end{align}
	where  $\hat{A}_{i,j}(s,t)=0$ for $ i\neq j$ and $\hat{A}_{i,i}(s,t)={A}_{i,i}(s,t)$ $ \forall 1\leq i \leq d$ as in Definition \ref{jose0}. Consequently, it is defined on   page $280$ of \cite{blanchet2017}, as in Definition~\ref{jose0}, that
	\begin{equation}
	{R}^n_{i,j}(t_l^n,t_m^n)\triangleq \sum_{k=l+1}^m \{A_{i,j}(t_{k-1}^n,t_k^n)-\hat{A}_{i,j}(t_{k-1}^n,t_k^n)\}  \text{ and }  \Gamma_{R} \triangleq \sup_{n}\sup_{\substack{0\leq s\leq t \leq 1 \\ s,t \in D_n}} \max_{1\leq i,j\leq {d^{\prime}}} \frac{\abs{{R}_{i,j}^n(s,t)}}{\abs{t-s}^{\beta}\Delta t_n^{2\alpha-\beta}}.
	\end{equation}
	
	With a slight change in notation, we replace $M$ with $L_1(\omega)$, $\|Z\|_{\alpha}$ with $\|B\|_{\alpha}$,  then according to \cite[Theorem ~2.1]{blanchet2017}, we can find constant $G$ (for notation consistency with \cite{blanchet2017}) explicitly in terms of $L_1,K_{\alpha},K_{2\alpha}$ and $K_{R}$ such that 
	\begin{equation}
	\|\hat{X}^n(t)-{X}_t\|_{\infty} \leq G\Delta t_n^{2\alpha-\beta}
	\end{equation}
	where we may take $K_{\alpha}=\|B\|_{\alpha},K_{2\alpha}=\|A\|_{2\alpha}$ and $K_{R}=\Gamma_{R}+1$. 
	
	To prove a similar result for $\|{X}_n^{\mu}(t)-{X}_t\|_{\infty}$ instead of  $\|\hat{X}^n(t)-{X}_t\|_{\infty}$, we  replace $\Gamma_{R}$ with our $\Gamma_{\widetilde{R}}$ defined in Definition \ref{jose0}, the proof will follow exactly as in the proof of Theorem $2.1$ in \cite{blanchet2017}[Proposition 6.1 and 6.2]. Particularly, we are able to compute constant $G$ in terms of $L_1(\omega), \|B\|_{\alpha},\|A\|_{2\alpha}$ and $\Gamma_{\widetilde{R}}$ such that 
	\begin{equation}
	\|X_n^{\mu}(t)-X_t\|_{\infty} \leq G\Delta t_n^{2\alpha-\beta}.
	\end{equation}
	Moreover, following Section $2.2$ on pages $282-283$ of \cite{blanchet2017} (part of which is shown in Lemma \eqref{note lemma3}), the construction of the constant $G$ only involves multiplication and addition among the variables $L_1$,$\|B\|_{\alpha}$,$\|A\|_{2\alpha}$, $\Gamma_{\widetilde{R}}$ and constants. This suggests that we can find some fixed polynomial  $\mathcal{P}''(\cdot)$ such that $\mathcal{P}''(x) > 1$ for $x>1$ and 
	\begin{equation}
	\|X_n^{\mu}(t)-X_t\|_{\infty} \leq \mathcal{P}''(L_1,\|B\|_{\alpha},\|A\|_{2\alpha},\Gamma_{\widetilde{R}})\Delta t_n^{2\alpha-\beta}.
	\end{equation}
\end{proof}

\begin{remark}
	A technical detail here is that the construction of the constant $G$ in
	Section $2.2$ of \cite{blanchet2017} for Theorem $2.1$ actually only makes
	the statement of Theorem $2.1$ valid for $n$ \textquotedblleft large"
	enough (see the proof of Theorem $2.1$ in Section $6$ of \cite{blanchet2017}
	[Proposition~6.1 and 6.2]). However, we may extend the result to hold
	for all $n$ using the similar method in our \eqref{haha2} of Lemma~\ref{note
		lemma3}. There we modified the proof to extend the result originally only
	valid for $n$ \textquotedblleft large" enough, meaning $\Delta t_{n}\leq 
	\frac{\delta }{2}$ for $\delta =(\mathcal{P}_{1}(L_{1},\Vert
		B\Vert _{\alpha },\Vert \widetilde{A}\Vert _{2\alpha }))^{-1}$, to all $n$ while
	still maintaining the bound  $\mathcal{P}'(\cdot )$ to be some polynomial
	of $L_{1}$,$\Vert B\Vert _{\alpha }$ and $\Vert \tilde{A}\Vert
	_{2\alpha }$. The situation is similar here, and thus by a similar but more lengthy argument, we can extend the result of Lemma \eqref{note
		lemma4} to hold for all $n$ while still making the upper bound of $G$ above,
	namely $\mathcal{P}''(\cdot )$, to be a polynomial of $L_{1}$,$%
	\Vert B\Vert _{\alpha }$,$\Vert A\Vert _{2\alpha }$.
\end{remark}

\begin{proof}[Proof of Lemma~\ref{lcyes}]
	Denote  $X(t;\mu,B), 0\leq t \leq 1$ to be the solution of SDE under field $\mu(\cdot)\in\mathcal{L}_1$ and Brownian motion $B(t), 0 \leq t \leq 1$. Let $X_n(t; \mu^{(n)},B)$ be our antithetic scheme under the field $\mu^{(n+1)}\in\mathcal{L}_1$ instead of $\mu^{(n)}$.  Since the Brownian increments $\Delta B_k, 1\leq k\leq 2^n$ are the same for $B(\cdot)$ and $B^{(n+1),a}(\cdot)$ by \eqref{trihere}, we have that $X_n(1;\mu^{(n+1)},B)=X_n(1;\mu^{(n+1)},B^{n+1,a})$ and thus,
	\begin{align}\label{4_result}
	&\|X_{n+1}(1)-X_{n+1}^a(1)\|_{\infty}\nonumber \\
	\leq& \|X_{n+1}(1)-X_n(1;\mu^{(n+1)},B)\|_{\infty} + \|X_{n+1}^a(1)-X_n(1;\mu^{(n+1)},B^{n+1,a})\|_{\infty} \nonumber\\
	\leq & \|X_{n+1}(1)-X(1;\mu^{(n+1)},B)\|_{\infty}+\|X_n(1;\mu^{(n+1)},B)-X(1;\mu^{(n+1)},B)\|_{\infty} \nonumber \\
	&+\|X_{n+1}^a(1)-X(1;\mu^{(n+1)},B^{n+1,a})\|_{\infty}+\|X_n(1;\mu^{(n+1)},B^{n+1,a})-X(1;\mu^{(n+1)},B^{n+1,a})\|_{\infty} \nonumber \\
	\leq& 2\Big(\mathcal{P}''(L_1,\|B\|_{\alpha}, \|A\|_{2\alpha},\Gamma_{\widetilde{R}})+\mathcal{P}''(L_1,\|B^{n+1,a}\|_{\alpha}, \|A^{n+1,a}\|_{2\alpha},\Gamma_{\widetilde{R}^{n+1,a}})\Big)\Delta t_n^{2\alpha-\beta}\notag\\
	\end{align}
	The last line follows from \lemref{note lemma4} where quantity $\|B^{n+1,a}\|_{\alpha}, \|A^{n+1,a}\|_{2\alpha},\Gamma_{\widetilde{R}^{n+1,a}}$ is defined for $B^{n+1,a}(\cdot)$ as for $B(\cdot)$ in Definition \ref{alni}. Now, raising inequality \eqref{4_result} to the eighth power and using \lemref{jose1}, we can find polynomial $\mathcal{P}_4(x)>1$  for $x>1$ such that, for all $n$,
	\begin{equation}
	\EEb[\|X_{n+1}(1)-X_{n+1}^a(1)\|_{\infty}^8] \leq \mathcal{P}(L_1)\Delta t_n^{8(2\alpha-\beta)}.
	\end{equation}
\end{proof}

\begin{proof}[Proof of Lemma~\ref{mgt Gaussian}]
	By the Gaussian tail bound  $\int_\xi^{\infty}e^{-\frac{t^2}{2}}dt \leq \frac{1}{\xi}e^{-\frac{\xi^2}{2}}$ for all  $\xi > 0$,
	\begin{align*}
	\mathbb{\mathbb{P}}(M_{\epsilon} > {b})=1-\prod_{n=1}^{\infty}\mathbb{\mathbb{P}}(|{Z_n}|\leq {b}n^{\epsilon})\leq 1-\prod_{n=1}^{\infty}(1-\frac{2}{\sqrt{2\pi}\cdot {b}n^{\epsilon}}{e^{-\frac{{b}^2 n^{2\epsilon}}{2}}}).
	\end{align*}
	Thus, we have $\mathbb E[e^{tM_{\epsilon}}]=\int_0^\infty \mathbb{\mathbb{P}}(e^{tM_{\epsilon}}>{b})db$ which is bounded by
	\begin{align*}
	\int_{0}^\infty \mathbb{\mathbb{P}}(M_{\epsilon}>\frac{\log({b})}{t})d{b} 
	\leq &\int_{{b}_t}^{\infty} (1- \prod_{n=1}^{\infty}(1-\frac{2t}{\sqrt{2\pi}n^{\epsilon}\cdot \log({b})}{e^{-\frac{{\log({b})}^2 n^{2\epsilon}}{2t^2}}}))d{b}< \infty
	\end{align*}
	according to calculation.
\end{proof}


\begin{thebibliography}{99}
	\footnotesize
	
	\bibitem{adler2003random}{\sc Adler, R.J. and Taylor, J. E.} (2009) \emph {Random Fields and Their Geometry.} Springer Science \& Business Media.
	
	
	\bibitem{bayer2016rough} {\sc Bayer. C., Friz. P., Riedel. S. and Schoenmakers. J.} (2016) From rough path estimates to multilevel Monte Carlo. \emph{SIAM Journal on Numerical Analysis}. {\bf 54(3)}: 1449--1483.
	
	
	\bibitem{blanchet2017} {\sc Blanchet, J., Chen, X. and Dong, J.} (2017) $\epsilon$-Strong simulation for multidimensional stochastic differential equations via rough path analysis.\emph{ Annals of Applied Probability}. {\bf 27(1)}:275--336.
	
	\bibitem {blanchet2015unbiased}{\sc Blanchet, J. and Glynn, P.} (2015). Unbiased {Monte Carlo} for optimization and functions of expectations via multi-level randomization. In \emph {Winter Simulation Conference 2015 (WSC)}, IEEE, pp. 3656--3667
	
	
	\bibitem {burkholder1972} {\sc Burkholder, D.L.} (1988) {Sharp inequalities for martingales and stochastic integrals}. In \emph{Ast{\'e}risque}. {\bf 157(158)}: 75-94.
	
	
	\bibitem {charrier2013finite} {\sc Charrier, J. , Scheichl, R. and Teckentrup, A. L.} (2013) {Finite element error analysis of elliptic PDEs with random coefficients and its application to multilevel Monte Carlo methods}. In \emph{SIAM Journal on Numerical Analysis}. {\bf 51(1)}:322--352
	
	
	\bibitem{cliffe2011multilevel}{\sc K.A. Cliffe, K.A., Giles, M.B., Scheichl, R and Teckentrup, A.L.} (2011) Multilevel {Monte Carlo} methods and applications to elliptic {PDE}s with random coefficients. In \emph{Computing and Visualization in Science}. {\bf 14(1)}:3.
	
	
	
	\bibitem {crevillen2017multilevel} {\sc Crevill{\'e}n-Garc{\'\i}a, D. and Power, H.} (2017) {Multilevel and quasi-{Monte Carlo} methods for uncertainty quantification in particle travel times through random heterogeneous porous media}. In \emph{Royal Society Open Science}. {\bf 4(8)}: 170203.
	
	\bibitem {davie2008differential} {\sc Davie, A.M. } (2008) {Differential equations driven by rough paths: an approach via discrete approximation}. In \emph {Applied Mathematics Research eXpress}, Oxford University Press, {\bf 2008}
	
	
	
	\bibitem {duffie2010dynamic} {\sc Duffie, D.} (2010) \emph{Dynamic Asset Pricing Theory}, Princeton University Press.
	
	
	\bibitem {friz2014course} {\sc Friz, P. and Hairer, M.} (2014) \emph{A Course on Rough Paths: With An Introduction to Regularity Structures}, Springer.
	
	
	
	\bibitem {friz2010multidimensional} {\sc  Friz, P. and Victoir, N.} (2010) \emph {Multidimensional Stochastic Processes as Rough Paths: Theory and Applications}, Cambridge University Press.
	
	\bibitem{giles2008multilevel}
	{\sc Giles, M.} (2008) Multilevel {Monte Carlo} path simulation. {\em Operations Research}, {\bf 56 (3)}:607--617.
	
	
	\bibitem {giles2013multilevel}{\sc Giles, M. }(2013) Multilevel {Monte Carlo} methods. In \emph{{Monte Carlo} and Quasi-{Monte Carlo} Methods 2012}, Springer, pp: 83--103.
	
	
	\bibitem {giles2015multilevel}{\sc Giles, M. and Bernal, F.} (2018) {Multilevel estimation of expected exit times and other functionals of stopped diffusions}. In \emph{SIAM/ASA Journal on Uncertainty Quantification}. {\bf {6(4)}}:1454--1474.
	
	\bibitem {giles2014}{\sc Giles, M. and Szpruch, L.} (2014) {Antithetic multilevel {M}onte {C}arlo estimation for multi-dimensional {SDE}s without {L\'evy} area simulation}.\emph{Annals of Applied Probability}. {\bf 24(4)}:1585--1620.
	
	
	\bibitem {hairer2014theory} {\sc Hairer, M.} (2014) {A theory of regularity structures}.\emph {Inventiones mathematicae}. {\bf 198(2)}:269--504.
	
	\bibitem {hofmann1999lp} {\sc  Hofmann, M.} (1999) {${L}_p$ estimation of the diffusion coefficient}. In \emph {Bernoulli}. {\bf 5(3)}: 447--481.
	
	
	\bibitem {howard1998gronwall} {\sc Howard, R.} (1998) \emph {The {G}r$\ddot{o}$nwall Inequality}. Lecture Notes, University of South Carolina.
	
	
	\bibitem{karatzas2012brownian} {\sc Karatzas, I. and Shreve, S.} (2012) \emph {Brownian Motion and Stochastic Calculus}, Springer Science \& Business Media, volume {\bf 113}..
	
	\bibitem {kloeden2011numerical} {\sc Kloeden, P.E. and Platen, E.} (2011) \emph{Numerical Solution of Stochastic Differential Equations}, Springer Berlin Heidelberg.
	
	\bibitem {li2016multilevel}{\sc Li, X. ,  Liu, J. and Xu, S.} (2016) {A multilevel approach towards unbiased sampling of random elliptic partial differential equations}. In \emph{Advances in Applied Probability}. {\bf 50(4)}:1007--1031.
	
	
	\bibitem {lyons1998differential} {\sc Lyons, T.} (1998) {Differential equations driven by rough signals}. In \emph {Revista Matem{\'a}tica Iberoamericana}. {\bf 14(2)}: 215--310.
	
	
	\bibitem {lyons2004cubature} {\sc Lyons, T. and Victoir, N.} (2004) Cubature on {Wiener} space. In \emph{Proceedings of the Royal Society of London A: Mathematical, Physical and Engineering Sciences}. The Royal Society, volume 460, pp: 169--198.
	
	\bibitem {de2005dealing} {\sc Marsily, G.D., Delay, F., Goncalves, J., Renard, P., Vanessa, T. and Violette, S.} (2005) {Dealing with spatial heterogeneity}. In \emph {Hydrogeology Journal}. {\bf 13(1)}:161--183.
	
	\bibitem {mishra2012multi} {\sc Mishra, S., Schwab, Ch and {\v{S}}ukys, J.} (2012) Multi-level {Monte Carlo} finite volume methods for nonlinear systems of conservation laws in multi-dimensions. In \emph {Journal of Computational Physics}. {\bf 231 (8)}: 3365--3388.
	
	
	\bibitem {ostoja2007microstructural} {\sc Ostoja-Starzewski, M.} (2007) \emph {Microstructural Randomness and Scaling in Mechanics of Materials}, CRC Press.
	
	\bibitem{pastorello1996diffusion}{\sc Pastorello, S. }(1996) {Diffusion coefficient estimation and asset pricing when risk premia and sensitivities are time varying}. In \emph{Mathematical Finance}. {\bf 6(1)}:111--117.
	
	\bibitem {rhee2015} {\sc Rhee, CH. and Glynn, P.} (2015) {Unbiased estimation with square root convergence for {SDE} models}. In \emph{Operations Research}. {\bf 63(5)}: 1026--1043.
	
	
	\bibitem {sobczyk2012stochastic} {\sc Sobczyk, K. and Kirkner, D.} (2012) \emph{Stochastic Modeling of Microstructures}, Springer Science \& Business Media.
	
	\bibitem{steele2012stochastic} {\sc Steele, J.M.} (2012) \emph{Stochastic Calculus and Financial Applications}, Springer Science \& Business Media, volume 45. 
	
	\bibitem{teckentrup2015multilevel}{\sc Teckentrup, A.L. , Jantsch, P. , Webster, C.G. and Gunzburger, M.} (2015) {A multilevel stochastic collocation method for partial differential equations with random input data}. In \emph{SIAM/ASA Journal on Uncertainty Quantification}. {\bf 3(1)}:1046--1074.
	
	\bibitem{whitaker1986flow} {\sc Whitaker, S.} (1986) {Flow in porous media {I}: A theoretical derivation of {D}arcy's law}. In \emph{Transport in Porous Media}. {\bf 1(1)}:3--25.
	
	
	
	
	
	
	
\end{thebibliography}
\end{document}